\def \O{\mathcal{O}}
\def\leftB{[\![}
\def\rightB{]\!]}
\newcommand \A[1]{{\bf (#1)}}
\def\O{{\cal{O}}}
\def\F{{\cal F}}
\def\P{{\mathbb{P}}  }
\def\det{{\rm{det}}}
\def\tr{{\rm{tr}}}
\def\bint#1^#2{\displaystyle{\int_{#1}^{#2}}}
\def\bsum#1^#2{\displaystyle{\sum_{#1}^{#2}}}
\def\xdt_#1{X_#1(\Delta t)}
\newtheorem{THM}{Theorem}[section]
\newtheorem{PROP}{Proposition}[section]
\newtheorem{COROL}{Corollary}[section]
\newtheorem{LEMME}{Lemma}[section]
\newtheorem{REM}{Remark}[section]
\def \R{\mathbb{R}}
\def \N{\mathbb{N}}
\def \E{\mathbb{E}}
\def \argmin{\arg~\min}
\newtheorem{thm}{Theorem}[section]
\newtheorem{lem}[thm]{Lemma}
\DeclarePairedDelimiter \ceil{\lceil}{\rceil}
\begin{document}
\title{A Multi-step Richardson-Romberg extrapolation method for stochastic approximation}
\author{N. Frikha}\address{LPMA, Universit\'e Paris Diderot, 5 rue Thomas Mann 75013 Paris, frikha@math.univ-paris-diderot.fr}
\author{L. Huang}\address{LPMA, Universit\'e Paris Diderot, 5 rue Thomas Mann 75013 Paris, huang@math.univ-paris-diderot.fr}
\date{\today}
\begin{abstract} We obtain an expansion of the implicit weak discretization error for the target of stochastic approximation algorithms introduced and studied in \cite{Frikha2013}. This allows us to extend and develop the Richardson-Romberg extrapolation method for Monte Carlo linear estimator (introduced in \cite{tala:tuba:90} and deeply studied in \cite{GPag:07}) to the framework of stochastic optimization by means of stochastic approximation algorithm. We notably apply the method to the estimation of the quantile of diffusion processes. Numerical results confirm the theoretical analysis and show a significant reduction in the initial computational cost.
\end{abstract}
\subjclass{60H35,65C30,65C05}
\keywords{Euler scheme, weak error, Richardson-Romberg extrapolation, stochastic approximation algorithm}
\maketitle

\section{Statement of the Problem}
\label{stat:problem:sec}
The aim of this paper is to combine a multistep Richardson-Romberg extrapolation method with stochastic approximation (SA) algorithms which are recursive simulation based procedures commonly used in the framework of stochastic optimization. Introduced by Robbins and Monro \cite{Robbins1951}, SA algorithms aims at computing a zero of a continuous function $h: \R^d \rightarrow \R^d$ which is unknown to the experimenter but can only be estimated through experiments. In this general context, the function $h$ writes $h(\theta):= \E[H(\theta,U)]$ where $H: \R^d \times \R^q\rightarrow \R^d$ and $U$ is a $\R^q$-valued random vector. To estimate a zero of $h$, one devises the following recursive algorithm
\begin{equation}
\label{RM:proc}
\theta_{p+1} = \theta_p - \gamma_{p+1}H(\theta_p,U^{p+1}), \ p\geq0
\end{equation}

\noindent where $(U^{p})_{p\geq1}$ is an $i.i.d.$ sequence of random variables with the same law as $U$ defined on a probability space $(\Omega,\mathcal{F}, \P)$, $\theta_0$ is independent of the innovation of the algorithm with $\E[|\theta_0|^2]< + \infty$ and $\gamma=(\gamma_p)_{p\geq1}$ is a deterministic and decreasing sequence of non-negative steps satisfying the usual assumption
\begin{equation}
\label{step:rm:assump}
\sum_{p\geq1} \gamma_p = + \infty, \ \ \mbox{ and } \ \ \sum_{p\geq1} \gamma^{2}_p < + \infty.
\end{equation}
 
 When the function $h$ is the gradient of a convex potential, the recursive procedure \eqref{RM:proc} is a stochastic gradient algorithm. Indeed replacing $H(\theta_p, U^{p+1})$ by $h(\theta_p)$ in \eqref{RM:proc} leads to the usual deterministic descent gradient procedure. 
 
 In many applications, notably in computational finance, the sequence of random vectors $(U^{p})_{p\geq1}$ is not directly simulatable (at a reasonable cost) and can only be approximated by another sequence of easily simulatable random vectors $((U^{n})^{p})_{p\geq1}$, $n>0$, where $U^{n}$ (weakly or strongly) approximates $U$ as $n\rightarrow +\infty$ with a standard weak discretization error (or bias) $\E[f(U^{n})]-\E[f(U)]$ that can be expanded in powers of $n^{-\alpha}$, $\alpha>0$, for a specific class of functions $f \in \mathcal{C}$. One typical situation is when $U=X_T$, $X:=(X_t)_{t\in [0,T]}$ being a $q$-dimensional diffusion process solution of a stochastic differential equation (SDE) and $U^{n} = X^{n}_T$ where $X^{n}:=(X^{n}_t)_{t\in [0,T]}$ stands for its standard Euler-Maruyama discretization scheme with time step $\Delta = T/n$, $n\in \N^{*}$.
 
%

 Since we are interested in the computation of the zero $\theta^*$ of $h$ given by $h(\theta):=\E[H(\theta,U)]$ where $H: \R^d \times \R^q\rightarrow \R^d$ and the function $h$ is generally neither known nor computable since the random variable $U$ cannot be easily simulated, estimating $\theta^*$ by devising directly the recursive scheme \eqref{RM:proc} is not possible. Therefore, two steps are needed to compute $\theta^*$:
\begin{trivlist}
\item[-] the first step consists in approximating the zero $\theta^*$ of $h$ by the zero $\theta^{*,n}$ of the function $h^{n}$ defined by $h^{n}(\theta):=\E[H(\theta, U^{n})]$, $\theta \in \R^d$. It induces \emph{an implicit discretization error} which writes 
$$
\mathcal{E}_{D}(n):= \theta^{*}-\theta^{*,n}.
$$ 
%
 
\noindent Under mild assumptions on $h$ and $h^{n}$, it is proved in \cite{Frikha2013} that $\theta^{*,n}$ converges to $\theta^*$ as $n$ goes to infinity. Moreover, if the \emph{standard weak discretization error} is of order $n^{-\alpha}$, $\alpha \in (0,1)$, that is $\forall \theta \in \R^d, \ h^{n}(\theta)-h(\theta) = \Lambda^{0}_1(\theta) n^{-\alpha} + o(n^{-\alpha})$, with $\Lambda^{0}_1:\R^d \rightarrow \R^d$, then (under additional mild assumptions) this rate of convergence transfers to the \emph{implicit discretization error} that is $\mathcal{E}_{D}(n) = \Theta_1n^{-\alpha} + o(n^{-\alpha})$ for some $\Theta_1 \in \R^d$.


\item[-] the second step consists in approximating $\theta^{*,n}$ using $M\in \N^{*}$ steps of the following SA scheme
\begin{equation}
\label{RM}
\theta^{n}_{p+1} = \theta^{n}_{p} - \gamma_{p+1} H(\theta^{n}_{p}, (U^{n})^{p+1}), \ p \in \leftB 0, M-1\rightB,
\end{equation}

\noindent where $((U^{n})^{p})_{p\in \leftB1,M\rightB}$ is an i.i.d. sequence of random variables with the same law as $U^{n}$, $\theta^{n}_0$ is independent of the innovation of the algorithm with $\sup_{n\geq1}\E|\theta^{n}_0|^2<+\infty$ and $\gamma=(\gamma_{p})_{p \geq 1}$ is a sequence of non-negative deterministic and decreasing steps satisfying \eqref{step:rm:assump}.
This induces a \emph{statistical error} which writes 
$$
\mathcal{E}_S(n, M):= \theta^{*,n} - \theta^{n}_{M}.
$$

Regarding the \emph{statistical error}, it is well-known that under mild assumptions the Robbins-Monro theorem guarantees that for each $n\in \N^{*}$, $\lim_{M\rightarrow + \infty}\mathcal{E}_S(n, M) = 0$. Moreover, under additional technical assumptions, a central limit theorem (CLT) holds at rate $\gamma^{-1/2}(M)$ that is $\gamma^{-1/2}(M)\mathcal{E}_S(n, M) $ converges in distribution to a normally distributed random variable. The reader may also refer to \cite{fri:men:12} and \cite{fat:fri:13} for some recent developments on non-asymptotic deviation bounds for the statistical error.
\end{trivlist}

The global error between $\theta^{*}$, the quantity to estimate, and its implementable approximation $\theta^{n}_M$ can be decomposed as follows:
\begin{align*}
\mathcal{E}_{glob}(n, M) & = \theta^{*} - \theta^{*,n} + \theta^{*,n}- \theta^{n}_M\\
& := \mathcal{E}_{D}(n) + \mathcal{E}_S(n, M).
\end{align*} 


The first aim of this paper is to prove the existence of an expansion for the \emph{implicit discretization error}, that is, under mild assumptions (see Section \ref{main:result:sec}) on $h$ and $h^{n}$, $\mathcal{E}_{D}(n)$ can be expanded as follows  
\begin{equation}
\label{impdisc:exp}
\forall R \in \N^{*}, \ \ \theta^{*,n} - \theta^* = \frac{C_1}{n^{\alpha}} + \cdots + \frac{C_R}{n^{\alpha R}} + o\left(\frac{1}{n^{\alpha R}}\right)
\end{equation}

\noindent where $(C_1, \cdots, C_R) \in (\R^d)^R$. Then taking advantage of \eqref{impdisc:exp} we devise a multistep Richardson-Romberg extrapolation method for stochastic optimization by means of stochastic approximation algorithm. The principle of Richardson-Romberg extrapolation is to reduce the bias produced by the \emph{implicit discretization error} by combining two estimators with different step size. To be more precise, one considers the two following weights $w_1 = (-1/(2^{\alpha}-1)) I_d$ and $w_2 = (2^{\alpha}/(2^{\alpha}-1)) I_d$, $I_d$ is the identity matrix of dimension $d$ and the Richardson-Romberg SA estimator
$$
\Theta^{n, 2n}_{M} = w_1 \theta^{n}_{M} + w_2 \theta^{2n}_{M} 
$$

\noindent where $(\theta^{2n}_M,\theta^{n}_M)$ is obtained using $M$ steps of two SA schemes devised with the i.i.d. sequence $((U^{2n},U^{n})^{p})_{p \in \leftB 1, M\rightB}$ of random variables with the same law as $(U^{2n}, U^{n})$. Under standard assumptions, this linear combination of SA estimators $a.s.$ converges to the target $w_1 \theta^{*,n} + w_2 \theta^{*,2n}$ as the number of steps $M$ goes to infinity. The key observation is that this new target satisfies the following implicit error expansion of order 2
$$
 w_1 \theta^{*,n} + w_2 \theta^{*,2n}  - \theta^* = -\frac{C_2}{2^{\alpha}} \frac{1}{n^{2\alpha}} + o\left( \frac{1}{n^{2\alpha}} \right).
$$

Moreover, in the spirit of \cite{GPag:07}, we show how to control the asymptotic $L^{1}(\P)$-norm of the distance between the new estimator $\Theta^{n, 2n}_{M}$ and its target $w_1\theta^{*,n} +w_2 \theta^{*,2n}$ as $n$ goes to infinity. Then, it is natural to iterate this extrapolation to obtain a new SA estimator with an implicit discretization error of order $n^{-\alpha R}$ for any $R\in \N^{*}$. This extension called multi-step Richardson-Romberg extrapolation is deeply investigated in \cite{GPag:07} for Monte Carlo linear estimator in the framework of discretization of diffusion processes.

The aim of this paper is to investigate the Richardson-Romberg SA method. Our purpose is to show that the principle of multi-step Richardson-Romberg extrapolation for Monte Carlo linear estimator can be extended to the framework of stochastic optimization by means of SA algorithm. We notably prove that the new estimator outperforms the standard SA estimator in terms of computational cost.

The paper is organized as follows: in Section 2 we provide an expansion of the implicit discretization error in powers of $n^{-\alpha}$ under mild assumptions. Then we take advantage of this expansion to propose a multi-step Richardson-Romberg method by means of SA. In Section 3 is presented an illustration of the method to the estimation of the quantile of a stochastic differential equation (SDE) driven by a stable process. In Section 4 numerical results are carried out to confirm the theoretical analysis. Finally, Section 5 is devoted to theoretical results which are useful throughout the paper.

\section{Main results}
\label{main:result:sec}
This section is divided in two parts. In the first one we obtain a general result concerning the expansion of the implicit discretization error. In the second one, we take advantage of this result to develop a Richardson-Romberg extrapolation method for stochastic optimization by means of SA algorithms.

\subsection{Expansion of the implicit discretization error}

We first provide a result concerning the convergence of the sequence $(\theta^{*,n})_{n\geq1}$ towards $\theta^*$. For a proof the reader may refer to \cite{Frikha2013}.
\begin{PROP}
\label{prop:conv:disc}
For all $n \in \N^*$, assume that $h$ and $h^n$ satisfy the mean reverting assumption:
\begin{equation*}
\forall \theta \neq \theta^*, \ \langle \theta-\theta^*, h(\theta)\rangle >0 \ \ \mbox{and } \ \forall \theta \neq \theta^{*,n}, \  \langle \theta-\theta^{*,n}, h^{n}(\theta)\rangle >0.
\end{equation*}

 Moreover, suppose that $(h^{n})_{n\geq1}$ converges locally uniformly towards $h$. Then, one has
$$
\theta^{*,n} \rightarrow \theta^* \ \ \mbox{as} \ \ n\rightarrow + \infty.
$$
\end{PROP}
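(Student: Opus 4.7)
The plan is to decompose the proof into two steps: first establish that the sequence $(\theta^{*,n})_{n\geq 1}$ is eventually trapped in a ball around $\theta^*$, and then identify every subsequential limit as $\theta^*$ by passing to the limit in $h^n(\theta^{*,n})=0$.

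For the boundedness step, I would fix any $R>0$ and use that $\theta\mapsto\langle \theta-\theta^*, h(\theta)\rangle$ is continuous and strictly positive on the compact sphere $S_R=\{|\theta-\theta^*|=R\}$, so
$$\alpha_R := \inf_{\theta\in S_R}\langle \theta-\theta^*, h(\theta)\rangle > 0.$$
Local uniform convergence of $h^n$ towards $h$ on the closed ball $\bar B(\theta^*,R)$ then yields some $n_0=n_0(R)$ such that $\langle \theta-\theta^*, h^n(\theta)\rangle \geq \alpha_R/2$ on $S_R$ for every $n\geq n_0$. Now suppose $\theta^{*,n}$ lies outside $\bar B(\theta^*,R)$; the segment from $\theta^*$ to $\theta^{*,n}$ crosses $S_R$ at $\theta = \theta^* + t(\theta^{*,n}-\theta^*)$ with $t=R/|\theta^{*,n}-\theta^*|\in(0,1)$, and a direct calculation gives $\theta - \theta^{*,n} = -\tfrac{1-t}{t}(\theta-\theta^*)$. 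Plugging this into the mean-reverting assumption for $h^n$ (applicable since $\theta\neq\theta^{*,n}$) produces $\langle\theta-\theta^*,h^n(\theta)\rangle<0$, contradicting the lower bound $\alpha_R/2$. Hence $\theta^{*,n}\in\bar B(\theta^*,R)$ for all $n\geq n_0$.

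For the convergence step, the boundedness of $(\theta^{*,n})_{n\geq1}$ allows extraction of any convergent subsequence $\theta^{*,n_k}\to\bar\theta$. Since $\theta^{*,n_k}$ and $\bar\theta$ stay in a common compact set $K$, local uniform convergence combined with continuity of $h$ yields
$$|h(\bar\theta)| \leq |h(\bar\theta)-h(\theta^{*,n_k})| + \sup_{\theta\in K}|h(\theta)-h^{n_k}(\theta)| + |h^{n_k}(\theta^{*,n_k})| \longrightarrow 0,$$
using $h^{n_k}(\theta^{*,n_k})=0$. Thus $h(\bar\theta)=0$, and the mean-reverting property of $h$ forces $\bar\theta=\theta^*$ as it pins down $\theta^*$ as the unique zero. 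Since every convergent subsequence has limit $\theta^*$ and the whole sequence is bounded, $\theta^{*,n}\to\theta^*$.

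The only delicate part is the boundedness argument: mean reversion of $h^n$ by itself gives no uniform control on where $\theta^{*,n}$ sits, and local uniform convergence a priori provides no information outside compact sets. The geometric trick of intersecting $S_R$ with the segment $[\theta^*,\theta^{*,n}]$ is what couples the two hypotheses and prevents $\theta^{*,n}$ from escaping to infinity; this is the step I expect to require the most care to write cleanly. The remainder is a standard tightness--plus--uniqueness argument.
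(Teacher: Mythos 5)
Your argument is correct; note that the paper itself provides no in-text proof of this proposition (it defers to \cite{Frikha2013}), so there is nothing to compare against directly. Two remarks on the proof as written. First, the ``boundedness step'' already proves the whole proposition: since $R>0$ is arbitrary, the conclusion $\theta^{*,n}\in\bar B(\theta^*,R)$ for all $n\geq n_0(R)$ \emph{is} the statement $\theta^{*,n}\to\theta^*$ (take $R=\epsilon$). The subsequent tightness-plus-uniqueness step is therefore redundant, though not incorrect, and you could safely drop it or keep it purely as a sanity check. Second, the inequality $\alpha_R>0$ silently uses continuity of $h$; this is tacitly in force throughout the paper (and also follows from local uniform convergence of the continuous $h^n$), but it is worth flagging as the one additional regularity hypothesis your argument invokes.

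The geometric trick itself is sound and is the genuine content. At the sphere crossing $\theta=\theta^*+t(\theta^{*,n}-\theta^*)$ with $t=R/|\theta^{*,n}-\theta^*|\in(0,1)$, you correctly obtain $\theta-\theta^{*,n}=-\tfrac{1-t}{t}(\theta-\theta^*)$ with a strictly negative scalar, so mean reversion of $h^n$ forces $\langle\theta-\theta^*,h^n(\theta)\rangle<0$, contradicting the lower bound $\alpha_R/2$ guaranteed on $S_R$ by local uniform convergence. You also implicitly use $h^{n_k}(\theta^{*,n_k})=0$, which is definitional in the paper's setup and unproblematic. Overall the proof is complete modulo the continuity remark, and it couples the two hypotheses exactly where it must.
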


 Here we will investigate an expansion of the error term $\theta^{*,n}-\theta^*$ in powers of $n^{-\alpha}$. Through the document, we will refer to \textbf{[H-k]} the following set of assumptions:
\begin{enumerate}
\item For all $\theta \in \R^d$,
\begin{equation}\label{erreur_discretisation}
h(\theta) - h^n(\theta) = \frac{\Lambda^0_1(\theta)}{n^{\alpha}} + \cdots +  \frac{\Lambda^0_k(\theta)}{n^{\alpha k}} + o\left( \frac{1}{n^{\alpha k}}\right) .
\end{equation}

\item $h,h^n\in \mathcal C^{k}(\R^d,\R^d)$ and for all $l \leq k-1$, for all $\theta \in \R^d$,

\begin{equation}\label{dev_diff_h} 
D^lh^n(\theta) - D^lh(\theta) = \frac{\Lambda^l_1(\theta)}{n^{\alpha}} + \cdots + \frac{\Lambda^l_{k-l}(\theta)}{n^{\alpha (k-l)}} + o\left( \frac{1}{n^{\alpha (k-l)}}\right)
\end{equation}

\noindent where for all $\theta\in \R^d$, $\Lambda^{l}_1(\theta), \cdots, \Lambda^{l}_{k-l}(\theta)$ and $o(n^{-\alpha(k-l)})$ are multilinear maps from $(\mathbb{R}^d)^{l}$ to $\mathbb{R}^d$.

\item For all $l \in \leftB 1, k \rightB$, $(D^{l} h^n)_{n\geq1}$ converges locally uniformly towards $D^{l}h$. 

\item $Dh(\theta^*)$ is invertible.
\end{enumerate}

\begin{PROP}\label{premier_pas}
Assume that $\theta^{*,n} \rightarrow \theta^*$ as $n\rightarrow + \infty$. Under \textbf{[H-1]}, one has
\begin{equation}\label{initialisation}
 n^{\alpha} \left( \theta^{*,n} - \theta^* \right) \underset{n \rightarrow \infty}{\longrightarrow} Dh(\theta^*)^{-1}\Lambda^0_1(\theta^*).
\end{equation}
\end{PROP}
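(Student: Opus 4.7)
The plan is to combine the defining identity $h^n(\theta^{*,n})=0$ with a mean-value-type expansion of $h^n$ around $\theta^*$ and the weak-error expansion \eqref{erreur_discretisation} evaluated at $\theta^*$.

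First, since item (2) of \textbf{[H-1]} gives $h^n\in \mathcal{C}^1(\R^d,\R^d)$, the fundamental theorem of calculus applied to $t\mapsto h^n\bigl(\theta^*+t(\theta^{*,n}-\theta^*)\bigr)$ on $[0,1]$ yields
\[
A_n\cdot(\theta^{*,n}-\theta^*) \;=\; h^n(\theta^{*,n})-h^n(\theta^*) \;=\; -h^n(\theta^*),\qquad A_n:=\int_0^1 Dh^n\bigl(\theta^*+t(\theta^{*,n}-\theta^*)\bigr)\,dt,
\]
using $h^n(\theta^{*,n})=0$. Next, evaluating \eqref{erreur_discretisation} (with $k=1$) at $\theta=\theta^*$ and invoking $h(\theta^*)=0$ gives $-h^n(\theta^*)=\Lambda^0_1(\theta^*)/n^{\alpha}+o(n^{-\alpha})$. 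Multiplying by $n^{\alpha}$ therefore leaves
\[
A_n\cdot n^{\alpha}(\theta^{*,n}-\theta^*) \;=\; \Lambda^0_1(\theta^*) + o(1).
\]

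To conclude it suffices to show that $A_n$ converges to $Dh(\theta^*)$ in operator norm. This follows from the hypothesis $\theta^{*,n}\to\theta^*$, the local uniform convergence $Dh^n\to Dh$ granted by item (3) of \textbf{[H-1]}, and the continuity of $Dh$: on any compact neighborhood of $\theta^*$ eventually containing the segment $\{\theta^*+t(\theta^{*,n}-\theta^*):t\in[0,1]\}$, the integrand converges uniformly to $Dh(\theta^*)$, whence $A_n\to Dh(\theta^*)$. Combined with the invertibility assumed in item (4), this forces $A_n$ to be invertible for $n$ large enough with $A_n^{-1}\to Dh(\theta^*)^{-1}$, so that
\[
n^{\alpha}(\theta^{*,n}-\theta^*) \;=\; A_n^{-1}\bigl(\Lambda^0_1(\theta^*)+o(1)\bigr)\;\longrightarrow\;Dh(\theta^*)^{-1}\Lambda^0_1(\theta^*).
\]

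The only subtlety to avoid is the temptation to write a bare Taylor expansion $h^n(\theta^{*,n})=h^n(\theta^*)+Dh^n(\theta^*)(\theta^{*,n}-\theta^*)+o(|\theta^{*,n}-\theta^*|)$: the $o(\cdot)$ remainder there depends on the function $h^n$ and is not controlled uniformly in $n$ by \textbf{[H-1]}. The integral-remainder formulation bypasses this by absorbing the whole linear piece into the single $n$-dependent matrix $A_n$, whose convergence is then a direct consequence of assumption (3). Note also that no continuity of $\Lambda^0_1$ is needed, since the expansion is applied only at the fixed point $\theta^*$.
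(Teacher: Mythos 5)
Your proof is correct and follows essentially the same route as the paper's: both exploit $h^n(\theta^{*,n})=0=h(\theta^*)$, rewrite the increment $h^n(\theta^{*,n})-h^n(\theta^*)$ via Taylor's formula with integral remainder (your $A_n$ is exactly the paper's $\int_0^1 Dh^n(t\theta^{*,n}+(1-t)\theta^*)\,dt$), substitute the weak-error expansion $h(\theta^*)-h^n(\theta^*)=\Lambda^0_1(\theta^*)n^{-\alpha}+o(n^{-\alpha})$, and pass to the limit using the local uniform convergence of $Dh^n$, $\theta^{*,n}\to\theta^*$, and the invertibility of $Dh(\theta^*)$. Your closing remark about the dangers of a bare Taylor expansion with an $n$-dependent remainder is a sound observation about why the integral form is the right one, but it does not alter the identity with the paper's argument.
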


\begin{proof}
Observe that one has $ h^n(\theta^{*,n}) - h^n(\theta^*) = - h^n(\theta^*)=h(\theta^{*}) - h^n(\theta^*)$. On the one hand, writing Taylor's formula with integral remainder yields:
\begin{equation}
\label{taylor:first:order}
h^n(\theta^{*,n}) - h^n(\theta^*) = \int_0^1 dt Dh^n(t\theta^{*,n}+ (1-t)\theta^*)(\theta^{*,n}- \theta^*).
\end{equation}
On the other hand, from the discretization error, we have $ h(\theta^{*}) - h^n(\theta^*) = \Lambda^0_1(\theta^*) n^{-\alpha} + o\left(n^{-\alpha}\right)$.

Since $\theta^{*,n}  \underset{n \rightarrow \infty}{\longrightarrow} \theta^*$, $Dh(\theta^*)$ is invertible, and $(Dh^n)_{n\geq1}$ converges uniformly locally to $Dh$, for $n$ large enough, the matrix $ \int_0^1Dh^n(t\theta^{*,n}+ (1-t)\theta^*)dt$ is invertible. Multiplying both sides of \eqref{taylor:first:order} by $n^{\alpha}$ finally yields
$$
n^{\alpha} \left(\theta^{*,n} - \theta^* \right)= \left(\int_0^1Dh^n(t\theta^{*,n}+ (1-t)\theta^*)dt \right)^{-1} \left( \Lambda^0_1(\theta^*)+ o(1)\right) \underset{n \rightarrow \infty}{\longrightarrow} Dh(\theta^*)^{-1}\Lambda^0_1(\theta^*).
$$
\end{proof}

Let us note that Proposition \ref{premier_pas} provides a first order expansion of $\theta^{*,n}-\theta^*$, that is $\theta^{*,n}-\theta^*= C_1 n^{-\alpha}+o(n^{-\alpha})$. We now give a generalization of this first result.
\begin{THM}\label{implic:discret:err:main_result}
Assume that $\theta^{*,n}\rightarrow \theta^*$, $n\rightarrow +\infty$, and that \textbf{[H-p]} holds for some $p\in \N^{*}$. Then, $\theta^{*,n}- \theta^*$ has an expansion up to order $p$, 
that is, the following expansion holds:
$$\theta^{*,n}- \theta^* = \frac{C_1}{n^{\alpha}} + \cdots + \frac{C_p}{n^{\alpha p}} + o\left( \frac{1}{n^{\alpha p}}\right).$$
\end{THM}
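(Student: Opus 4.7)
I would argue by induction on $p$. The base case $p=1$ is exactly Proposition \ref{premier_pas}. Suppose the expansion
$$\theta^{*,n} - \theta^* = \sum_{j=1}^{p-1} \frac{C_j}{n^{\alpha j}} + o(n^{-\alpha(p-1)})$$
holds under \textbf{[H-(p-1)]}, and set $\delta_n := \theta^{*,n} - \theta^*$. Since $h^n \in \mathcal{C}^p$ by \textbf{[H-p]}, Taylor--Young's formula at $\theta^*$ to order $p$ combined with $h^n(\theta^{*,n})=0$ gives
$$0 = h^n(\theta^*) + \sum_{l=1}^{p} \frac{1}{l!} D^l h^n(\theta^*)(\delta_n)^{\otimes l} + o(|\delta_n|^p),$$
and the induction hypothesis $|\delta_n|=O(n^{-\alpha})$ makes the Taylor remainder $o(n^{-\alpha p})$.

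\textbf{Rearrangement.} Since $h(\theta^*)=0$, assumption \eqref{erreur_discretisation} yields $h^n(\theta^*) = -\sum_{j=1}^{p}\Lambda^0_j(\theta^*)n^{-\alpha j} + o(n^{-\alpha p})$. For $1 \leq l \leq p-1$, assumption \eqref{dev_diff_h} gives $D^l h^n(\theta^*) = D^l h(\theta^*) + \sum_{j=1}^{p-l}\Lambda^l_j(\theta^*)n^{-\alpha j} + o(n^{-\alpha(p-l)})$; for $l=p$, only the local uniform convergence from item (3) is available, but this is enough since $(\delta_n)^{\otimes p}=O(n^{-\alpha p})$ already. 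Splitting the $l=1$ contribution and using invertibility of $Dh(\theta^*)$, one obtains
$$Dh(\theta^*)\delta_n = \sum_{j=1}^{p}\frac{\Lambda^0_j(\theta^*)}{n^{\alpha j}} - \sum_{l=1}^{p-1}\frac{1}{l!}\left(\sum_{j=1}^{p-l}\frac{\Lambda^l_j(\theta^*)}{n^{\alpha j}}\right)(\delta_n)^{\otimes l} - \sum_{l=2}^{p}\frac{1}{l!}D^l h(\theta^*)(\delta_n)^{\otimes l} + o(n^{-\alpha p}).$$

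\textbf{Coefficient extraction and closure.} Now I substitute the $(p-1)$-th order expansion of $\delta_n$ into every $(\delta_n)^{\otimes l}$ on the right-hand side and expand multinomially. Every resulting monomial has the form $n^{-\alpha q}$ times a fixed vector built from $Dh(\theta^*)^{-1}$, the $\Lambda^l_j(\theta^*)$, $D^l h(\theta^*)$ and the previously defined $C_1,\ldots,C_{p-1}$. Terms with $q<p$ must, by uniqueness of the expansion and the induction hypothesis, reproduce exactly $C_1/n^{\alpha},\ldots,C_{p-1}/n^{\alpha(p-1)}$ (a consistency check); the aggregate of all terms of order $n^{-\alpha p}$ then defines the new coefficient $C_p$ explicitly. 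All cross-remainders, namely $o(n^{-\alpha(p-1)})\cdot(\delta_n)^{\otimes l}$ and products of the residual $R_n := \delta_n - \sum_{j=1}^{p-1}C_j n^{-\alpha j}$ (with $R_n=o(n^{-\alpha(p-1)})$) against factors of size $O(n^{-\alpha})$ or smaller, are absorbed into $o(n^{-\alpha p})$. Applying $Dh(\theta^*)^{-1}$ closes the induction.

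\textbf{Main obstacle.} No new analytic input is required beyond Taylor--Young and the assumptions \textbf{[H-p]}; the real work is the combinatorial bookkeeping of the multinomial expansion of $(\delta_n)^{\otimes l}$ and the careful accounting showing that every cross term is either one of the identified $C_q$-contributions ($q\leq p$) or a genuine $o(n^{-\alpha p})$ remainder. This is tedious but entirely mechanical once the induction is set up as above.
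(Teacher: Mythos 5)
Your proposal follows essentially the same route as the paper's proof: an induction on the order of the expansion, a Taylor expansion of $h^n$ at $\theta^*$ exploiting $h^n(\theta^{*,n}) = 0$, replacement of $D^l h^n(\theta^*)$ by $D^l h(\theta^*)$ plus lower-order corrections via \eqref{dev_diff_h}, inversion of $Dh(\theta^*)$, and a bootstrap in which the order-$(p-1)$ expansion of $\delta_n$ is re-inserted to produce $C_p$.

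One point where your write-up is looser than the paper's: you invoke Taylor--Young to order $p$ and write the remainder as $o(|\delta_n|^p)$. Since the function being Taylor-expanded is $h^n$, which changes with $n$ and is evaluated at $\theta^{*,n}$, which also changes with $n$, the little-$o$ constant is a priori $n$-dependent and one cannot simply substitute $|\delta_n| = O(n^{-\alpha})$ to conclude $o(n^{-\alpha p})$. The paper avoids this by stopping the Taylor sum at order $p-1$ and keeping the integral-form remainder $\int_0^1 \frac{(1-t)^{p-1}}{(p-1)!} D^p h^n(\theta^* + t\delta_n)\,\delta_n^{\otimes p}\,dt$, then using the locally uniform convergence $D^p h^n \to D^p h$ together with $\delta_n = O(n^{-\alpha})$ to extract $\frac{1}{p!} D^p h(\theta^*)\delta_n^{\otimes p} + o(n^{-\alpha p})$ uniformly. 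Your version can be repaired (the locally uniform convergence hypothesis implies equicontinuity of $\{D^p h^n\}$ near $\theta^*$, which gives a uniform modulus for the Taylor--Young remainder), but as stated the step is not justified, and the integral-remainder route of the paper is cleaner precisely because the needed uniformity is built into the hypothesis it invokes.
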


\begin{proof} If \textbf{[H-p]}, $p \in \N^{*}$, holds then Proposition \ref{premier_pas} gives a first order expansion for $\theta^{*,n}- \theta^*$. We now prove the inductive step that is if $\theta^{*,n}-\theta^*$ has an expansion of order $k-1$ then an expansion holds at order $k$, for $k\leq p$. The basic idea does not change from the previous computation.
From the development of the discretization error, we have:

\begin{equation}
h(\theta^*) - h^n(\theta^*) = \frac{\Lambda_1^0(\theta^*)}{n^{\alpha}} + \cdots +  \frac{\Lambda_k^0(\theta^*)}{n^{\alpha k}} + o\left( \frac{1}{n^{\alpha k}}\right) .
\end{equation}

On the other hand, we write a Taylor's expansion of $h^n$ up to the same order $k-1$:

\begin{equation}\label{dev_h}
h^n(\theta^{*,n}) - h^n(\theta^*) = Dh^n(\theta^*)(\theta^{*,n}- \theta^*) + \cdots + \frac{1}{(k-1)!} D^{k-1}h^n(\theta^*)(\theta^{*,n} - \theta^*)^{(k-1)} + R_{k-1}^n(\theta^{*,n} - \theta^*),
\end{equation}

\noindent with the remainder in integral form satisfying:
\begin{align*}
R_{k-1}^n(\theta^{*,n} - \theta^*) & = \int_0^1 \frac{(1-t)^{k-1}}{(k-1)!} D^{k} h^n ( t \theta^{*,n}+ (1-t)\theta^*) (\theta^{*,n} - \theta^*)^{(k)} dt  = \frac{1}{k!}D^{k}h(\theta^*)(\theta^{*,n}-\theta^*)^{(k)}  + o\left(\frac{1}{n^{\alpha k}}\right)
\end{align*}

\noindent where we used that $(D^{k} h^n)_{n\geq1}$ converges locally uniformly to $D^{k}h$, $k\in\leftB1,p \rightB$, and $\theta^{*,n}-\theta^* = \O(n^{-\alpha})$ for the last equality. Let us note that for $l\in \leftB1,k\rightB$, $D^{l}h(\theta^*)$ (as $\Lambda^{l}_{j}(\theta^*)$, $j=1, \cdots,k-l$) is a multilinear maps from $(\R^d)^l$ to $\R^d$. The expansions \eqref{dev_diff_h} allow us to replace the derivatives of $h^n$ by the derivatives of $h$ in \eqref{dev_h} at the cost of an error term, that is:
%

\begin{align*}
h^n(\theta^{*,n}) - h^n(\theta^*) &= Dh(\theta^*)(\theta^{*,n}- \theta^*) + \left(\frac{\Lambda^1_1(\theta^*)}{n^{\alpha}} + \cdots + \frac{\Lambda^1_{k-1}(\theta^*)}{n^{\alpha (k-1)}} + o\left( \frac{1}{n^{\alpha (k-1)}}\right)\right)(\theta^{*,n} - \theta^*) \\
&+ \cdots + \frac{1}{(k-1)!} \left( D^{k-1}h(\theta^*) + \frac{\Lambda^{k-1}_1(\theta^*)}{n^{\alpha}}+ o\left( \frac{1}{n^{\alpha}}\right) \right)(\theta^{*,n} - \theta^*)^{(k-1)}\\
&+ \frac{1}{k!}D^kh(\theta^*)(\theta^{*,n}-\theta^*)^{(k)}+o\left(\frac{1}{n^{\alpha k}}\right).
\end{align*}

Since $h^n(\theta^{*,n}) - h^n(\theta^*) =-h^n(\theta^*) = h(\theta^*) - h^n(\theta^*) $ and $Dh(\theta^*)$ is invertible, the previous equality implies

\begin{align*}
& \frac{Dh(\theta^*)^{-1}\Lambda_1^0(\theta^*)}{n^{\alpha}} + \cdots +  \frac{Dh(\theta^*)^{-1}\Lambda_k^0(\theta^*)}{n^{\alpha k}} + o\left( \frac{1}{n^{\alpha k}}\right) = \\
& \theta^{*,n}- \theta^* + \left(\frac{Dh(\theta^*)^{-1}\Lambda^1_1(\theta^*)}{n^{\alpha}}+ \cdots + \frac{Dh(\theta^*)^{-1}\Lambda^1_{k-1}(\theta^*)}{n^{\alpha (k-1)}} + o\left( \frac{1}{n^{\alpha (k-1)}}\right) \right)(\theta^{*,n}- \theta^*) \\
&+ \cdots +  \frac{1}{(k-1)!}\left( Dh(\theta^*)^{-1}D^{k-1}h(\theta^*)+ \frac{Dh(\theta^*)^{-1}\Lambda^{k-1}_1(\theta^*)}{n^{\alpha}}  + o\left(\frac{1}{n^{\alpha}}\right) \right)(\theta^{*,n}- \theta^*)^{(k-1)}\\
&+ \frac{1}{k!}Dh(\theta^*)^{-1}D^kh(\theta^*)(\theta^{*,n}- \theta^*)^{(k)} + o\left( \frac{1}{n^{\alpha k}}\right).
\end{align*}

The last equation should be seen as a "bootstrap" for $\theta^{*,n}- \theta^*$, that is:

\begin{align}
\theta^{*,n}- \theta^* & =
\frac{Dh(\theta^*)^{-1}\Lambda^0_1(\theta^*)}{n^{\alpha}} + \cdots +  \frac{Dh(\theta^*)^{-1}\Lambda^0_k(\theta^*)}{n^{\alpha k}} + o\left( \frac{1}{n^{\alpha k}}\right) \nonumber\\
& - \left(\frac{Dh(\theta^*)^{-1}\Lambda^1_1(\theta^*)}{n^{\alpha}} + \cdots + \frac{Dh(\theta^*)^{-1}\Lambda^1_{k-1}(\theta^*)}{n^{\alpha (k-1)}} + o\left( \frac{1}{n^{\alpha (k-1)}}\right) \right)(\theta^{*,n}- \theta^*) \nonumber\\
& - \cdots \nonumber \\
& - \frac{Dh(\theta^*)^{-1}}{(k-1)!} \left( D^{k-1}h(\theta^*) + \frac{\Lambda^{k-1}_1(\theta^*)}{n^{\alpha}}
+o\left( \frac{1}{n^{\alpha}}\right) \right) (\theta^{*,n}- \theta^*)^{(k-1)}\nonumber\\
&- \frac{1}{k!}Dh(\theta^*)^{-1}D^kh(\theta^*)(\theta^{*,n}- \theta^*)^{(k)} + o\left( \frac{1}{n^{\alpha k}}\right), \label{bootstrap}
\end{align}

The idea now is to plug the expansion of $\theta^{*,n}- \theta^*$ in the right hand side of \eqref{bootstrap} and check that the first remainder term comes at order $o(n^{-\alpha k})$. It is clear that on the first line the remainder term is of order $o(n^{-\alpha k})$. Moreover, for any $l \in \leftB2,k\rightB$, the generic $l$-th term writes in the $i$-th component:

\begin{align}
& \frac{1}{l!} \left( \left(D^lh(\theta^*) + \frac{\Lambda_{1}^{l}(\theta^*)}{n^{\alpha}} + \cdots +\frac{\Lambda_{k-l}^l(\theta^*)}{n^{\alpha (k-l)}} + o\left( \frac{1}{n^{\alpha (k-l)}}\right) \right) (\theta^{*,n}-\theta^*)^{(l)} \right)_{i} \nonumber \\
& = \sum_{i_1+ \cdots + i_d = l } \frac{1}{i_1! \cdots i_d!}\Lambda_{i_1,\cdots,i_d} (\theta^{*,n}-\theta^*)_1^{i_1} \times \cdots\times (\theta^{*,n}-\theta^*)_d^{i_d}  + o\left( \frac{1}{n^{\alpha (k-l)}}\right)(\theta^{*,n}-\theta^*)_1^{i_1} \times \cdots\times (\theta^{*,n}-\theta^*)_d^{i_d} \nonumber \\
& = \sum_{i_1+ \cdots + i_d = l } \frac{1}{i_1! \cdots i_d!} \Lambda_{i_1,\cdots,i_d} (\theta^{*,n}-\theta^*)_1^{i_1} \times \cdots\times (\theta^{*,n}-\theta^*)_d^{i_d}  + o\left( \frac{1}{n^{\alpha k}}\right) \label{onecomponent:sum}
\end{align}

\noindent where $\Lambda_{i_1,\cdots,i_d} = \frac{\partial^{l}h_i}{\partial \theta^{i_1}_{1} \cdots \partial \theta^{i_d}_{d}}(\theta^*) + \frac{(\Lambda_{1}^{l}(\theta^*))_i}{n^{\alpha}} + \cdots +\frac{(\Lambda_{k-l}^l(\theta^*))_{i}}{n^{\alpha (k-l)}}$ with $(\Lambda^{l}_{j}(\theta^*))_{i}$ for $j \in \leftB1,k-l\rightB$ satisfying $\frac{\partial^{l}h_i}{\partial \theta^{i_1}_{1} \cdots \partial \theta^{i_d}_{d}}(\theta^*) - \frac{\partial^{l}h^{n}_i}{\partial \theta^{i_1}_{1} \cdots \partial \theta^{i_d}_{d}}(\theta^*) = (\Lambda^{l}_{1}(\theta^*))_{i}/n^{\alpha} + \cdots + (\Lambda^{l}_{k-l}(\theta^*))_{i} /n^{\alpha (k-l)} + o(1/n^{\alpha (k-l)})$ and where we used that $(\theta^{*,n}-\theta^*)_1^{i_1} \times \cdots\times (\theta^{*,n}-\theta^*)_d^{i_d} = \O(1/n^{\alpha l})$ for the last equality. Now, replacing $(\theta^{*,n}-\theta^*)_i$ by its expansion, we observe that the generic term in \eqref{onecomponent:sum} satisfies 
\begin{align*}
& \Lambda_{i_1,\cdots,i_d} \left( \frac{C^{1}_1}{n^{\alpha}} + \cdots + \frac{C^{1}_{k-1}}{n^{\alpha (k-1)}} + o \left( \frac{1}{n^{\alpha (k-1)}}\right) \right)^{i_1}
\times \cdots \times \left( \frac{C^{d}_1}{n^{\alpha}} + \cdots + \frac{C^{d}_{k-1}}{n^{\alpha (k-1)}} + o \left( \frac{1}{n^{\alpha (k-1)}}\right) \right)^{i_d} \\
& = \Lambda_{i_1,\cdots,i_d} \left( \frac{\tilde{C}}{n^{\alpha l}} + \cdots + o\left(\frac{1}{n^{\alpha (k+l-2)}}\right) \right) \\
& = \Lambda_{i_1,\cdots,i_d} \left( \frac{\tilde{C}}{n^{\alpha l}} + \cdots + o\left(\frac{1}{n^{\alpha k}}\right) \right)
\end{align*}

\noindent where $\tilde{C} = (C^{1}_1)^{i_1} \times \cdots \times (C^{d}_1)^{i_d}$. We clearly see that the expression above yields an expansion in powers of $n^{-\alpha}$ with a remainder at order $o(n^{-\alpha k})$. Formally, as the power in the expansion \eqref{dev_diff_h} goes down, the power in the derivatives grows, compensating exactly and giving the right order in the remainder.

Finally, we expand the previous equation and group together the different terms with respect to the power of $n^{-\alpha}$. As we observed above, the remainder term is at order $o\left(n^{-\alpha k}\right)$, because of the compensation between the power in the expansion \eqref{dev_diff_h} and the order of the Taylor expansion. This completes the proof.
\end{proof}
\subsection{Multi-step Richardson-Romberg extrapolation for stochastic approximation}
Multi-step Richardson-Romberg extrapolation was successfully applied in the context of Monte Carlo linear estimator for the computation of $\E[f(X_T)]$, where $f: \R^d \rightarrow \R$ (with possible extension to the case of path-dependent options) and $X$ is the (unique) strong solution to a SDE, see \cite{GPag:07}. In this section, we propose a multi-step Richardson-Romberg SA estimator with a control of the statistical error. We proceed as follows. Let $R\geq2$ be an integer. To devise a SA estimator whose target has an implicit discretization error of order $n^{-\alpha R}$ as $n\rightarrow + \infty$, we introduce a sequence of $R$ random vectors $\left\{U^{rn}, r \in \leftB 1, R \rightB \right\}$, $n\in \N^{*}$. Throughout this section we will assume that this sequence satisfies $U^{rn} \overset{\P}{\longrightarrow} U^{r}$ as $n\rightarrow + \infty$ with $U^{r} \overset{d}{=} U$, $r\in \leftB1,R\rightB$, all variables being defined on the same probability space. If assumption \textbf{[H-R]} holds then for all $r\in \leftB 1, R\rightB$ one gets
$$
\theta^{*,rn} = \theta^{*} + \sum_{p=1}^{R-1} \frac{C_p}{r^{\alpha p}} \frac{1}{n^{\alpha p}} + \frac{C_R}{r^{\alpha R}} \frac{1}{n^{\alpha R}} \left(1+ \epsilon_{r}(n) \right) 
$$ 

\noindent with $\epsilon_r(n) \rightarrow 0$ as $n \rightarrow + \infty$. Then, one defines the Vandermonde $Rd \times (R-1)d$ matrix 
$$
V = \left[ \frac{I_d}{r^{\alpha p}}\right]_{1\leq r \leq R, 1 \leq p \leq R-1}
$$

\noindent and the extended $Rd \times d$ unit matrix $\bold{I}=\left(I_d, \cdots, I_d\right)^{T}$ where $I_d$ is the identity matrix of dimension $d$. Now we write
\begin{equation}
\label{first:step:richardromberg}
\begin{pmatrix}
& \vdots &\\
& \theta^{*,r n} &\\
& \vdots & \\
\end{pmatrix}_{1 \leq r \leq R}
=
\bold{I} \theta^{*} + V \begin{pmatrix}
& \vdots &\\
&  \frac{C_{r}}{n^{\alpha r}} &\\
& \vdots & \\
\end{pmatrix}_{1 \leq r \leq R-1} 
+ 
 \begin{pmatrix}
& \vdots &\\
& \frac{C_R}{r^{\alpha R}} \frac{1}{n^{\alpha R}} \left(1+ \epsilon_r(n) \right)&\\
& \vdots & \\
\end{pmatrix}_{1 \leq r \leq R} .
\end{equation}

 We consider the $ Rd \times d$ weight matrix $\bold{w}= (\bold{w}_1, \cdots, \bold{w}_R)^{T}$, $\bold{w}_i$ being a $d\times d$ matrix for $i \in \leftB 1, R \rightB$ satisfying 
\begin{equation}
\label{twoeq:vandermonde:syst}
\bold{w}^{T} \bold{I} = I_d \ \ \ \mbox{and} \ \ \ \bold{w}^{T} V = 0_{d\times d(R-1)}
\end{equation}

\noindent which is equivalent to 
\begin{equation}
\label{vandermonde:sys}
 \tilde{V} \bold{w} = E_1
\end{equation}

\noindent with $E_1=(I_d, 0_{d\times d(R-1)})^{T}$ and $\tilde{V}$ is the Vandermonde matrix defined by
$$
\tilde{V} = \begin{pmatrix}
  I_{d} & I_d & \cdots & I_d \\
  I_{d} & \frac{I_d}{2^{\alpha}}      &  \cdots & \frac{I_d}{R^{\alpha}}  \\
  \vdots & \vdots & \cdots & \vdots \\
  I_d  & \frac{I_d}{2^{(R-1)\alpha}}  & \cdots & \frac{I_d}{R^{(R-1)\alpha}} 
\end{pmatrix}.
$$

Thanks to Cramer's rule, the solution $\bold{w}$ to \eqref{vandermonde:sys} is explicitly given by
\begin{equation}
\label{sol:vandermonde:syst}
\forall r \in \left\{ 1, \cdots, R\right\}, \ \ \bold{w}_r = (-1)^{R-r} \frac{r^{\alpha R}}{\prod_{j=0}^{r-1} (r^{\alpha}-j^{\alpha}) \prod_{j=r+1}^{R} (j^{\alpha} - r^{\alpha})} I_d,
\end{equation}

\noindent where we use the convention $ \prod_{j=R+1}^{R} (j^{\alpha} - r^{\alpha})=1$. Let us note that when $\alpha=1$ this last expression simplifies to $\bold{w}_r = (-1)^{R-r} (r^{R}/(r! (R-r)!)) I_d$, $r=1, \cdots, R$. The first condition in \eqref{twoeq:vandermonde:syst} reads $\sum_{r=1}^{R} \bold{w}_r = I_d$ which implies that $\lim_{n\rightarrow + \infty} \sum_{r=1}^{R} \bold{w}_r \theta^{*,rn} =  \sum_{r=1}^{R} \bold{w}_r  \theta^{*} = \theta^*$. Moreover, multiplying \eqref{first:step:richardromberg} on the left by $\bold{w}^{T}$ yields
\begin{equation}
\label{new:expans:weak:err}
\sum_{r=1}^{R} \bold{w}_r \theta^{*,rn} = \theta^{*} + C_R \frac{1}{n^{\alpha R}} \tilde{\bold{w}}_{R+1} \left(1+ \epsilon_{R+1}(n)\right)
\end{equation}

\noindent where 
\begin{equation}
\label{damp:weight}
\tilde{\bold{w}}_{R+1} = \sum_{r=1}^{R} (-1)^{R-r} \frac{r^{\alpha R}}{\prod_{j=0}^{r-1} (r^{\alpha}-j^{\alpha}) \prod_{j=r+1}^{R} (j^{\alpha} - r^{\alpha})} \frac{1}{r^{\alpha R}} =  \frac{(-1)^{R-1}}{R!^{\alpha}} 
\end{equation}

\noindent and
\begin{equation}
\label{asymp:rest}
\epsilon_{R+1}(n) = \frac{1}{\tilde{\bold{w}}_{R+1}} \sum_{r=1}^{R}  \frac{(-1)^{R-r}}{\prod_{j=0}^{r-1} (r^{\alpha}-j^{\alpha}) \prod_{j=r+1}^{R} (j^{\alpha} - r^{\alpha})}  \epsilon_{r}(n) \rightarrow 0, \ \mbox{ as } \ n\rightarrow + \infty.
\end{equation}

We now approximate the new target $\sum_{r=1}^{R} \bold{w}_{r} \theta^{*,rn}$, by means of $M\in \N^{*}$ steps of $R$ SA schemes which write
\begin{equation}
\label{stoch:approx:algs}
\forall r \in \leftB 1, R\rightB, \ \ \theta^{rn}_{p+1} = \theta^{rn}_{p} - \gamma_{p+1} H(\theta^{rn}_p, (U^{rn})^{p+1}), \ p \in \leftB0, M-1\rightB
\end{equation}

\noindent where $((U^{rn})^{p},r=1,\cdots,R)_{p \in \leftB 1, M\rightB}$ is an i.i.d sequence with the same law as $(U^{rn},r=1,\cdots,R)$, $\theta^{rn}_0$, $r=1\cdots,R$ are the initial conditions independent of the innovation sequence satisfying $\sup_{n\geq1} \E|\theta^{n}_0|^2 < + \infty$ and the sequence $(\gamma_p)_{p\geq1}$ satisfies \eqref{step:rm:assump}. Now the new \emph{statistical error} of the Richardson-Romberg extrapolation estimator writes 
$$
\mathcal{E}^{R-R}_S(n, M) := \sum_{r=1}^{R} \bold{w}_r  (\theta^{*,rn} - \theta^{rn}_{M}).
$$

We are looking for an efficient estimator among the family $\left\{ \sum_{r=1}^R \bold{w}_r \theta^{rn}_{M}, (n, M) \in (\N^{*})^2 \right\}$. To be more precise, we will minimize the computational cost for a given $L^{1}(\P)$-error $\varepsilon >0$. We assume that the cost of a single simulation of $U^{n}$ is proportional to $n$ and is given by $K\times n$, where $K$ is a generic positive constant independent of $n$. It notably corresponds to the case of discretization schemes of a stochastic process. In the case of the Richardson-Romberg method for SA, at each step $p \in \leftB 1, M \rightB$ of the procedure, for every $r\in \leftB1, R\rightB$, one has to simulate the random vector $(U^{n}, U^{2n}, \cdots, U^{Rn})$ so that the global computational cost is given by 
$$
\textnormal{Cost(R-R)} := K M   \sum_{r=1}^{R} r n  = K M n \frac{R(R+1)}{2}.
$$

 Hence the problem of interest writes
$$
(n(\epsilon),M(\epsilon)) = \argmin_{\E|\mathcal{E}^{R-R}_{glob}| \leq \varepsilon}  \textnormal{Cost(R-R)}.
$$

From a practical point of view the constraint: $\E|\mathcal{E}^{R-R}_{glob}| \leq \varepsilon$ is not tractable since one does not have any explicit control on $\E|\mathcal{E}^{R-R}_{glob}|$. Hence one is led to consider some sharp upper bound of this $L^{1}(\P)$-norm, namely
\begin{align}
\label{L1:error:bound}
\E|\mathcal{E}^{R-R}_{glob}| & \leq \left|\sum_{r=1}^{R} \bold{w}_r \theta^{*,rn} - \theta^{*} \right| + \E\left[\left|\sum_{r=1}^{R} \bold{w}_r  (\theta^{*,rn} - \theta^{rn}_{M})\right|\right] \nonumber \\
& \leq  \frac{|C_R|}{(R! n^{R})^{ \alpha}}  \left(1+ |\epsilon_{R+1}(n)|\right) + \E\left[\left|\sum_{r=1}^{R} \bold{w}_r  (\theta^{*,rn} - \theta^{rn}_{M})\right|\right].
\end{align}
 
 Note that the bound \eqref{L1:error:bound} is not tractable since we do not have any closed form expression for the last term appearing in the right-hand side, namely the $L^{1}$-norm (or $L^{2}$-norm) of the statistical error of the Richardson-Romberg SA estimator. Again we will consider some sharp upper bound. In order to derive an explicit control we assume that the following conditions are in force:
\begin{trivlist}
\item[\A{HUI}] $\exists \delta >0$, such that $\forall \theta \in \R^d$, $\sup_{n\in \N^{*}} \E[|H(\theta,U^{n})|^{2+\delta}] < + \infty$.
\item[\A{HC1}] $\exists C>0$ such that $\forall n \in \N^{*}, \forall \theta \in \R^d$, $\E[|H(\theta,U^{n})|^2] \leq C(1+ |\theta-\theta^{*,n}|^2).$
\item[\A{HC2}] $\forall \theta \in \R^d$, $\P(U \notin \mathcal{C}_{\theta}) =0$ with $\mathcal{C}_{\theta}:=\left\{x \in \R^q: x\mapsto H(\theta,x) \mbox{ is continuous at } x \right\}$.

\item[\A{HRG}] There exists $a\in (0,1]$, 
$$
\sup_{n \in \N^{*}, (\theta, \theta') \in (\R^d)^2} \frac{\E|H(\theta, U^{n})-H(\theta', U^{n})|^2}{|\theta-\theta'|^{2a}}< +\infty.
$$
\item[\A{HUA}] For each $n\in \N^{*}$, the map $h^{n}:\theta \in \R^d \mapsto \E[H(\theta,U^{n})]$ is continuously differentiable with $Dh^{n}$ Lipschitz-continuous uniformly in $n$ and there exists $\underline{\lambda}>0 $ s.t. $  \inf_{n \in \N^{*}, \theta \in \R^d} \lambda_{min} \left( (Dh^{n}(\theta) + Dh^{n}(\theta)^{T})/2 \right) > \underline{\lambda}$ where $\lambda_{min}(A)$ denotes the lowest eigenvalue of the matrix $A$.
(\textit{Uniform Attractivity}). 

\item[\A{HS}] The step sequence is given by $\gamma_p = \gamma(p)$, $p\geq1$, where $\gamma$ is a positive function defined on $[0, + \infty[$ decreasing to zero satisfying one of the following assumptions:
\begin{itemize}

\item $\gamma$ varies regularly with exponent $(-\rho)$, $\rho \in (1/2,1)$, that is, for any $x>0$, $\lim_{t\rightarrow + \infty}\gamma(tx)/\gamma(t)=x^{-\rho}$. 

\item for $t\geq1$, $\gamma(t)=\gamma_0/t$ and $\gamma_0$ satisfies $2 \underline{\lambda} \gamma_0>1$.

\end{itemize}
\end{trivlist}

\begin{REM}\label{remark:attractivity} Assumption \A{HUA} already appears in \cite{Duflo1996} and \cite{ben:met:pri}, see also \cite{fri:men:12} and \cite{fat:fri:13} in another context. It allows to control the $L^2$-norm $\E|\theta^{rn}_p-\theta^{*,rn}|^2$, $r\in \leftB1,R\rightB$ with respect to the step $\gamma(p)$ uniformly in $n$, see section \ref{technical:res:sec}, lemma \ref{sstrongerror:tech:lemme} . As discussed in \cite{Kushner2003}, (Chapter 10, Section 5, p.350, Theorem 5.2) if one considers the projected version of the algorithm \eqref{RM} on a bounded convex set $D$, namely
$$
\theta^{n}_{p+1} = \Pi_{D}\left[ \theta^{n}_{p} - \gamma_{p+1} H(\theta^{n}_{p}, (U^{n})^{p+1})\right], \ p \in \leftB 0, M-1\rightB,
$$

\noindent where $\Pi_{D}$ denotes the orthogonal projection operator on $D$ (for instance one may set $D=\Pi_{i=1}^{d} [a_i,b_i]$, $-\infty < a_i < b_i <+\infty$) and  $\forall n\geq1$, $\theta^{*,n} \in int(D)$, as very often happens from a practical point of view, then assumption \A{HUA} can be localized on $D$, that is $ \inf_{n \in \N^{*}, \theta \in D} \lambda_{min} \left( (Dh^{n}(\theta) + Dh^{n}(\theta)^{T})/2 \right) > \underline{\lambda}$.

We also want to point out that if assumption \A{HUA} is satisfied then passing to the limit as $n\rightarrow + \infty$ one easily shows that $ \lambda_{min} \left( (Dh(\theta^*) + Dh(\theta^*)^{T})/2 \right) \geq\underline{\lambda}$. 
\end{REM}

\begin{PROP}{($L^{1}(\P)$ control of the statistical error)}
\label{statistical:error}
Let $R \in \N^{*}$. Suppose that for $r \in \leftB1,R\rightB$, $U^{rn} \overset{\P}{\longrightarrow} U^{r}$ and $\theta^{n}_0 \overset{\P}{\longrightarrow} \theta_0$, as $n\rightarrow +\infty$. Under \A{H-R}, \A{HUI}, \A{HC1}, \A{HC2},  \A{HRG}, \A{HS} and \A{HUA}, one has for some positive constant $C:=C(\gamma,\underline{\lambda})$
\begin{align*}
 \E[|\mathcal{E}^{R-R}_S|] & \leq C \E\left[\left|\sum_{r=1}^{R} \bold{w}_r H(\theta^*,U^{r})\right|^2\right]^{1/2} \gamma^{1/2}(M) \left( 1 + \phi^{R}_1(n) + \phi^{R}_2(M)\right)
\end{align*}

\noindent where $\phi^{R}_1, \phi^{R}_2$ are two positive functions satisfying: $\phi^{R}_1(n)\rightarrow 0$ and $\phi^{R}_2(M)\rightarrow 0$ respectively as $M\rightarrow + \infty$, $n\rightarrow + \infty$ and $\phi^{R}_2$ is non-increasing.
\end{PROP}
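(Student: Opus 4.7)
The plan is to linearize the SA recursion around each $\theta^{*,rn}$, unroll it so that $\mathcal{E}^{R-R}_S$ becomes a discrete stochastic integral against a martingale, and then extract the sharp constant $\bigl\|\sum_r\bold{w}_r H(\theta^*,U^r)\bigr\|_2$ from its variance, exploiting the coupling $U^{rn}\overset{\P}{\to}U^r$. Since $h^{rn}(\theta^{*,rn})=0$, Taylor's formula with integral remainder gives $h^{rn}(\theta^{rn}_p)=A^{rn}_p(\theta^{rn}_p-\theta^{*,rn})$ with $A^{rn}_p:=\int_0^1 Dh^{rn}(\theta^{*,rn}+s(\theta^{rn}_p-\theta^{*,rn}))ds$, and \A{HUA} guarantees that the symmetric part of $A^{rn}_p$ is uniformly $\geq\underline{\lambda}I_d$. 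Freezing the drift as $A^{rn}:=Dh^{rn}(\theta^{*,rn})$ and setting $B^{rn}_p:=A^{rn}_p-A^{rn}$, the uniform Lipschitzness of $Dh^{rn}$ from \A{HUA} yields $|B^{rn}_p|\leq C|\theta^{rn}_p-\theta^{*,rn}|$. The recursion becomes
\begin{equation*}
\theta^{rn}_{p+1}-\theta^{*,rn}=(I-\gamma_{p+1}A^{rn})(\theta^{rn}_p-\theta^{*,rn})-\gamma_{p+1}\Delta M^{rn}_{p+1}-\gamma_{p+1}B^{rn}_p(\theta^{rn}_p-\theta^{*,rn}),
\end{equation*}
with $\Delta M^{rn}_{p+1}:=H(\theta^{rn}_p,(U^{rn})^{p+1})-h^{rn}(\theta^{rn}_p)$ an $\mathcal{F}_p$-martingale increment. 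Introducing the deterministic resolvent $\Pi^{rn}_{k,M}:=\prod_{j=k+1}^M(I-\gamma_j A^{rn})$ and iterating,
\begin{equation*}
\theta^{rn}_M-\theta^{*,rn}=\Pi^{rn}_{0,M}(\theta^{rn}_0-\theta^{*,rn})-\sum_{p=0}^{M-1}\gamma_{p+1}\Pi^{rn}_{p+1,M}\bigl[\Delta M^{rn}_{p+1}+B^{rn}_p(\theta^{rn}_p-\theta^{*,rn})\bigr].
\end{equation*}

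Summing over $r$ against $\bold{w}_r$, using $\|\cdot\|_1\leq\|\cdot\|_2$, Minkowski's inequality, and the determinism of $\Pi^{rn}_{p+1,M}$ to invoke orthogonality of the martingale increments, the central quantity to estimate is
\begin{equation*}
\sum_{p=0}^{M-1}\gamma_{p+1}^2\,\E\bigl|\sum_r\bold{w}_r\Pi^{rn}_{p+1,M}\Delta M^{rn}_{p+1}\bigr|^2.
\end{equation*}
At fixed $p$, the inner second moment is the $\mathcal{F}_p$-variance of $\sum_r\bold{w}_r\Pi^{rn}_{p+1,M}H(\theta^{rn}_p,(U^{rn})^{p+1})$ in expectation. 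I would first replace $\theta^{rn}_p$ by $\theta^*$ inside $H$ via the H\"older bound \A{HRG} combined with $\sup_n\E|\theta^{rn}_p-\theta^{*,rn}|^2\leq C\gamma(p)$ (Lemma \ref{sstrongerror:tech:lemme}); next replace $\Pi^{rn}_{p+1,M}$ by the common limit $\Pi_{p+1,M}:=\prod_{j=p+2}^M(I-\gamma_j Dh(\theta^*))$ using the locally uniform convergence $Dh^n\to Dh$ from \A{H-R} and $\theta^{*,rn}\to\theta^*$. Then \A{HC2}, \A{HUI} together with $U^{rn}\overset{\P}{\to}U^r$ allow us to pass to the limit in distribution and obtain
\begin{equation*}
\E\bigl|\sum_r\bold{w}_r\Pi^{rn}_{p+1,M}\Delta M^{rn}_{p+1}\bigr|^2\leq \|\Pi_{p+1,M}\|^2\,\E\bigl|\sum_r\bold{w}_r H(\theta^*,U^r)\bigr|^2\,(1+\eta_{n,p}),
\end{equation*}
with $\eta_{n,p}\to 0$ as $\min(n,p)\to\infty$; note that $\sum_r\bold{w}_r h(\theta^*)=0$, so the second moment of $\sum_r\bold{w}_r H(\theta^*,U^r)$ coincides with its variance.

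The $\gamma^{1/2}(M)$ rate then follows from the classical SA estimate $\sum_{p=0}^{M-1}\gamma_{p+1}^2\|\Pi_{p+1,M}\|^2\leq C(\gamma,\underline{\lambda})\gamma(M)$, a consequence of the exponential decay $\|\Pi_{p+1,M}\|\leq\exp(-\underline{\lambda}\sum_{j=p+2}^M\gamma_j)(1+O(\gamma_{p+2}))$ and discrete Laplace-type asymptotics, handled case-by-case under \A{HS}: directly for $\gamma(t)=\gamma_0/t$ with $2\underline{\lambda}\gamma_0>1$, and via regular variation for $\rho\in(1/2,1)$. The initial-condition contribution $\|\sum_r\bold{w}_r\Pi^{rn}_{0,M}(\theta^{rn}_0-\theta^{*,rn})\|_2$ is exponentially small in $\sum_{k=1}^M\gamma_k$ and is absorbed into $\phi_2^R(M)\gamma^{1/2}(M)$; the $B^{rn}_p$-perturbation, whose $L^1$-size is bounded by $C\gamma(p)$ thanks to Cauchy--Schwarz and the Lipschitz estimate, contributes at order $\gamma(M)=o(\gamma^{1/2}(M))$. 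The main obstacle is precisely the sharp step: replacing the crude bound $\sum_r|\bold{w}_r|^2\E|H(\theta^*,U^r)|^2$ by the tight variance $\E|\sum_r\bold{w}_r H(\theta^*,U^r)|^2$, since the latter encodes the correlation built into the coupling $U^{rn}\to U^r$ and is exactly what makes the Richardson-Romberg estimator competitive. Splitting the residual terms into those vanishing purely in $n$ (collected in $\phi_1^R$) and those vanishing purely in $M$ (collected in $\phi_2^R$) is then a routine bookkeeping exercise.
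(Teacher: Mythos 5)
Your plan follows the same architecture as the paper's proof: unroll the recursion into initial-condition, martingale, and remainder contributions, use \A{HRG} with Lemma~\ref{sstrongerror:tech:lemme} to bring $\theta^{rn}_p$ to $\theta^*$ inside $H$, invoke \A{HC2}, \A{HUI} and the coupling $U^{rn}\overset{\P}{\to}U^r$ to identify the sharp variance $\E|\sum_r\bold{w}_r H(\theta^*,U^r)|^2$, and then conclude via Lemma~\ref{stepseq:tech:lemme}. You correctly identify the coupling as what makes the tight constant attainable. The one substantive point where you diverge is the choice of frozen matrix: you linearize at $A^{rn}:=Dh^{rn}(\theta^{*,rn})$, producing resolvents $\Pi^{rn}_{k,M}$ that depend on $r$ and $n$, whereas the paper freezes directly at $Dh(\theta^*)$, so that a single common $\Pi_{k,M}$ appears and the entire drift discrepancy is absorbed into the remainder
\begin{align*}
R^{rn}_k=\Bigl(Dh(\theta^*)-\int_0^1 Dh^{rn}\bigl(\theta^{*,rn}+(1-\lambda)(\theta^{rn}_k-\theta^{*,rn})\bigr)\,d\lambda\Bigr)(\theta^{rn}_k-\theta^{*,rn}),
\end{align*}
which is then controlled by $C\bigl(\max_r\|Dh(\theta^*)-Dh^{rn}(\theta^*)\|+|\theta^{rn}_k-\theta^{*,rn}|\bigr)|\theta^{rn}_k-\theta^{*,rn}|$ and Lemma~\ref{stepseq:tech:lemme}. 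Your route therefore requires an extra step, the replacement of $\Pi^{rn}_{p+1,M}$ by $\Pi_{p+1,M}$, which you flag but do not actually carry out: pointwise locally uniform convergence of $Dh^n$ is not enough since the difference is a product of up to $M$ factors. What is needed is a telescoping decomposition of $\Pi^{rn}_{p+1,M}-\Pi_{p+1,M}$, exploiting the exponential decay of both resolvents (available uniformly in $n$ by \A{HUA}) to get a bound proportional to $\|A^{rn}-Dh(\theta^*)\|\cdot e^{-c\sum_{j>p}\gamma_j}\sum_{j>p}\gamma_j$, and then a discrete Laplace-type estimate to verify that the resulting contribution to the variance sum is $O(\phi_1^R(n)\gamma(M))$, hence $\phi_1^R(n)\gamma^{1/2}(M)$ in $L^1$. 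This is doable but is precisely the delicate calculation the paper's choice of frozen matrix sidesteps; apart from that missing argument, your proposal is sound and equivalent to the paper's proof.
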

\begin{proof}
We define for all $p\geq1$, $\Delta M^{rn}_{p} := h^{rn}(\theta^{rn}_{p-1})-H(\theta^{rn}_{p-1},(U^{rn})^{p})= \E[\left.H(\theta^{rn}_{p-1},(U^{rn})^{p})\right| \mathcal{F}_{p-1}]-H(\theta^{rn}_{p-1},(U^{rn})^{p})$. Recalling that $((U^{n}, U^{2n}, \cdots, U^{rn}, \cdots, U^{Rn})^{p})_{p \in \leftB1,M\rightB}$ is a sequence of i.i.d. random variables we have that $(\Delta M^{rn}_p)_{p\geq1}$, $r\in \leftB1,R\rightB$, are sequences of martingale increments w.r.t. the natural filtration of the stochastic approximation schemes $\mathcal{F}:=(\mathcal{F}_p:=\sigma(\theta^{rn}_0, (U^{rn})^{1}, \cdots, (U^{rn})^{p}, r=1, \cdots, R); p \geq1)$. Using Taylor's formula we get for $p\geq 0$ and $r \in \leftB1, R\rightB$
\begin{align*}
\theta^{rn}_{p+1} - \theta^{*,rn} & = \theta^{rn}_p - \theta^{*,rn} - \gamma_{p+1} h^{rn}(\theta^{rn}_p) + \gamma_{p+1} \Delta M^{rn}_{p+1} \\
& = \theta^{rn}_p - \theta^{*,rn} - \gamma_{p+1} Dh(\theta^{*}) (\theta^{rn}_p-\theta^{*,rn})  \\
&  + \gamma_{p+1} \left( Dh(\theta^*) - \int_0^1 d\lambda Dh^{rn}(\theta^{*,rn}+(1-\lambda)(\theta^{rn}_p-\theta^{*,rn}))\right) (\theta^{rn}_p-\theta^{*,rn}) + \gamma_{p+1} \Delta M^{rn}_{p+1}.
\end{align*}

Hence by a simple induction argument one has for $(r,M) \in \leftB1,R\rightB \times \N^{*}$
\begin{align}
\label{first:rec}
\theta^{rn}_{M} - \theta^{*,rn} & = \Pi_{1,M} (\theta^{rn}_0 - \theta^{*,rn}) + \sum_{k=1}^{M} \gamma_k \Pi_{k+1,M} \Delta M^{rn}_{k} + 
\sum_{k=1}^{M} \gamma_k \Pi_{k+1,M} R^{rn}_{k-1} 
\end{align}

\noindent where $R^{rn}_k = \left( Dh(\theta^*) - \int_0^1 d\lambda Dh^{rn}(\theta^{*,rn}+(1-\lambda)(\theta^{rn}_k-\theta^{*,rn}))\right) (\theta^{rn}_k-\theta^{*,rn})$ and $\Pi_{k,M}:= \prod_{j=k}^M (I_d-\gamma_j Dh(\theta^*))$, with the convention that $\Pi_{M+1,M}=I_d$. Multiplying \eqref{first:rec} on the left by $\bold{w}_r$ given by \eqref{sol:vandermonde:syst} and summing w.r.t $r$ lead to
\begin{align}
\label{sec:rec}
-\mathcal{E}^{R-R}_S & = \Pi_{1,M} \left(\sum_{r=1}^{R} \bold{w}_r(\theta^{rn}_0 - \theta^{*,rn}) \right) + \sum_{k=1}^{M} \gamma_k \Pi_{k+1,M} \left(\sum_{r=1}^{R}\bold{w}_r \Delta M^{rn}_{k}\right) + 
\sum_{k=1}^{M} \gamma_k \Pi_{k+1,M} \left(\sum_{r=1}^{R} \bold{w}_r R^{rn}_{k-1} \right)
\end{align}

Ought to the Minkowski inequality it is sufficient to bound the $L^{1}(\P)$-norm of each term in the above decomposition. First, since $-Dh(\theta^*)$ is a Hurwitz matrix, $\forall \lambda \in [0, \underline{\lambda})$, there exists $C>0$ such that for any $k\leq n$, $\| \Pi_{k,n} \| \leq C \prod_{j=k}^{n} (1-\lambda \gamma_j) \leq C \exp(-\lambda \sum_{j=k}^n \gamma_j)$. We refer to \cite{Duflo1996} and \cite{ben:met:pri} for more details. Hence, one has for all $\eta \in (0,\underline{\lambda})$
$$
\E[|\Pi_{1,M} (\sum_{r=1}^{R} \bold{w}_r(\theta^{rn}_0 - \theta^{*,rn}) ) |] \leq ||\Pi_{1,M}|| \E[| \sum_{r=1}^{R} \bold{w}_r(\theta^{rn}_0 - \theta^{*,rn}) |] \leq C e^{-(\underline{\lambda}-\eta)\sum_{k=1}^M \gamma_k} \E[| \sum_{r=1}^{R} \bold{w}_r(\theta^{rn}_0 - \theta^{*,rn}) |].
$$

\noindent where $||.||$ stands for the matrix norm on $\R^d \otimes \R^d$. For the second term, recalling that $\sum_{r=1}^{R}\bold{w}_r \Delta M^{rn}_{k}$ is a martingale increment, one has
\begin{align}
\label{mart:term:statist:err}
\E\left[ \left| \sum_{k=1}^{M} \gamma_k \Pi_{k+1,M} (\sum_{r=1}^{R}\bold{w}_r \Delta M^{rn}_{k})\right|^2\right]^{1/2} & \leq \left( \sum_{k=1}^{M} \gamma^{2}_k ||\Pi_{k+1,M}||^2 \E\left[ \left|\sum_{r=1}^{R}\bold{w}_r \Delta M^{rn}_{k}\right|^2\right]\right)^{1/2} .
\end{align}

Similarly for the last term, one has
\begin{align}
\label{remainder:term}
\E\left[ \left| \sum_{k=1}^{M} \gamma_k \Pi_{k+1,M} \left(\sum_{r=1}^{R} \bold{w}_r R^{rn}_{k-1} \right) \right| \right] & \leq \sum_{k=1}^M \gamma_k ||\Pi_{k+1,M}|| \E\left| \sum_{r=1}^{R} \bold{w}_r R^{rn}_{k-1} \right|.
\end{align}

We now study the limit of each bound as $n$ and $M$ go to infinity. For the first term, observe that $\sum_{r=1}^{R} \bold{w}_r (\theta^{rn}_0-\theta^{*,rn}) \overset{\P}{\longrightarrow} \sum_{r=1}^{R} \bold{w}_r (\theta_0-\theta^{*}) =  \theta_0-\theta^{*}$ as $n\rightarrow + \infty$. Moreover, since $\sup_{n\geq1}\E|\theta^{n}_0|^{2} < + \infty$, by uniform integrability one has $\E|\sum_{r=1}^{R} \bold{w}_r (\theta^{rn}_0-\theta^{*,rn})| \rightarrow \E|\theta_0-\theta^{*}|$ as $n\rightarrow + \infty$. If $\gamma(p)=\gamma_0/p$ we select $\eta$ such that $2(\underline{\lambda}-\eta)\gamma_0 >1$ otherwise we set $\eta< \underline{\lambda}$ which implies that $\exp(-(\underline{\lambda}-\eta) \sum_{j=k}^n \gamma_j) = \gamma^{1/2}(M) \phi^{R}_2(M)$ with $\phi^{R}_2(M)\rightarrow 0$ as $M\rightarrow +\infty$. Hence we get
$$
\E[|\Pi_{1,M} (\sum_{r=1}^{R} \bold{w}_r(\theta^{rn}_0 - \theta^{*,rn}) ) |] \leq C \gamma^{1/2}(M) \phi^{R}_2(M).
$$

\noindent Let us now study the second term. Define for $k\geq1$, $\Delta N^{rn}_k = h^{rn}(\theta^{*}) - H(\theta^{*},(U^{rn})^{k})$ then by the Cauchy-Schwarz inequality and \A{HRG} one has 
\begin{align*}
\left|\E\left[ \left|\sum_{r=1}^{R}\bold{w}_r \Delta M^{rn}_{k}\right|^2\right] - \E\left[ \left|\sum_{r=1}^{R}\bold{w}_r \Delta N^{rn}_{k}\right|^2\right] \right| & \leq C_R \left(\sum_{r=1}^{R}||\bold{w}_r|| \E\left[\left|  \Delta M^{rn}_{k} - \Delta N^{rn}_{k} \right|^{2}\right] \right) ^{1/2} \\
& \times \left(\E[|H(\theta^{rn}_{k-1}, (U^{rn})^k)|^2]^{1/2} + \E[|H(\theta^*, (U^{rn})^k)|^2]^{1/2} \right) \\
& \leq C_R \max_{1\leq r\leq R}\E[|\theta^{rn}_{k-1} - \theta^{*}|^{2a}]^{1/2} \\
& \leq C_R (\gamma^{a/2}_{k} +  n^{-a \alpha} )
\end{align*}

\noindent where we used lemma \ref{sstrongerror:tech:lemme} and $\max_{1\leq r \leq R} |\theta^{*,rn} - \theta^{*}| \leq C n^{-\alpha}$ for the last inequality. Now observe that $\E\left[\left| \sum_{r=1}^R \bold{w}_r \Delta N^{rn}_k \right|^{2}\right] = \E\left[\left| \sum_{r=1}^R \bold{w}_r (h^{rn}(\theta^{*}) - H(\theta^{*}, U^{rn}) ) \right|^{2}\right]$ so  that using \A{HC2} and $U^{rn} \overset{\P}{\longrightarrow} U^{r}$ as $n\rightarrow + \infty$, one has $\sum_{r=1}^R \bold{w}_r (h^{rn}(\theta^{*}) - H(\theta^{*}, U^{rn}) )  \overset{\P}{\longrightarrow} -\sum_{r=1}^R \bold{w}_r  H(\theta^{*}, U^{r})$ as $n\rightarrow + \infty$. From \A{HUI} we deduce the $L^{2}$-uniform integrability of the family $\left\{\sum_{r=1}^R \bold{w}_r (h^{rn}(\theta^{*}) - H(\theta^{*}, U^{rn}) ), n\geq1\right\}$ which yields 
$$
\E\left[\left| \sum_{r=1}^R \bold{w}_r \Delta N^{rn}_k \right|^{2}\right] \longrightarrow \E\left[\left|\sum_{r=1}^R \bold{w}_r  H(\theta^{*}, U^{r})\right|^2\right], \ \ n\rightarrow + \infty.
$$

Plugging the above estimates into \eqref{mart:term:statist:err}, we derive the following bound
\begin{align}
\E\left[ \left| \sum_{k=1}^{M} \gamma_k \Pi_{k+1,M} (\sum_{r=1}^{R}\bold{w}_r \Delta M^{rn}_{k})\right|^2\right]^{1/2} & \leq \E\left[\left|\sum_{r=1}^R \bold{w}_r  H(\theta^{*}, U^{r})\right|^2\right]^{1/2}  \left(\sum_{k=1}^{M} \gamma^{2}_k ||\Pi_{k+1,M}||^2 \right)^{1/2} (1+ \phi^{R}_1(n)) \\
& + C_R \left(\sum_{k=1}^{M} \gamma^{2}_k \gamma^{a/2}_k ||\Pi_{k+1,M}||^2 \right)^{1/2},  \nonumber
\end{align}

\noindent with $\phi^{R}_1(n) \rightarrow 0$ as $n\rightarrow + \infty$. Using lemma \ref{stepseq:tech:lemme}, we successively derive that $\left(\sum_{k=1}^{M} \gamma^{2}_k ||\Pi_{k+1,M}||^2 \right)^{1/2} \leq C \gamma^{1/2}(M)$ for some positive constant $C(\gamma,\underline{\lambda})$ and $ \left(\sum_{k=1}^{M} \gamma^{2}_k \gamma^{a/2}_k ||\Pi_{k+1,M}||^2 \right)^{1/2}=o(\gamma^{1/2}(M))=\gamma^{1/2}(M) \phi^{R}_2 (M)$ as $M\rightarrow + \infty$. We now focus on the last term. Let us first observe that using \A{H-R} and since $Dh^{rn}$ is Lipschitz (uniformly in $n$) one has
\begin{align*}
\left| R^{rn}_k  \right| & = \left| \left( Dh(\theta^*) -Dh^{rn}(\theta^*) + \int_0^1 d\lambda \left(Dh^{rn}(\theta^*) - Dh^{rn}(\theta^{*,rn}+(1-\lambda)(\theta^{rn}_k-\theta^{*,rn}))\right) \right)(\theta^{rn}_k-\theta^{*,rn}) \right| \\
& \leq C \left( \max_{1 \leq r \leq R} ||Dh(\theta^*) -Dh^{rn}(\theta^*)|| + |\theta^{rn}_k - \theta^{*,rn}| \right) |\theta^{rn}_k - \theta^{*,rn}|
\end{align*}

\noindent so that plugging this estimate in \eqref{remainder:term} and using lemma \ref{sstrongerror:tech:lemme} lead to 
$$
\E\left[ \left| \sum_{k=1}^{M} \gamma_k \Pi_{k+1,M} \left(\sum_{r=1}^{R} \bold{w}_r R^{rn}_{k-1} \right) \right| \right]  \leq C \left( \sum_{k=1}^M (\gamma^{3/2}_k \max_{1 \leq r \leq R} ||Dh(\theta^*) -Dh^{rn}(\theta^*)|| + \gamma^2_k)  ||\Pi_{k+1,M}||  \right).
$$

Finally lemma \ref{stepseq:tech:lemme} and since $\max_{1 \leq r \leq R} ||Dh(\theta^*) -Dh^{rn}(\theta^*)|| \rightarrow 0$ as $n\rightarrow + \infty$ also imply 
$$
\max_{1 \leq r \leq R} ||Dh(\theta^*) -Dh^{rn}(\theta^*)||  \left(\sum_{k=1}^M \gamma^{3/2}_k  ||\Pi_{k+1,M}||\right) \leq C \gamma^{1/2}(M)\phi^{R}_1(n)
$$

\noindent and applying again Lemma \ref{stepseq:tech:lemme} with $a=1/2$ and $v_k=\gamma^{1/2}_k$, one has:
$$
\sum_{k=1}^M  \gamma^2_k  ||\Pi_{k+1,M}|| = o(\gamma^{1/2}(M)) = \gamma^{1/2}(M) \phi^{R}_2(M).
$$
\end{proof}

From the previous computations we are naturally led to consider the following suboptimal computational cost optimization problem
\begin{equation}
\label{subopt:problem}
(n(\epsilon),M(\epsilon)) = \argmin_{\mu_R n^{-\alpha R} \left(1+ |\epsilon_{R+1}(n)|\right) + \nu_R \gamma^{1/2}(M) (1+\phi^{R}_1(n)+\phi^{R}_2(M)) \leq  \varepsilon} \textnormal{Cost(R-R)}
\end{equation}

\noindent where $\mu_R = \frac{|C_R|}{R!^{ \alpha}}$ and $\nu_R = C \E\left[\left|\sum_{r=1}^{R} \bold{w}_r H(\theta^*,U^{r})\right|^2\right]^{1/2}$.

\begin{PROP}(Computational cost optimization)
\label{sub:opt:res}
Let $R\in \N^{*}$. Suppose that the assumptions of Proposition \ref{statistical:error} are satisfied. Suppose that the step sequence $\gamma$ is given by: $\gamma(p)=\gamma_0/p^{\beta}$, $\gamma_0>0$, $p>0$, $\beta \in (1/2,1]$. The multi-step Richardson-Romberg SA estimator of order $R$ satisfies
$$
\inf_{\mu_R n^{-\alpha R} \left(1+ |\epsilon_{R+1}(n)|\right) + \nu_R \gamma^{1/2}(M) (1+\phi^{R}_1(n)+\phi^{R}_2(M)) \leq  \varepsilon} \textnormal{Cost(R-R)} \sim  K \frac{R(R+1)}{2} \gamma^{\frac{1}{\beta}}_0 \nu^{\frac{2}{\beta}}_R \mu^{\frac{1}{\alpha R}}_R \frac{1}{\varepsilon^{\frac{2}{\beta}+\frac{1}{\alpha R}}} \left( 1+ \frac{2\alpha R}{\beta}\right)^{\frac{1}{\alpha R}} \left(1+ \frac{\beta}{2\alpha R}\right)^{\frac{2}{\beta}} 
$$

\noindent as $\varepsilon \rightarrow 0$. Eventually this asymptotically optimal bound may be achieved with parameters satisfying:
\begin{equation}
\label{asympt:opt:param}
n(\varepsilon ) \sim \left(\frac{2 \alpha R}{\beta}  + 1 \right)^{\frac{1}{\alpha R}}  \mu^{\frac{1}{\alpha R}}_R \varepsilon^{-\frac{1}{\alpha R}} \ \ \mbox{and} \ \ M(\varepsilon) \sim  \gamma^{\frac{1}{\beta}}_0 \nu^{\frac{2}{\beta}}_R \left(1+ \frac{\beta}{2\alpha R}\right)^{\frac{2}{\beta}} \varepsilon^{-\frac{2}{\beta}} \ \ \mbox{as} \ \varepsilon \rightarrow 0.
\end{equation}

\end{PROP}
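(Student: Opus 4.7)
The plan is to reduce the constrained minimization to its leading-order asymptotic form, then solve a one-parameter convex optimization by elementary calculus. First I would observe that, since $\mu_R,\nu_R>0$ and $\gamma(M),n^{-\alpha R}\to 0$ must both be $O(\varepsilon)$ in order to meet the constraint, any admissible sequence $(n(\varepsilon),M(\varepsilon))$ that is close to optimal must satisfy $n(\varepsilon),M(\varepsilon)\to+\infty$ as $\varepsilon\to0$. Consequently $\epsilon_{R+1}(n)\to 0$, $\phi_1^R(n)\to 0$ and $\phi_2^R(M)\to 0$, so the factors $(1+|\epsilon_{R+1}(n)|)$ and $(1+\phi_1^R(n)+\phi_2^R(M))$ may be replaced by $1+o(1)$. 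Moreover, at an optimum, the constraint is saturated (since decreasing either $n$ or $M$ strictly decreases the cost), so asymptotically
\[
\mu_R n^{-\alpha R} + \nu_R \gamma_0^{1/2} M^{-\beta/2} \;\sim\; \varepsilon.
\]

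Next I would introduce a splitting parameter $t\in(0,1)$ by writing $\mu_R n^{-\alpha R}=t\varepsilon$ and $\nu_R\gamma_0^{1/2}M^{-\beta/2}=(1-t)\varepsilon$. Inverting gives
\[
n=\Bigl(\frac{\mu_R}{t\varepsilon}\Bigr)^{\!1/(\alpha R)},\qquad M=\Bigl(\frac{\nu_R\gamma_0^{1/2}}{(1-t)\varepsilon}\Bigr)^{\!2/\beta},
\]
so the product that appears in the cost becomes
\[
nM = \mu_R^{1/(\alpha R)} \nu_R^{2/\beta}\gamma_0^{1/\beta}\,\varepsilon^{-\,1/(\alpha R)-\,2/\beta}\,g(t),\qquad g(t):=t^{-1/(\alpha R)}(1-t)^{-2/\beta}.
\]
The asymptotic optimization thus reduces to minimizing the smooth, strictly convex, explicitly-one-dimensional function $g$ on $(0,1)$. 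Setting $g'(t)=0$ yields $2\alpha R\,t=\beta(1-t)$, hence $t^\ast=\beta/(2\alpha R+\beta)$ and $1-t^\ast=2\alpha R/(2\alpha R+\beta)$, which gives
\[
g(t^\ast)=\Bigl(1+\tfrac{2\alpha R}{\beta}\Bigr)^{1/(\alpha R)}\Bigl(1+\tfrac{\beta}{2\alpha R}\Bigr)^{2/\beta}.
\]

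Plugging $t^\ast$ back into the formulas for $n,M$ gives the asymptotic choices \eqref{asympt:opt:param}, and multiplying $nM$ by the factor $KR(R+1)/2$ appearing in $\textnormal{Cost(R-R)}$ yields the announced equivalent. The hard part is not the calculus but the verification that the three auxiliary terms $\epsilon_{R+1}(n)$, $\phi_1^R(n)$, $\phi_2^R(M)$ really are negligible at the optimum: one needs to show that any candidate $(n(\varepsilon),M(\varepsilon))$ achieving a cost close to the above infimum must drive these quantities to $0$, and conversely that the proposed $(n(\varepsilon),M(\varepsilon))$ is itself admissible for all sufficiently small $\varepsilon$. For the first point I would argue by contradiction: if $n(\varepsilon)$ stayed bounded along a subsequence, the implicit-bias contribution would stay of order $1$ and violate the constraint; if $M(\varepsilon)$ stayed bounded, the statistical contribution would likewise stay bounded below. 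For the second, monotonicity of $\phi_2^R$ together with $\phi_1^R(n),\epsilon_{R+1}(n)\to 0$ yields admissibility of the proposed $(n(\varepsilon),M(\varepsilon))$ up to a $(1+o(1))$ rescaling, which does not affect the asymptotic constant.
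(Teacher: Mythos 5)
Your proposal is correct and follows essentially the same route as the paper's proof: drop the auxiliary terms $(1+|\epsilon_{R+1}|)$ and $(1+\phi_1^R+\phi_2^R)$ to obtain a lower bound, saturate the constraint and reduce to a one-dimensional minimization (your $t$-parametrization is just a change of variable for the paper's direct optimization in $n$ after setting $M = \gamma^{-1}((\varepsilon-\mu_R n^{-\alpha R})^2/\nu_R^2)$), then establish the matching upper bound by exhibiting an admissible near-optimal pair using the monotonicity of $\phi_2^R$ and the decay of $\epsilon_{R+1}$ and $\phi_1^R$. The only place where you are slightly less explicit than the paper is the upper-bound construction: the paper inflates $M(\varepsilon)$ via $\gamma^{-1}$ applied to a quantity already containing the auxiliary factors, which makes admissibility immediate, whereas your ``$(1+o(1))$ rescaling'' gestures at the same mechanism without writing it out; this is a matter of exposition rather than a gap.
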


\begin{proof} Let us note that the cost minimization problem \eqref{subopt:problem} is lower-bounded by the more tractable problem
\begin{equation}
\label{lower:bound}
\inf_{\mu_R n^{-\alpha R}  + \nu_R \gamma^{1/2}(M) \leq  \varepsilon} \textnormal{Cost(R-R)} = \inf_{\mu_R n^{-\alpha R}  <  \varepsilon} K \gamma^{-1}\left( \frac{(\epsilon-\mu_R  n^{-\alpha R})^2}{\nu^2_R }  \right)n \frac{R(R+1)}{2}
\end{equation}

\noindent with $M = \gamma^{-1}\left( \frac{(\varepsilon - \mu_R n^{-\alpha R})^2}{\nu^2_R }\right)= \gamma^{1/\beta}_0 \nu^{2/ \beta}_R  (\varepsilon - \mu_R  n^{-\alpha R})^{-2/\beta}$. This optimization problem can be solved explicitly, more precisely the optimal parameters are given by
$$
n(\varepsilon) = \left(\frac{2 \alpha R}{\beta}+1  \right)^{\frac{1}{\alpha R}} \mu^{\frac{1}{\alpha R}}_R\varepsilon^{-\frac{1}{\alpha R}} , \ \ M(\varepsilon) = \gamma^{\frac{1}{\beta}}_0 \nu^{\frac{2}{ \beta}}_R \left(1+ \frac{\beta}{2\alpha R}\right)^{\frac{2}{\beta}} \varepsilon^{-\frac{2}{\beta}}.
$$

The "liminf" side of the result clearly follows by plugging this solution into \eqref{lower:bound}. Now set 
$$
n(\varepsilon) = \left(\frac{2 \alpha R}{\beta}+1  \right)^{\frac{1}{\alpha R}} \mu^{\frac{1}{\alpha R}}_R\varepsilon^{-\frac{1}{\alpha R}} , \ \ M(\varepsilon) =  \gamma^{-1}\left( \frac{(\varepsilon - \mu_R (1+|\varepsilon_{R+1}(n(\varepsilon))|) n^{-\alpha R}(\varepsilon))^2}{\nu^2_R \left(1+\frac{\beta}{2\alpha R}\right)^2 \left(1+\phi^{R}_1(n(\varepsilon))+\phi^{R}_2(\gamma^{\frac{1}{\beta}}_0 \nu^{\frac{2}{ \beta}}_R \left(1+ \frac{\beta}{2\alpha R}\right)^{\frac{2}{\beta}} \varepsilon^{-\frac{2}{\beta}})\right)^2}\right). 
$$

Since $\phi^{R}_2$ is non-increasing, the couple $(n(\varepsilon),M(\varepsilon))$ satisfies the constraint $\mu_R n^{-\alpha R} \left(1+ |\epsilon_{R+1}(n)|\right) + \nu_R \gamma^{1/2}(M) (1+\phi^{R}_1(n)+\phi^{R}_2(M)) \leq  \varepsilon$ so that  the cost minimization problem \eqref{subopt:problem} is upper-bounded by 
\begin{align*}
 K \frac{R(R+1)}{2}  \mu^{\frac{1}{\alpha R}}_R \varepsilon^{-\frac{1}{\alpha R}}& \left( 1+ \frac{2\alpha R}{\beta}\right)^{\frac{1}{\alpha R}} \gamma^{\frac{1}{\beta}}_0 \nu^{\frac{2}{\beta}}_R \left(1+ \frac{\beta}{2\alpha R}\right)^{\frac{2}{\beta}} \varepsilon^{-\frac{2}{\beta}} \left( 1 -  (1+|\varepsilon_{R+1}(n(\varepsilon))|) \frac{\beta}{2\alpha R +\beta}\right)^{\frac{2}{\beta}}  \\
 & \times \left(1+\phi^{R}_1(n(\varepsilon))+\phi^{2}_R(\gamma^{\frac{1}{\beta}}_0 \nu^{\frac{2}{ \beta}}_R \left(1+ \frac{\beta}{2\alpha R}\right)^{\frac{2}{\beta}} \varepsilon^{-\frac{2}{\beta}})\right)^{\frac{2}{\beta}}
\end{align*}

and the result follows by letting $\varepsilon$ goes to zero. 

 \end{proof}

\begin{REM}\textbf{(Choice of the step sequence)}
According to Proposition \ref{sub:opt:res}, it is optimal to set $\beta=1$ to achieve a minimal asymptotic complexity. In this case a constraint appear on $\gamma_0$: $2 \underline{\lambda} \gamma_0>1$. Let us note that for $\beta=1$ a simple computation shows that the constant $C$ appearing in $\nu_R$ is equal to $\gamma_0/(2 \underline{\lambda} \gamma_0 -1)^{1/2}$ which reaches its minimum (as a function of $\gamma_0$) at $\gamma_0= 1/\underline{\lambda}$. However the main drawback with this choice is that the constant $\underline{\lambda}$ is not known to the experimenter so that one is led to make a blind choice in practical implementation. 
\end{REM}

\begin{REM}\textbf{(Control of the variance)}\label{control:var:rem}
Let us note that when one decides to implement the Richardson-Romberg extrapolation SA scheme with an innovation satisfying $U^{r}= U$ a.s. $r=1, \cdots, R$ then one has $H(\theta^*,U^{r}) = H(\theta^*, U)$ $a.s.$ for every $r \in \leftB 1, R\rightB$ so that using \eqref{twoeq:vandermonde:syst} yields
$$
\E\left[\left| \sum_{r=1}^R \bold{w}_r H(\theta^*,U^{r})\right|^2\right] = \E\left[\left| (\sum_{r=1}^R \bold{w}_r) H(\theta^*, U)\right|^2\right] = \E\left[\left| H(\theta^*, U)\right|^2\right].
$$

Hence we clearly see that this choice leads to a control in the $L^{1}$-norm of the statistical error of the multi-step Richardson-Romberg SA estimator. On the opposite considering mutually independent innovations $U^{r}$ lead to an explosion of the previous control with respect to $R$. Indeed one has 
\begin{align*}
\E\left[\left| \sum_{r=1}^R \bold{w}_r H(\theta^*,U^{r})\right|^2\right] & = \left(\sum_{r=1}^{R}  \frac{r^{2\alpha R}}{\prod_{j=0}^{r-1} (r^{\alpha}-j^{\alpha})^2 \prod_{j=r+1}^{R}  (j^{\alpha} - r^{\alpha})^2} \right) \E\left[\left| H(\theta^*, U )\right|^2\right] \\
& \geq \left(\frac{R^{ R}}{R!}\right)^{2\alpha} \E\left[\left| H(\theta^*, U)\right|^2\right] \\
& \sim \left( \frac{e^{R}}{\sqrt{2 \pi}\sqrt{R}} \right)^{2\alpha} \E\left[\left| H(\theta^*, U)\right|^2\right] \ \ \mbox{as} \ R\rightarrow + \infty,
\end{align*}

\noindent where we used \eqref{sol:vandermonde:syst} for the first equality.

For instance when one is concerned with the discretization of a Brownian diffusion, the first aforementioned case consists in implementing the Richardson-Romberg method with $R$ Euler schemes devised with the \emph{same Brownian motion} $W$ namely $W^{r} = W, \ \ r=1, \cdots, R$ whereas the second case consists in implementing the method with mutually independent Brownian motions $W^{r}$. The optimality of this choice is discussed in \cite{GPag:07}.

\end{REM}

\subsection{Comparison with the crude stochastic approximation estimator}
Under the assumptions of Proposition \ref{statistical:error} with $R=1$, the global error for the crude SA estimator satisfies
\begin{align*}
\E \left[ \left| \mathcal{E}_{glob}(M, \gamma, H) \right| \right] = \E\left[ \left| \theta^{*} - \theta^{*,n} + \theta^{*,n}- \theta^{n}_M \right|  \right] &  \leq \frac{|C_1|}{n^{\alpha}} (1+ |\varepsilon_1(n)|) +  \E\left[ \left|\theta^{*,n}- \theta^{n}_M \right| \right] \\
& \leq  \frac{|C_1|}{n^{\alpha}} \left(1+ |\epsilon_{1}(n)|\right) + C \E\left[\left| H(\theta^*,U)\right|^2\right]^{\frac12} \gamma^{\frac12 }(M) (1+\phi_1(n)+\phi_2(M)),
\end{align*}

\noindent with a computational cost given by $\textnormal{Cost(C-S)} := K M n $. Hence a similar result as in Proposition \ref{sub:opt:res} holds.
\begin{PROP}
\label{optim:prob:crude}
Assume that the assumptions of Proposition \ref{statistical:error} with $R=1$ hold. Suppose that the step sequence $\gamma$ is given by: $\gamma(p)=\gamma_0/p^{\beta}$, $\gamma_0>0$, $p>0$, $\beta \in (1/2,1]$. The crude SA estimator satisfies
$$
\inf_{ |C_1| n^{-\alpha } \left(1+ |\epsilon_{1}(n)|\right) + \nu_1  \gamma^{1/2}(M) (1+\phi_1(n)+\phi_2(M)) \leq  \varepsilon} \textnormal{Cost(C-S)} \sim K \gamma^{\frac{1}{\beta}}_0 \nu^{\frac{2}{\beta}}_1 |C_1|^{\frac{1}{\alpha }} \frac{1}{\varepsilon^{\frac{2}{\beta}+\frac{1}{\alpha }}} \left( 1+ \frac{2\alpha}{\beta}\right)^{\frac{1}{\alpha}} \left(1+ \frac{\beta}{2\alpha}\right)^{\frac{2}{\beta}} 
$$

\noindent as $\varepsilon \rightarrow 0$ with $\nu_1 = C \E\left[\left| H(\theta^*,U)\right|^2\right]^{\frac12} $. Eventually this asymptotically optimal bound may be achieved with parameters satisfying:
\begin{equation}
\label{asympt:opt:param:crude}
n(\varepsilon ) \sim \left(\frac{2 \alpha}{\beta}  + 1 \right)^{\frac{1}{\alpha}}  |C_1|^{\frac{1}{\alpha}} \varepsilon^{-\frac{1}{\alpha}} \ \ \mbox{and} \ \ M(\varepsilon) \sim  \gamma^{\frac{1}{\beta}}_0 \nu^{\frac{2}{\beta}}_1 \left(1+ \frac{\beta}{2\alpha }\right)^{\frac{2}{\beta}} \varepsilon^{-\frac{2}{\beta}} \ \ \mbox{as} \ \varepsilon \rightarrow 0.
\end{equation}
\end{PROP}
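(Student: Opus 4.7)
The plan is to mimic the proof of Proposition \ref{sub:opt:res} almost verbatim, specialized to $R=1$. The statement is the $R=1$ counterpart, with the weight $\mathbf{w}_1 = I_d$ (so $\mu_1 = |C_1|$ and the prefactor $R(R+1)/2$ equals $1$), and with cost $\textnormal{Cost(C-S)} = KMn$ instead of $KMn R(R+1)/2$. So the approach is a two-sided asymptotic sandwich: a lower bound obtained by relaxing the constraint to its ``deterministic part'', and a matching upper bound by exhibiting an explicit admissible pair $(n(\varepsilon),M(\varepsilon))$ and letting $\varepsilon \to 0$.

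First I would establish the lower bound. Since $|\epsilon_1(n)|$, $\phi_1(n)$, $\phi_2(M)$ are non-negative, the admissible set is contained in
\begin{equation*}
\left\{(n,M):\ |C_1| n^{-\alpha} + \nu_1\gamma^{1/2}(M)\leq \varepsilon\right\},
\end{equation*}
so it suffices to compute the infimum of $KMn$ on this larger set. With $\gamma(p)=\gamma_0/p^\beta$, the tight constraint gives $M = \gamma_0^{1/\beta}\nu_1^{2/\beta}(\varepsilon - |C_1|n^{-\alpha})^{-2/\beta}$, reducing the problem to a one-dimensional minimization in $n$ (equivalently, in the ratio $x = |C_1|n^{-\alpha}/\varepsilon \in (0,1)$) of $x^{-1/\alpha}(1-x)^{-2/\beta}$. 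A direct calculation shows the minimizer is $x^* = \beta/(2\alpha+\beta)$, which yields precisely
\begin{equation*}
n(\varepsilon) = \left(\tfrac{2\alpha}{\beta}+1\right)^{1/\alpha}|C_1|^{1/\alpha}\varepsilon^{-1/\alpha},\qquad M(\varepsilon) = \gamma_0^{1/\beta}\nu_1^{2/\beta}\left(1+\tfrac{\beta}{2\alpha}\right)^{2/\beta}\varepsilon^{-2/\beta},
\end{equation*}
and after plugging back in gives the liminf direction with exactly the asserted constant $K\gamma_0^{1/\beta}\nu_1^{2/\beta}|C_1|^{1/\alpha}(1+2\alpha/\beta)^{1/\alpha}(1+\beta/(2\alpha))^{2/\beta}$.

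Next, for the upper bound I would fix $n(\varepsilon)$ as above and then define $M(\varepsilon)$ by inverting the full constraint, i.e.
\begin{equation*}
M(\varepsilon) = \gamma^{-1}\!\left(\frac{\bigl(\varepsilon - |C_1|(1+|\epsilon_1(n(\varepsilon))|)n(\varepsilon)^{-\alpha}\bigr)^2}{\nu_1^2\bigl(1+\phi_1(n(\varepsilon))+\phi_2(\bar M(\varepsilon))\bigr)^2}\right),
\end{equation*}
where $\bar M(\varepsilon)$ denotes the asymptotically optimal $M$ from the lower bound; the monotonicity of $\phi_2$ (built into \A{HS} through Proposition \ref{statistical:error}) is what makes this implicit definition admissible, exactly as in the proof of Proposition \ref{sub:opt:res}. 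Then $(n(\varepsilon),M(\varepsilon))$ satisfies the true constraint by construction, and one reads off
\begin{equation*}
KM(\varepsilon)n(\varepsilon) \;\leq\; K|C_1|^{1/\alpha}(1+\tfrac{2\alpha}{\beta})^{1/\alpha}\gamma_0^{1/\beta}\nu_1^{2/\beta}(1+\tfrac{\beta}{2\alpha})^{2/\beta}\,\varepsilon^{-2/\beta-1/\alpha}\cdot (1+o(1)),
\end{equation*}
using $\epsilon_1(n(\varepsilon))\to 0$, $\phi_1(n(\varepsilon))\to 0$ as $\varepsilon\to 0$ (since $n(\varepsilon)\to\infty$) and $\phi_2(M(\varepsilon))\to 0$ (since $M(\varepsilon)\to\infty$). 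Combining both sides produces the sharp asymptotics and identifies $(n(\varepsilon),M(\varepsilon))$ as in \eqref{asympt:opt:param:crude}.

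Nothing here is really delicate; the only point that requires care is the second step, namely showing the candidate $M(\varepsilon)$ is well-defined and admissible. The potential obstruction is the self-referential appearance of $\phi_2(M(\varepsilon))$ on the right-hand side, which is why one first inserts the lower-bound scale $\bar M(\varepsilon)$ (using that $\phi_2$ is non-increasing) before passing to the limit. Once this is handled exactly as in Proposition \ref{sub:opt:res}, the remaining calculations are purely algebraic and reduce to the one-variable optimization above.
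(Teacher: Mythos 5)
Your proposal is correct and follows essentially the same approach as the paper: the paper itself gives no separate proof of Proposition \ref{optim:prob:crude}, simply stating that ``a similar result as in Proposition \ref{sub:opt:res} holds,'' and your argument is precisely that specialization to $R=1$, including the lower-bound relaxation, the one-variable optimization giving $x^*=\beta/(2\alpha+\beta)$, and the upper-bound construction that replaces the self-referential $\phi_2(M(\varepsilon))$ by $\phi_2(\bar M(\varepsilon))$ using the monotonicity of $\phi_2$.
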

\section{Application: Estimation of the quantile of a component of a SDE}\label{applic:sec}
In this section, we show how the previous results can be applied to the estimation of the quantile of a stochastic process solution to a stochastic differential equation.
Also, when the exact value of a constant is not important we may repeat the same symbol for constants that may change from one line to next. 

\subsection{Notations and Hypotheses.}
Let $(\Omega,\F,(\F_t)_{t\ge 0},\P) $ be a filtered probability space satisfying the usual conditions and $(Z_t)_{t\ge 0} $ be a $d$-dimensional $(\F_t)_{t\ge 0} $ symmetric $\alpha$-stable process, for $\alpha\in (0,2]$, that is a c\`adl\`ag process with independent and stationary increments with the scaling property $Z_{ct} \overset{(d)}{=} c^{1/\alpha}Z_{t}$. 
Note that the case $\alpha = 2$ corresponds to the standard Brownian motion. It is also the only case where $Z$ is a continuous process.
When $\alpha<2$, the Stable process is discontinuous and its L\'evy-Khintchine exponent writes for all $p\in\R^d $,
$$
\E\left( e^{i\langle p,Z_t \rangle}\right)= \exp\left( -t \int_{S^{d-1}} |\langle p,\vartheta\rangle|^\alpha \mu(d\vartheta)\right).
$$
We refer to the measure $\mu$ as the \textit{spectral measure} of $Z$.
It is related to the L\'evy measure of the process $Z$ as follows.
Denote $\nu$ the L\'evy measure of $Z$, $\nu$ factorizes in $\nu(dz) = C_\alpha \frac{d|z|}{|z|^{1+\alpha}} \mu(\bar{z})$, where $z=(|z|,\bar{z}) \in \R_+ \times S^{d-1}$ stands for the polar coordinates.
For the exact value of $C_\alpha$, we refer to Sato \cite{sato}.
Let us  consider a $d$-dimensional process $(X_t)_{t\ge 0}=(X_t^1,\dots,X_t^d)_{t\ge 0} $ with dynamics:
\begin{equation}
\label{EqDfSt}
X_t=x+\int_0^t b(X_{s-})ds+\int_0^t \sigma(X_{s-})dZ_s,
\end{equation}

\noindent where $b: \R^d \rightarrow \R^d$ and $\sigma: \R^d \rightarrow \R^d \otimes \R^d$. We fix the time horizon $T=1$. Let us denote by $\P_x$ (resp. $\P_{t,x}$, $t \in (0,1]$) the conditional probability given $\left\{X_0 = x \right\}$ (resp. $\left\{X_t = x \right\}$). For a given level $\ell \in (0,1)$, we are interested in the computation of the quantile at level $\ell$ of the random variable $X^{d}_1$ defined as:
$$
\theta^* = \inf \{ \theta \in \R : \P_x(X_1^d \le \theta) \ge \ell \}.
$$

%
Since $\lim_{\theta \rightarrow + \infty} \P_x(X^{d}_1 \leq \theta) = 1$, we have $\{ \theta \in \R : \P_x(X_1^d \le \theta) \ge \ell \} \neq  \emptyset$. Moreover, we have $\lim_{\theta \rightarrow - \infty} \P_x(X^{d}_1 \leq \theta) = 0$, which implies that $\{ \theta \in \R : \P_x(X_1^d \le \theta) \ge \ell \} $ is bounded from below so that $\theta^*$ always exists. Assuming that the distribution of $X^{d}_1$ has no atoms, the quantile at level $\ell$ is the lowest solution of the equation:
$$
\P_x(X^{d}_1 \leq \theta ) = \ell.
$$

If the distribution function is (strictly) increasing, which is notably the case if the process $X$ solution of \eqref{EqDfSt} admits a positive density $p(1,x,.)$, the solution to the above equation is unique, otherwise, there may be more than one solution. Now since the law of $X^{d}_1$ is not known explicitly, the quantile $\theta^*$ cannot be computed and one has to approximate the dynamics by a discretization scheme that can be simulated. Let us note that the estimation of the quantile of a component of a Brownian diffusion process has already been investigated in \cite{Talay2004}. For a given time step $\Delta = \frac{1}{n}$, $n \in \N^{*}$, setting for all $i \in \N$, $t_i = i \Delta$, we consider the standard Euler scheme defined as follows:
\begin{equation}\label{EULER_EDS}
X_t^n =x+ \int_0^t b(X_{\phi(s)}^n) ds  + \int_0^t \sigma(X_{\phi(s)}^n) dZ_s \ \ \phi(s) = \sup\left\{t_i: \ t_i \leq s\right\}.
 \end{equation}

Then one approximates $\theta^*$ by $\theta^{*,n}$ the quantile at level $\ell$ of $X^{n,d}_1$.  We denote by \textbf{[A]} the following set of assumptions.
Fix an integer $m\in \N$ which will hereafter refer to the regularity of the coefficients. 
\begin{itemize}
\item[\textbf{[A-1]}] $b \in \mathcal C^m(\R^d,\R^d)$ and $\sigma \in \mathcal C^m(\R^d,\R^d\otimes \R^d)$ with bounded derivatives.
Also, when $\alpha \le 1$, we put $b=0$.
\item[\textbf{[A-2]}] When $\alpha<2$ for all $x,\xi \in \R^d$, there exists $C>1$ such that:
\begin{equation*}
C^{-1} |\xi|^2\le \langle \xi , \sigma(x) \xi \rangle \le C|\xi|^2.
\end{equation*}

\noindent When $\alpha=2$, setting $\Sigma(x)=\sigma(x)\sigma(x)^{T}$, for all $x,\xi \in \R^d$, there exists $C>1$ such that:
\begin{equation*}
C^{-1} |\xi|^2\le \langle \xi , \Sigma(x) \xi \rangle \le C|\xi|^2.
\end{equation*}

\item[\textbf{[A-3]}] When $\alpha <2$, the spectral measure $\mu$ has a $\mathcal C^m(S^{d-1})$ surface density and satisfies:
for all $\xi\in \R^q$, there exists $C>1$ such that:
\begin{equation}\label{ND_spectral_measure}
C^{-1} |\xi|^\alpha \le \int_{S^{d-1}}|\langle \xi, \vartheta \rangle |^\alpha \mu(d\vartheta) \le C |\xi|^\alpha.
\end{equation}
\end{itemize}

\begin{PROP}\label{exist:dens:prop}
Assume that $\alpha \in (0,2]$ and that \textbf{[A]} is in force. For every $t>0$, the solutions $X_t$, $X_t^n$, of the SDE \eqref{EqDfSt} and \eqref{EULER_EDS} have a strictly positive densities with respect to the Lebesgue measure.
Consequently, the quantile is uniquely defined.
Moreover, those densities are in $\mathcal C^m(\R^d,\R^d)$ if $\alpha > 1$, and in $\mathcal C^{m-1}(\R^d,\R^d)$ when $\alpha \le 1$.
\end{PROP}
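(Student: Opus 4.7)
The plan is to realise both densities through a parametrix expansion and to read regularity, positivity, and uniqueness of the quantile directly from the representation. Fix $\xi\in\R^d$ and consider the frozen process $\tilde X^\xi_t = x + tb(\xi) + \sigma(\xi)Z_t$. Under \textbf{[A-2]} and \textbf{[A-3]} its characteristic exponent is uniformly elliptic in the $\alpha$-stable sense, so $\tilde X^\xi_t$ admits a $\mathcal{C}^\infty$ density $\tilde p^\xi(t,x,y)$ with two-sided heat-kernel bounds
\begin{equation*}
\tilde p^\xi(t,x,y) \asymp t^{-d/\alpha}\, q\!\left(\frac{y-x-tb(\xi)}{t^{1/\alpha}}\right),
\end{equation*}
for a strictly positive integrable profile $q$, together with derivative controls $|\partial_y^\beta \tilde p^\xi(t,x,y)| \le C\, t^{-|\beta|/\alpha}\,\bar q(\cdot)$. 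These estimates are classical for $\alpha=2$ and due to Kolokoltsov for $\alpha<2$.

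Next, setting $\tilde p(t,x,y):=\tilde p^y(t,x,y)$ and introducing the parametrix kernel $K(t,x,y) := (L_x - \tilde L^y_x)\tilde p^y(t,x,y)$, where $L$ is the generator of \eqref{EqDfSt} and $\tilde L^y$ is that of $\tilde X^y$, I would construct the density of $X_t$ as
\begin{equation*}
p(t,x,y)=\sum_{k\ge 0}\tilde p\otimes K^{(\otimes k)}(t,x,y),
\end{equation*}
with $\otimes$ denoting space--time convolution. Under \textbf{[A-1]}, the operator $L-\tilde L^y$ is first order (zero order when $\alpha\le 1$, since the hypothesis forces $b\equiv 0$ in that range), with coefficients vanishing at $x=y$; exploiting this cancellation yields $|K(t,x,y)| \le C\, t^{-1+\eta/\alpha}\bar q(\cdot)$, with $\eta=1$ for $\alpha>1$ and $\eta=\alpha$ for $\alpha\le 1$, which ensures convergence of the series. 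Each $y$-derivative pulled inside the iteration costs a factor $t^{-1/\alpha}$ from $\partial_y\tilde p$, and the resulting time singularity remains integrable through the convolutions up to order $m$ when $\alpha>1$ but only up to order $m-1$ when $\alpha\le 1$. Keeping track of this accumulated singularity is the \emph{main technical obstacle} and is precisely what produces the dichotomy in the regularity statement.

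The construction transfers to the Euler scheme $X_1^n$ via the discrete parametrix of Konakov--Menozzi type: the coefficients are frozen at each discretisation time $t_i$ and an analogous expansion is obtained with bounds uniform in $n$ and identical regularity. Strict positivity is then read off the leading term of the series: for $t$ small enough the remainder $\sum_{k\ge 1}\tilde p\otimes K^{(\otimes k)}$ is absorbed by a fraction of $\tilde p$, so $p(t,x,y) \ge \tfrac12 \tilde p(t,x,y)>0$, and a Chapman--Kolmogorov chaining propagates the lower bound up to $t=1$. Marginalising in the last coordinate yields a strictly positive density on $\R$ for $X_1^d$ (resp.\ $X_1^{n,d}$); the distribution function is therefore continuous and strictly increasing, so the equation $\P_x(X_1^d\le \theta)=\ell$ admits a unique solution, namely $\theta^*$, and likewise for $\theta^{*,n}$.
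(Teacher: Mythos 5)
The paper does not actually prove this proposition: it defers to Kolokoltsov \cite{kolo:00} for the stable case and to Friedman and Lemaire--Menozzi for the Gaussian case, and your parametrix sketch is exactly the argument of those references --- it is also the very machinery the paper itself develops later in Theorems~\ref{parametrix_series}--\ref{parametrix_euler} and Lemma~\ref{LemmeDerivDens}. Your outline is therefore the intended route and is correct in substance, with only minor imprecisions: for $\alpha=2$ the perturbation $L-\tilde L^*$ is second-order rather than first-order (though its coefficients do vanish on the diagonal, which is what delivers the integrable singularity), and the $\mathcal C^m$ versus $\mathcal C^{m-1}$ dichotomy is governed by the smoothing exponent $\omega=\alpha\wedge\tfrac{1}{\alpha}$ of the iterated kernel rather than merely by the $t^{-1/\alpha}$ cost of a single $y$-derivative, even though the net effect is as you describe.
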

 
 We refer to the work of Kolokoltsov \cite{kolo:00} for the proof in the Stable case, who also derived Aronson's estimates with time singularity depending on the index $\alpha$. In the Brownian case, i.e. $\alpha = 2 $, if the drift $b$ is a measurable bounded function and the diffusion coefficient $\sigma$ is $\eta$-H\"older continuous, $\eta>0$, and satisfies \textbf{[A-2]} then the aforementioned densities exists, are positive and satisfy Gaussian Aronson's estimates (see e.g. \cite{friedman:64} and \cite{lem:men:2010} for the density of the Euler scheme).
 
\begin{PROP}\label{conv:quantile:prop} For $\alpha \in (0,2)$ assume that \textbf{[A]} for $m\ge2$. For $\alpha = 2$, assume the drift $b$ and the diffusion coefficient $\sigma$ are Lipschitz-continuous bounded functions and that $\sigma$ satisfies \textbf{[A-2]}.Then one has
$$
\theta^{*,n} \rightarrow \theta^*, \ n\rightarrow +\infty.
$$
\end{PROP}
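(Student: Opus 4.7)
The plan is to cast the quantile problem in the stochastic approximation framework of Section \ref{stat:problem:sec} and then apply Proposition \ref{prop:conv:disc}. Set
$$
h(\theta) := \P_x(X_1^d \le \theta) - \ell = F(\theta) - \ell, \qquad h^n(\theta) := \P_x(X_1^{n,d} \le \theta) - \ell = F_n(\theta) - \ell,
$$
so that $\theta^\ast$ and $\theta^{\ast,n}$ are precisely the zeros of $h$ and $h^n$. It then suffices to verify the two hypotheses of Proposition \ref{prop:conv:disc}: (i) the mean-reverting property for $h$ and $h^n$, and (ii) the locally uniform convergence $h^n \to h$.

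For (i), Proposition \ref{exist:dens:prop} (or its Brownian counterpart under the Lipschitz/uniform-ellipticity assumptions when $\alpha = 2$) ensures that both $X_1^d$ and $X_1^{n,d}$ admit strictly positive densities on $\R$. Hence $F$ and $F_n$ are continuous and strictly increasing, which immediately yields
$$
(\theta-\theta^\ast)(F(\theta)-\ell) > 0 \quad \forall \theta \neq \theta^\ast, \qquad (\theta-\theta^{\ast,n})(F_n(\theta)-\ell) > 0 \quad \forall \theta \neq \theta^{\ast,n},
$$
which is the mean reverting assumption in dimension one.

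For (ii), the key input is the weak convergence $X_1^n \Rightarrow X_1$ as $n\to+\infty$, and in particular $X_1^{n,d} \Rightarrow X_1^d$. In the Brownian case ($\alpha = 2$) this is a standard consequence of the strong convergence of the Euler scheme under the stated Lipschitz assumptions. In the stable case ($\alpha < 2$), under assumption \textbf{[A]} with $m\ge 2$ this weak convergence can be invoked from the literature on Euler schemes for stable-driven SDEs (cf.\ \cite{kolo:00}). Weak convergence gives $F_n(\theta) \to F(\theta)$ at every continuity point of $F$; since $F$ is continuous on $\R$, pointwise convergence holds everywhere and by a classical Pólya-type argument (pointwise convergence of monotone functions to a continuous monotone limit) this convergence is in fact uniform on $\R$. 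A fortiori $h^n\to h$ locally uniformly.

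With (i) and (ii) in hand, Proposition \ref{prop:conv:disc} applies and gives $\theta^{\ast,n}\to\theta^\ast$. The only genuinely non-trivial step is the weak convergence of the Euler scheme in the pure-jump stable regime; once this is taken from the existing literature, everything else reduces to elementary properties of distribution functions and the Pólya-type uniformization of pointwise convergence.
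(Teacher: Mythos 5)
Your proof is correct, but it follows a genuinely different route from the paper's. You cast the problem in the stochastic-approximation framework by introducing $h(\theta) = F(\theta) - \ell$, $h^n(\theta) = F^n(\theta) - \ell$, verify the mean-reverting condition (via strict monotonicity of $F$ and $F^n$, i.e.\ positive densities from Proposition~\ref{exist:dens:prop}) and the locally uniform convergence $h^n \to h$ (via weak convergence plus the P\'olya argument), and then invoke the general Proposition~\ref{prop:conv:disc} as a black box. The paper instead gives a short direct argument: uniform convergence $F^n \to F$ (same P\'olya step) combined with $F^n(\theta^{*,n}) = \ell$ yields $F(\theta^{*,n}) \to \ell$, and since $F$ is a continuous strictly increasing bijection onto $(0,1)$ (positive density of $X^d_1$), $F^{-1}$ is continuous at $\ell$, so $\theta^{*,n} = F^{-1}(F(\theta^{*,n})) \to F^{-1}(\ell) = \theta^*$. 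Both rely on the same two analytic inputs --- weak convergence of the Euler scheme and strict positivity of the density --- but the paper's route avoids appealing to the abstract SA convergence result (which needs strict positivity of the Euler density $p_n$ to check mean-reversion for each $h^n$), whereas yours is more modular and makes explicit how the example fits the general framework of Section~\ref{stat:problem:sec}. Your approach is the more illuminating one pedagogically; the paper's is the more economical one technically.
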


\begin{proof} Let $n\in \N^{*}$ and denote by $F, \ F^{n}$ the distribution function of $X^{d}_1$ and $X^{n,d}_1$ respectively. Since $b$ and $\sigma$ are Lipschitz we know that $(X^{n,d}_1)_{n\geq1}$ converges in distribution to $X^{d}_1$. Moreover, the function $F$ is continuous so that $(F^{n})_{n\geq1}$ converges uniformly to $F$. Hence, we conclude that $F(\theta^{*,n}) \rightarrow \ell$, $n\rightarrow + \infty$. Now remark that from Proposition \ref{exist:dens:prop} since $X^{d}_1$ has a strictly positive density the function $F$ is one-to-one which in turn implies that $F^{-1}$ exists and is continuous so that $\theta^{*,n} \rightarrow F^{-1}(\ell) = \theta^*$.
\end{proof}
 
%

From Proposition \ref{exist:dens:prop} (existence of a positive density for $X^{n,d}_1$) the quantile $\theta^{*,n}$ at level $\ell$ of the random variable $X^{n,d}_1$ is the unique solution of the equation
$$
\P_x\left( X^{n,d}_1 \leq \theta \right) = \ell.
$$

In this section, we are interested in giving an expansion for the error $\theta^*-\theta^{*,n}$ in powers of $n^{-1}$, using Theorem \ref{implic:discret:err:main_result}. Actually, we will prove that \textbf{[A]} implies \textbf{[H-k]}, for a desired $k>0$.
As we can see, Theorem \ref{implic:discret:err:main_result} requires an expansion of $h^n-h$ and its derivatives up to order $k>0$ in order to have an expansion of $\theta^{*}-\theta^{*,n}$ at the same order.
Regularity of the function $h$ may be obtained mainly by two means: either the function $H$ is smooth w.r.t. the variable $\theta$ (with polynomial growth w.r.t $\theta$ and $x$) or the laws of $X_T$ and $X_T^n$ are smooth. Concerning the expansion of the difference $\partial^{k}_\theta h - \partial^{k}_\theta h^{n}$ it may also be obtained by two means: in the regular setting i.e. when the function $x \mapsto \partial^{k}_{\theta} H(\theta, x)$ and the coefficients $b$ and $\sigma$ are regular (say $b, \ \sigma, \ \partial^{k}_{\theta} H(\theta, .)$ are $\mathcal{C}^{R+5}_b$) one may use standard tools such as the one developed in Talay-Tubaro \cite{tala:tuba:90} (in the Brownian case); or in the (Hypo-)elliptic setting, the laws of $X_T$ and $X_T^n$ are smooth. Here, we are in the latter case. Indeed, the estimation of the quantile of a diffusion can be seen as an inverse problem, by setting $H(\theta,x) = 1-\frac{1}{1-\ell}\textbf{1}_{ \{x^d\ge \theta \} } $. We thus see that regularity of $H$ fails.
However, for $\theta \in \R$, we have:
$$
h(\theta) - h^n(\theta) = \frac{1}{1-\ell}\left(\mathbb{P}^x (X_1^d \le \theta)  - \mathbb{P}^x (X_1^{n,d} \le \theta)\right) .
$$

\noindent Let $p(T,x,\theta)$ be the density of the diffusion, and $p_n(T,x,\theta)$ the density of the Euler scheme at time $T$. 
The derivative w.r.t. $\theta$ of the previous equality is:
$$\forall k \ge 1, \forall (\theta,x)\in \R\times \R^d, \ 
\frac{d^k}{d\theta^k}h(\theta) - \frac{d^k}{d\theta^k}h^n(\theta) = \frac{1}{1-\ell}\left(\frac{\partial^{k-1}}{\partial\theta^{k-1}}p^{X^d_1}(1,x,\theta) -\frac{\partial^{k-1}}{\partial\theta^{k-1}}p_n^{X^{n,d}_1}(1,x,\theta) \right),
$$

\noindent where we denote by $p^{X^d_1}(1,x,\theta)$ and $p_n^{X^{n,d}_1}(1,x,\theta)$ the marginal densities of $X^d_1$ and
$X_1^{n,d}$. Consequently, we observe that in order to apply Theorem \ref{implic:discret:err:main_result}, we have to give an expansion of the marginal densities and their derivatives, up to an order $k>1$. Actually, we will show that the expansion holds for $p(1,x,\theta) - p_n(1,x,\theta)$ and its derivatives, the expansion for the marginals will follow from an integration over the $d-1$ first components.
 
\subsection{Expansion for the densities.}

Using a continuity technique known as the Parametrix expansion, Konakov and Mammen \cite{kona:mamm:02}, in the Brownian case, and 
Konakov and Menozzi \cite{kona:meno:10}, in the stable case, successfully derive an expansion for the density of the solution of \eqref{EqDfSt} to an arbitrary order, with explicit terms. The purpose of this section is to extend these results to the derivatives of the densities.

The Parametrix expansion consists in representing the density of the solution of \eqref{EqDfSt} 
as a series involving the density of a \textit{frozen} equation and the generators associated with \eqref{EqDfSt} and the \textit{frozen} density.
We take a few lines here to describe this technique.

We define the following process as the frozen process.
Recall $T=1$ is a fixed deterministic time. For a given terminal point $y \in \R^d$, the frozen equation 
at point $y$ is defined as:
\begin{equation}\label{FrozenProc}
\tilde{X}_t = x + b(y)t + \sigma(y)Z_t.
\end{equation}
Thanks to the uniform ellipticity of $\sigma$, the process \eqref{FrozenProc} has a density with respect to the Lebesgue measure. Recalling that $\Sigma(z) = \sigma(z)\sigma(z)^T$, the density is given by:
\begin{eqnarray*}
\tilde{p}_\alpha^y(t,x,y)= 
\begin{dcases}
 \frac{\det(\Sigma(y))^{-1/2} }{(2\pi t)^{d/2}} 
\exp \left( -\frac{1}{2t}(y-x-b(y)t)^T \Sigma(y)^{-1} (y-x-b(y)t) \right), &{\rm if} \  \alpha =2\\
 \frac{1}{(2\pi)^d} \int_{\R^d}dp e^{-i\langle p, y-x-b(y)t\rangle} \exp\left( -t \int_{S^{d-1}} |\langle p,\sigma(y)\vartheta \rangle|^\alpha \mu(d\vartheta)\right), &{\rm if } \ \alpha \in (0,2).
 \end{dcases}
\end{eqnarray*}
We will often drop the superscript $y$ with the convention $\tilde{p}_\alpha(t,x,y)=\tilde{p}_\alpha^y(t,x,y)$, when no ambiguity is possible.
The distance between $p(t,x,y)$ and $\tilde{p}_\alpha(t,x,y)$ will then be quantified by the difference of the generators of \eqref{EqDfSt} and \eqref{FrozenProc}. The generator of the SDE \eqref{EqDfSt}:
\begin{eqnarray*}
Lf(t,x,y) =
\begin{dcases}
 \frac{1}{2} \tr \left(\Sigma(x) \partial^2_x f(t,x,y) \right) +\langle b(x),\partial_x f(t,x,y)\rangle, &\mbox{ if $\alpha=2$ },\\
\langle b(x) , \partial_x f(t,x,y) \rangle +\int_{\R^d} f(t,x+ \sigma(x)z,y) -f(t,x,y) - \frac{\langle \nabla_x f(t,x,y), \sigma(x)z \rangle}{1+ |z|^2}\nu(dz), &\mbox{ if $\alpha\in(0,2)$}.
\end{dcases}
\end{eqnarray*}

Let us define the generator of the frozen process \eqref{FrozenProc}:
\begin{eqnarray*}
\tilde{L}^*f(t,x,y) = 
\begin{dcases}
\frac{1}{2} \tr \left(\Sigma(y)\partial^2_xf(t,x,y) \right)+ \langle b(y),\partial_x f(t,x,y) \rangle, &\mbox{ if  $\alpha =2$,}\\
\langle b(y) , \partial_x f(t,x,y) \rangle +\int_{\R^d} f(t,x+ \sigma(y)z,y) -f(t,x,y) - \frac{\langle \nabla_x f(t,x,y), \sigma(y)z \rangle}{1+ |z|^2}\nu(dz), &\mbox{ if $\alpha \in (0,2)$}.
\end{dcases}
\end{eqnarray*}
When $\alpha=2$, these are differential operators of order 2.
For $\alpha\in (0,2)$, these operators should be seen as fractional derivative of order $\alpha$.
\begin{THM}\label{parametrix_series}
Under the assumptions \textbf{[A]}, the solution of \eqref{EqDfSt} exists and has density with respect to the Lebesgue measure.
Let $p(t,x,y)$ denote the density of \eqref{EqDfSt}. It admits the following representation:
$$
p(t,x,y) = \sum_{k=0}^{\infty} \tilde{p}_\alpha \otimes H^{(k)}(t,x,y),
$$
where we denoted $H(t,x,y) = (L-\tilde{L}^*)\tilde{p}_\alpha(t,x,y)$, and $\otimes$ is the space-time convolution:
$$
f\otimes g\left(t,x,y\right) = \int_0^t\int_{\R^d}f\left(u,x,z\right) g\left(t-u,z,y\right)dzdu,
$$
and $H^{(k)} (t,x,y) = H^{(k-1)} \otimes H (t,x,y), \mbox{ and } \tilde{p}_\alpha \otimes H^{(0)} (t,x,y)=\tilde{p}_\alpha(t,x,y)$.
\end{THM}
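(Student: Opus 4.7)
The plan is to adapt the parametrix approach of Konakov--Mammen \cite{kona:mamm:02} in the Brownian case and Konakov--Menozzi \cite{kona:meno:10} in the stable case. The first step is to establish existence of a smooth density for $X_t$. In the stable regime $\alpha\in(0,2)$, this follows from the uniform non-degeneracy of $\sigma$ and the smoothness and non-degeneracy of the spectral measure (assumptions \textbf{[A-2]}--\textbf{[A-3]}) via the machinery of Kolokoltsov \cite{kolo:00}; in the Brownian regime $\alpha=2$, it is classical under \textbf{[A-2]} together with the Lipschitz regularity of the coefficients from \textbf{[A-1]}.

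Once existence is granted, the derivation of the series proceeds as follows. Fix the terminal point $y$ and view $L$ and $\tilde L^{\ast,y}$ as acting in the forward variable $x$. Since $(\partial_t-\tilde L^*)\tilde p_\alpha(t,\cdot,y)=0$ with initial condition $\delta_y$ and $\tilde p_\alpha$ is the fundamental solution of the frozen operator, a standard Duhamel expansion applied to $p$ (which satisfies $(\partial_t-\tilde L^*)p=(L-\tilde L^*)p$ with the same initial condition) gives $p=\tilde p_\alpha+\tilde p_\alpha\otimes \Phi$, where $\Phi=(L-\tilde L^*)p$. Applying $L-\tilde L^*$ to both sides and using that this operator acts only in $x$ and therefore commutes with the space-time convolution yields the Volterra equation
$$\Phi=H+H\otimes \Phi,\qquad H=(L-\tilde L^*)\tilde p_\alpha.$$
Iterating gives the Neumann series $\Phi=\sum_{k\ge 1}H^{(k)}$, so that $p=\sum_{k\ge 0}\tilde p_\alpha\otimes H^{(k)}$ with the convention $\tilde p_\alpha\otimes H^{(0)}=\tilde p_\alpha$.

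The technical heart of the proof is the convergence of this series. The key input is a smoothing estimate of the form
$$|H(t,x,y)|\le \frac{C}{t^{1-\eta}}\,q(t,x,y),$$
where $\eta\in(0,1]$ encodes the Hölder regularity of the coefficients coming from \textbf{[A-1]} and $q$ is a kernel comparable to $\tilde p_\alpha$ with possibly degraded constants. This smoothing stems from the cancellation built into $L-\tilde L^*$: the coefficient differences $b(x)-b(y)$ and $\sigma(x)-\sigma(y)$ (or $\Sigma(x)-\Sigma(y)$ in the Brownian case) vanish on the diagonal $x=y$ and compensate the time singularity of the derivatives of $\tilde p_\alpha$. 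Iterated space-time convolutions then generate Beta-function factors of the form $B(\eta,k\eta)$ and produce a bound
$$|H^{(k)}(t,x,y)|\le C^{k}\,\frac{t^{k\eta-1}\,\Gamma(\eta)^{k}}{\Gamma(k\eta)}\,q(t,x,y),$$
ensuring absolute convergence of the series uniformly on compact subsets of $(0,T]\times\R^d\times\R^d$.

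The main obstacle is establishing the smoothing bound in the stable case, where $L-\tilde L^*$ is a nonlocal operator of order $\alpha$. One must control expressions of the form $\int_{\R^d}[\tilde p_\alpha(t,x+\sigma(x)z,y)-\tilde p_\alpha(t,x+\sigma(y)z,y)]\,\nu(dz)$ against the singular Lévy measure $\nu(dz)=C_\alpha |z|^{-1-\alpha}d|z|\,\mu(d\bar z)$. Splitting between the near-diagonal region $\{|z|\le t^{1/\alpha}\}$ and its complement, and combining stable-type heat-kernel bounds on $\tilde p_\alpha$ and its derivatives (obtained through the Fourier inversion formula and the non-degeneracy \textbf{[A-3]}) with the Hölder regularity of $\sigma$ from \textbf{[A-1]}, one obtains the required integrable time singularity. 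This is precisely the analysis carried out in \cite{kona:meno:10}, while the Brownian regime reduces to the classical Friedman parametrix argument exploited in \cite{kona:mamm:02}; the theorem then follows by invoking these frameworks.
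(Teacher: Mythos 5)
The paper does not prove Theorem \ref{parametrix_series}: it cites Friedman \cite{friedman:64} in the Brownian case and Kolokoltsov \cite{kolo:00} in the stable case, so you are in the same position as the authors in deferring the analytical machinery to the literature, and your outline of that machinery (existence under \textbf{[A]}, a Volterra equation for the parametrix series, the smoothing gain $t^{\eta-1}$ in $H$ coming from the cancellation of the coefficients on the diagonal, Beta-function control of the iterated convolutions) is the right one. There is, however, a subtle inaccuracy in your derivation of the Volterra equation. You fix the terminal point $y$ and apply Duhamel's formula for the operator $\tilde L^{*,y}$, obtaining $p=\tilde p_\alpha^y+\tilde p_\alpha^y\otimes\Phi$ with $\Phi=(L-\tilde L^{*,y})p$. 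This produces a series where every parametrix factor is $\tilde p_\alpha^y$, frozen at the terminal point. But in the theorem's convolution $\tilde p_\alpha\otimes H^{(k)}(t,x,y)=\int_0^t\int \tilde p_\alpha(u,x,z)H^{(k)}(t-u,z,y)\,dz\,du$, the convention $\tilde p_\alpha(u,x,z)=\tilde p_\alpha^{\,z}(u,x,z)$ applies, i.e.\ the freezing point moves with the convolution variable $z$, as in Konakov--Mammen \cite{kona:mamm:02} and Konakov--Menozzi \cite{kona:meno:10}. The two constructions are not termwise equal, and the fixed-$y$ Duhamel you sketch does not literally yield the series in the statement. Your commutation claim also needs care for the same reason: $(L-\tilde L^{*})$ acts in $x$, but $\tilde L^{*}$ inside the convolution is frozen at $z$, which varies. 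The standard way to obtain the theorem's representation is not a Duhamel formula for a fixed frozen generator, but a direct distributional verification that the Neumann series $\sum_{k\ge 0}\tilde p_\alpha\otimes H^{(k)}$ satisfies $(\partial_t-L_x)p=0$ with initial condition $\delta_y$: one checks that $(\partial_t-L_x)\tilde p_\alpha^y=-H$ for $t>0$ and that $(\partial_t-L_x)(\tilde p_\alpha\otimes \Phi)=\Phi-H\otimes\Phi$, whereupon the Volterra identity $\Phi=H+H\otimes\Phi$ makes the terms telescope. With that replacement (or by invoking the cited references directly), the argument is sound; the remaining technical part you describe — the stable heat-kernel bounds, the near/off-diagonal splitting against $\nu$, and the Gamma-function control of $H^{(k)}$ — is exactly what makes the series converge.
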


This result has been investigated in the literature, let us mention Friedman \cite{friedman:64} for the Brownian case and Kolokoltsov \cite{kolo:00} for the stable case.
The proof relies on a precise study of the frozen density and its derivatives (fractional derivatives in the stable case), and show that in the time space convolution, the time singularities induced by the derivation can be compensated to get a convergent series.

Similarly, one gets an equivalent result for the density of the Euler scheme.
We introduce the "frozen Markov chains" $(\tilde{X}_{t_k}^n)_{k\in \leftB 0,n \rightB}$:
$$
\tilde{X}_{t_k}^n =x, \   \tilde{X}_{t_{k+1}}^n = \tilde{X}_{t_k}^n + b(y) \Delta + \sigma(y)(Z_{t_{k+1}}- Z_{t_k}).
$$

We denote the discrete generators:
\begin{eqnarray}
L_nf(t_k-t_j,x,y) &=& \Delta^{-1}\left(\int p_n(\Delta,x,z) f(t_k-t_{j+1},z,y)dz - f(t_k-t_{j+1},x,y)\right),\label{GEN_DISC_1}\\
\tilde{L}_n^*f(t_k-t_j,x,y) &=& \Delta^{-1}\left(\int \tilde{p}^y(\Delta,x,z) f(t_k-t_{j+1},z,y)dz - f(t_k-t_{j+1},x,y)\right)\label{GEN_DISC_2}.
\end{eqnarray}


We then obtain a representation of the density of the Euler scheme using the frozen density and the discrete generators.
\begin{THM}\label{parametrix_euler}
The density $p_n(t_k,x,y)$ of the Euler scheme admits the following representation:
$$
p_n(t_k-t_j,x,y) = \sum_{r=0}^{k-j} \tilde{p}_\alpha \otimes_n H_n^{(r,n)}(t_k-t_j,x,y),
$$

\noindent where we denoted $H_n(t_k,x,y) = (L_n -\tilde{L}_n) \tilde{p}(t_k,x,y)$, and $\otimes_n$ is the discretized space-time convolution:
$$
f\otimes_n g\left(t_k,x,y\right) = \frac{1}{n}\sum_{i=0}^{k-1} \int_{\R^d}f\left(t_i,x,z\right) g\left(t_k-t_i,z,y\right)dz,
$$
and $H_n^{(r,n)} (t_k,x,y) = H_n^{(r-1,n)} \otimes_n H_n (t_k,x,y), \mbox{ where } \tilde{p}_\alpha \otimes H_n^{(0,n)} (t_k,x,y)=\tilde{p}_\alpha(t_k,x,y)$.
\end{THM}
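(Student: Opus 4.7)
The strategy is to adapt the continuous-time parametrix proof of Theorem~\ref{parametrix_series} to the discrete Markov chain obtained by iterating the Euler scheme transition kernel. The new ingredient is to use the discrete generators \eqref{GEN_DISC_1}--\eqref{GEN_DISC_2} in place of the differential operators $L$ and $\tilde L^*$ appearing in the continuous case. The resulting expansion turns out to be a \emph{finite} sum rather than a series, because the discrete time-stepping provides a natural termination mechanism.

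\textbf{Step 1 (Volterra identity via telescoping).} For fixed $x,y$, introduce the interpolating quantities
\[
S_i := \int_{\R^d} p_n(t_i - t_j, x, z)\, \tilde p^y(t_k - t_i, z, y)\, dz,\qquad i = j,\ldots, k,
\]
with the usual conventions $p_n(0, x, \cdot) = \delta_x$ and $\tilde p^y(0, \cdot, y) = \delta_y$, so that $S_j = \tilde p^y(t_k - t_j, x, y)$ and $S_k = p_n(t_k - t_j, x, y)$. Write $p_n - \tilde p^y = \sum_{i=j}^{k-1}(S_{i+1}-S_i)$. For each increment, I would apply the Chapman--Kolmogorov identity for $p_n$ to factor out one transition $p_n(\Delta,\cdot,\cdot)$, and the semigroup property of the frozen chain (which has i.i.d.\ increments because its coefficients are frozen at $y$) to factor out one $\tilde p^y(\Delta,\cdot,\cdot)$ step. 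The definitions of $L_n$ and $\tilde L_n^*$ then rewrite the resulting one-step difference as $\Delta(L_n - \tilde L_n^*)\tilde p$, giving
\[
S_{i+1} - S_i = \Delta \int_{\R^d} p_n(t_i - t_j, x, w)\, H_n(t_k - t_i, w, y)\, dw
\]
with $H_n := (L_n - \tilde L_n^*)\tilde p$, and hence the Volterra identity
\[
p_n(t_k - t_j, x, y) = \tilde p^y(t_k - t_j, x, y) + (p_n \otimes_n H_n)(t_k - t_j, x, y).
\]

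\textbf{Step 2 (Iteration and termination).} Substituting this identity into its own remainder $r$ times yields
\[
p_n = \sum_{s=0}^{r-1} \tilde p \otimes_n H_n^{(s,n)} + p_n \otimes_n H_n^{(r,n)}.
\]
The key observation is that each $\otimes_n$ consumes one discrete time slot: the $r$-fold iterated convolution $p_n \otimes_n H_n^{(r,n)}(t_k - t_j, x, y)$ is supported on $r$ strictly ordered internal time indices drawn from $\{j+1,\ldots,k-1\}$, a set of cardinality $k-j-1$. Hence for $r > k-j$ the remainder vanishes identically, the iteration terminates, and regrouping the surviving terms yields the finite sum stated in the theorem.

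The main obstacle -- more bookkeeping than analytic -- is the careful handling of the boundary contributions at $t=0$, where $\tilde p$ collapses to a Dirac mass, and the verification that the algebraic manipulation using Chapman--Kolmogorov plus the frozen-chain semigroup really does produce the factor $\Delta$ that converts the difference of one-step kernels into $\Delta\, H_n$. Once these combinatorial details are settled, the expansion is a purely algebraic identity, requiring no estimates on the kernels (which is consistent with the fact that, unlike in Theorem~\ref{parametrix_series}, convergence of an infinite series is not at stake).
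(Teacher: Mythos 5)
The paper itself gives no proof of Theorem~\ref{parametrix_euler}: it states it as a known result from Konakov--Mammen and Konakov--Menozzi. Your proposal reconstructs the standard discrete parametrix argument used in those references, and the overall structure (telescoping into a discrete Volterra identity via the one-step Chapman--Kolmogorov decomposition and the frozen chain's semigroup property, then iterating and invoking a degenerate support condition to terminate the sum) is correct and is indeed the argument that underlies the cited works.

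One piece of the bookkeeping in Step~2 is slightly off, even though the stated threshold comes out right. You claim that $p_n \otimes_n H_n^{(r,n)}(t_k - t_j, x, y)$ is supported on $r$ strictly ordered indices from $\{j+1,\dots,k-1\}$, a set of cardinality $k-j-1$, and then conclude vanishing for $r > k-j$; but cardinality $k-j-1$ would give vanishing already at $r = k-j$, which is false. The clean way to get the right threshold is the inductive support statement $H_n^{(r,n)}(t_l,\cdot,\cdot) = 0$ whenever $l < r$ (base case $r=1$ because $H_n$ is only defined for one time step or more; inductive step because the outer index in $H_n^{(r-1,n)} \otimes_n H_n$ must be at least $r-1$ while $H_n$ needs at least one slot). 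Then $p_n \otimes_n H_n^{(r,n)}(t_{k-j},\cdot,\cdot)$ requires some $i \in \{0,\dots,k-j-1\}$ with $k-j-i \geq r$, nonempty iff $r \leq k-j$, giving vanishing exactly for $r > k-j$ as claimed. Note the outer kernel $p_n$ is allowed to sit at time $0$ (a Dirac), so the relevant index set has $k-j$ elements, not $k-j-1$. A second point worth flagging: the passage from $(p_n \otimes_n H_n)\otimes_n H_n$ to $p_n \otimes_n H_n^{(2,n)}$ uses associativity of $\otimes_n$, which holds here only modulo the convention that $H_n$ at time $0$ contributes nothing; you implicitly use this but it is exactly the kind of boundary subtlety you rightly flag as deserving care.
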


\begin{REM}
We use the notation $H_n^{(r,n)} (t_k,x,y)$ to emphasize the dependency in the discretization of the convolution.
That is, the subscript $n$ refers to the discrete generators, whereas the super script $(r,n)$ refers respectively to the number of steps we iterate the convolution, and the number of discretization dates. Therefore, we have $H_n^{(1,n)} (t_k,x,y)= H_n(t,x,y)$.
Also, using the convention $H_n^{(r,n)}=0$ for $r >k-j$, we can write $p_n(t_k-t_j,x,y) = \sum_{r=0}^{+\infty} \tilde{p}_\alpha \otimes_n H_n^{(r,n)}(t_k-t_j,x,y)$.
\end{REM}

Once again, these results have been investigated in the literature and we state them here without proof.
The reader may consult \cite{kona:mamm:02, kona:meno:10} and the references therein.

Roughly speaking, we see that the differences between the two expansions of Theorems \ref{parametrix_series} and \ref{parametrix_euler} come from the convolution and the kernel.
Thus, in order to get an expansion for $p-p_n$, we introduce for all $k\in \leftB 0,n-1 \rightB$:
\begin{eqnarray*}
p^d(t_k,x,y) &=& \sum_{r=0}^{+\infty} \tilde{p}_\alpha \otimes_n H^{(r,n)} (t_k,x,y) , \\
H^{(r,n)} (t_k,x,y) &=& H^{(r-1,n)} \otimes_n H (t_k,x,y), \mbox{ where } \tilde{p}_\alpha \otimes H^{(0,n)} (t_k,x,y)=\tilde{p}_\alpha(t_k,x,y).
\end{eqnarray*}

Formally speaking, $p^d$ is the series of Theorem \ref{parametrix_series}, with discretized time integrals .
We then look for an expansion for the two differences $p-p_n = p-p^d + p^d-p_n$.
To that end, we define $\tilde{L}_*f(t,x,y) = \tilde{L}_x f(t,x,y)$, where :
\begin{eqnarray*}
\tilde{L}_{\xi} f(t,x,y)=
\begin{dcases}
 \frac{1}{2} \tr \left( \Sigma(\xi) \partial^2_x f(t,x,y) \right) + \langle b(\xi),\partial_x f(t,x,y)\rangle, &\mbox{ if $\alpha =2$},\\
\langle b(\xi) ,\partial_x f(t,x,y) \rangle - \int_{S^{d-1}} | \langle\partial_x,\sigma(\xi)\vartheta \rangle |^\alpha f(t,x,y)\mu (d\vartheta), &\mbox{ if $\alpha \in (0,2)$}.
\end{dcases}
\end{eqnarray*}

Note that both generators $\tilde{L}^*$ and $\tilde{L}_*$ depends on the freezing parameter $y$.
This induces extra caution below, as we will be led to differentiate with respect to the freezing parameter.

Extending the results of Theorem $1.1$ in Konakov and Mammen in \cite{kona:mamm:02}, for the Brownian case, and Theorem $21$ in Konakov and Menozzi in \cite{kona:meno:10}, for the Stable case, we have the following result. 
\begin{THM}\label{ExpDerv}
Assume that \textbf{[A]} holds.
Let $M\in\N^*$ be such that when $\alpha=2$, $0<M \le m/2$, and when $\alpha <2$, we assume $m > d+4$ and $0<M\le m-(d+4)$. Let $\gamma \in \N^d$, with $|\gamma|\le M$. 
Then, for all $x,y \in \R^d$, we have:
\begin{eqnarray} \label{EXP_DERIV_ DENS}
\partial_y^\gamma p(1,x,y)-\partial_y^\gamma p_n(1,x,y) &=& \sum_{k=1}^{M-1-|\gamma|} \frac{1}{(k+1)!n^k}
\partial_y^\gamma \left(
p\otimes_n \big(L-\tilde{L}^*\big)^{k+1}p^d \right)(1,x,y) \\
&&\qquad - 
\frac{1}{(k+1)!n^k} \partial_y^\gamma  \left(p^d\otimes_n \big(\tilde{L}_*-\tilde{L}^*\big)^{k+1} p_n\right)(1,x,y) 
+ \frac{\partial_y^\gamma R(x,y)}{n^{M-|\gamma|}}.\nonumber
\end{eqnarray}
Also, there is a constant $C>0$ depending on the set of assumptions \textbf{[A]}, $T$, $\gamma$, and $M$ such that the following bound holds for each term and the remainders:
\begin{eqnarray}
\sum_{k=1}^{M-|\gamma|-1} \left|\partial_y^\gamma\left(p\otimes_n \big(L-\tilde{L}^*\big)^{k+1}p^d\right)(1,x,y) \right| +  \left| \partial_y^\gamma \left(p^d\otimes_n \big(\tilde{L}_*-\tilde{L}^*\big)^{k+1} p_n \right)(1,x,y)\right| \nonumber \\
+ |\partial_y^\gamma R(x,y)|
\leq C\bar{p}^\alpha_K(t,x,y),\label{GAUSSIAN_ESTIMATE}
\end{eqnarray}
where for a given $K>0$, we denoted $\bar{p}^\alpha_K(t,x,y)$ the following quantity:
\begin{eqnarray*}
\bar{p}^\alpha_K(t,x,y) =
\begin{dcases}
 t^{-d/2} \exp \left(-K\frac{|y-x|^2}{t} \right), \  &\rm{if} \ \alpha=2,\\
 \frac{t^{-d/\alpha}}{  \left[K \vee \frac{|y-x|}{t^{\frac{1}{\alpha}}}\right]^{d+\alpha}}, \ & \rm{if} \ \alpha \in (0,2).
 \end{dcases}
\end{eqnarray*}

\end{THM}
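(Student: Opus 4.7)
The plan is to extend the Konakov--Mammen/Konakov--Menozzi parametrix expansions of $p-p_n$ to include $y$-derivatives up to order $|\gamma|\le M$. The starting point is the decomposition
\[
p(1,x,y) - p_n(1,x,y) = \big(p - p^d\big)(1,x,y) + \big(p^d - p_n\big)(1,x,y),
\]
where $p^d$ is the auxiliary density introduced in the excerpt (parametrix series built with the continuous kernel $H=(L-\tilde L^*)\tilde p_\alpha$ but with the \emph{discretized} space-time convolution $\otimes_n$). The first difference isolates the effect of time-discretizing the convolution, and the second isolates the effect of replacing the infinitesimal generators $L,\tilde L^*$ by their one-step Euler analogues $L_n,\tilde L_n^*$ defined in \eqref{GEN_DISC_1}--\eqref{GEN_DISC_2}.

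For $p-p^d$, I would compare the two series term by term: on each subinterval $[t_j,t_{j+1}]$ one writes a Taylor expansion (with integral remainder) of order $M-|\gamma|$ in the time variable of the continuous integrand. Using the Kolmogorov backward equation for $p$ and the analogous identity $\partial_t \tilde p_\alpha=\tilde L^*\tilde p_\alpha$, every time derivative can be converted into an application of $L-\tilde L^*$. Iterating the parametrix identity, the $k$-th Taylor term produces precisely the contribution $\tfrac{1}{(k+1)!n^k}\,p\otimes_n(L-\tilde L^*)^{k+1}p^d$, while the integral remainder is absorbed into $R(x,y)/n^{M-|\gamma|}$. For $p^d-p_n$, the two series share the discrete convolution $\otimes_n$ but differ in their kernels $H$ vs $H_n$. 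Here the relevant expansion is that of the transition operator of the frozen Euler scheme in powers of $\Delta=1/n$: writing $L_n=\tilde L_* + \Delta\cdot(\cdots)+\cdots$ and similarly for $\tilde L_n^*=\tilde L^* + \Delta\cdot(\cdots)+\cdots$, the difference $H-H_n$ expands in powers of $1/n$ with coefficients that, after plugging back into the series and combining via the combinatorial binomial identities inherent to the iterated parametrix, regroup exactly as $\tfrac{1}{(k+1)!n^k}\,p^d\otimes_n(\tilde L_*-\tilde L^*)^{k+1}p_n$. This is where the generator $\tilde L_*$ (frozen at the starting point $x$) naturally appears, in contrast with $\tilde L^*$ (frozen at $y$).

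The $y$-derivatives are then placed directly on the above representations. Since every occurrence of $y$ sits either in the endpoint of $\tilde p_\alpha^y$, in its freezing coefficients $b(y),\sigma(y)$, or inside the kernels, $\partial_y^\gamma$ distributes across convolutions by Leibniz's rule, and the derivatives of $\tilde p_\alpha$ and of the frozen-coefficient factors are estimated via the sharp bounds on $\tilde p_\alpha$ and its derivatives --- Gaussian Aronson-type in the case $\alpha=2$, and the fractional analogues derived in \cite{kolo:00,kona:meno:10} when $\alpha\in(0,2)$. The regularity assumption \textbf{[A]} (with $m\ge 2M$ when $\alpha=2$, respectively $m>d+4+M$ when $\alpha<2$) is precisely what is required so that sufficiently many $y$-derivatives of the coefficients and of $\tilde p_\alpha$ can be taken and controlled. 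The bound \eqref{GAUSSIAN_ESTIMATE} follows from the standard time-singularity bookkeeping: each spatial derivative of $\tilde p_\alpha(t,\cdot,\cdot)$ costs a factor $t^{-1/\alpha}$ (resp. $t^{-1/2}$ if $\alpha=2$), each application of $L-\tilde L^*$ gains a Hölder-type factor $|x-y|^{\eta}$ smoothing the singularity, and the (discrete) space-time convolution with $\bar p^\alpha_K$ preserves the $\bar p^\alpha_K$-bound up to Beta-function constants, ensuring the series converges and yields the claimed majorization.

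The main obstacle will be the simultaneous handling of three subtle points: (i) the $y$-derivatives hit the freezing parameter of $\tilde p_\alpha^y$ (and of $\tilde L^*,\tilde L_*$), which generates extra ``non-convolution'' terms that must be re-expressed via integration by parts to keep the parametrix structure intact; (ii) in the stable regime the generators are \emph{non-local}, so ``$\eta$-Hölder smoothing'' must be replaced by the fractional-derivative smoothing developed in \cite{kona:meno:10}, and one must check that the regularity budget $m>d+4+M$ suffices for iterating $M$ times; (iii) verifying that the Taylor remainders at each step are genuinely $O(n^{-(M-|\gamma|)})$ in the $\bar p^\alpha_K$-norm, uniformly in $(x,y)$, which is the place where the interplay between time-discretization order and spatial derivative order is tightest.
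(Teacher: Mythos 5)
Your decomposition $p-p_n=(p-p^d)+(p^d-p_n)$ and the overall route to the expansion itself are consistent with the paper, which simply cites Konakov--Mammen and Konakov--Menozzi for \eqref{EXP_DERIV_ DENS} when $\gamma=0$ and then applies $\partial_y^\gamma$ term by term. The real content of the theorem is the uniform-in-$n$ estimate \eqref{GAUSSIAN_ESTIMATE}, and here your proposal has a genuine gap.

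Your plan is to distribute $\partial_y^\gamma$ by Leibniz and then invoke ``standard time-singularity bookkeeping,'' crediting each derivative with a factor $t^{-1/\alpha}$ and relying on Beta-function convergence. This does not close. In a term such as $p\otimes_n(L-\tilde L^*)^{k+1}p^d(1,x,y)$ the $y$-dependence sits in the backward slot of $p^d(1-t_i,z,y)$, so a naive $\partial_y^\gamma$ gives a singularity $(1-t_i)^{-|\gamma|/\alpha}$ \emph{near $t_i=1$}, on top of the $(1-t_i)^{-(k+1)}$ coming from the $(L-\tilde L^*)^{k+1}$ kernel. For $k\ge1$ these exponents exceed $1$, so the discrete convolution $\tfrac1n\sum_i$ does not yield a convergent Beta integral, and no bound uniform in $n$ follows. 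The paper resolves this by a dichotomy on the time index: for $i<n/2$ the backward time $1-t_i$ is bounded below, so one may differentiate directly; for $i\ge n/2$ one first transfers $(L-\tilde L^*)^{k+1}$ onto $p(t_i,x,z)$ by taking the adjoint (now harmless since $t_i$ is bounded below), and then --- crucially --- performs the change of variable $z=y-u$. After this substitution, $\partial_y^\gamma$ acts on $p^d(1-t_i,y-u,y)$, i.e.\ on \emph{both} forward and backward arguments simultaneously, and the singularity-free estimate $|\partial_x^\gamma p^d(t,x,x+\xi)|\le C\bar p^\alpha_K(t,x,x+\xi)$ of Lemma \ref{LemmeDerivDens} (equation \eqref{DerivDensDeux}) kills the would-be $(1-t_i)^{-|\gamma|/\alpha}$ singularity entirely. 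You gesture at ``integration by parts to keep the parametrix structure intact,'' which is the right direction for the adjoint step, but you never identify the change of variable $z=y-u$ nor the associated singularity-free bound on $\partial^\gamma p^d(t,x,x+\xi)$; without these, the Beta-function bookkeeping you invoke simply diverges. The same mechanism (with $\Phi$ and $Q_M$ in place of $p^d$ and $(L-\tilde L^*)^{k+1}p^d$, see \eqref{DerivPhiDeux} and \eqref{EST_DERIV_R_M}) is what controls the remainder $\partial_y^\gamma R(x,y)$, and is again missing from your argument.

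Separately, your derivation of the coefficients in the $p^d-p_n$ piece from an ``expansion of discrete generators in powers of $\Delta$'' and ``combinatorial binomial identities'' is much looser than what is actually needed to land on the stated $\tfrac{1}{(k+1)!n^k}\,p^d\otimes_n(\tilde L_*-\tilde L^*)^{k+1}p_n$ structure; however, since the paper treats this as known (citing \cite{kona:mamm:02,kona:meno:10}) this part is not where the proof lives.
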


For $\gamma=0$, expansion \eqref{EXP_DERIV_ DENS} is given in \cite{kona:mamm:02} in the Brownian case, and in \cite{kona:meno:10} in the stable case.
To get an expansion for $\partial_y^\gamma(p-p_n)(1,x,y)$,
we take the derivative along $y$ in each term in that expansion, and
prove that each one is bounded by an $\alpha$ stable density.

Formally, $\bar{p}^\alpha_K(t,x,y)$ is a stable density (up to some normalizing constant depending on $K>0$).
Observe that $\bar{p}^\alpha$ satisfies a semi-group property in the following sense:
\begin{PROP}\label{SEMI_GROUP_PROP_PK}
For all $\tau \in (0,t)$, for all $x,y\in \R^d$ for all $K_1,K_2>0$, there exists $K,C>0$ depending on the set of assumptions $\textbf{[A]}$ and the terminal time $T$, such that:
\begin{equation}\label{SemiGroup}
\int_{\R^d}\bar{p}^\alpha_{K_1}(\tau,x,z)\bar{p}^\alpha_{K_2}(t-\tau,z,y)dz \le C \bar{p}^\alpha_K(t,x,y).
\end{equation}
\end{PROP}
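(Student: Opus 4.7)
The proof naturally splits according to whether $\alpha=2$ or $\alpha\in(0,2)$. The unifying idea is that $\bar p^\alpha_K$ is proportional, up to multiplicative constants depending only on $K$, $d$, and $\alpha$, to a bona fide probability density for which an \emph{exact} Chapman--Kolmogorov identity is available; the semi-group inequality then follows by combining the exact identity with the two-sided equivalence and tracking the constants.

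For $\alpha=2$ I would carry out a direct Gaussian computation: plugging in the definitions, the exponent appearing inside the convolution is the quadratic form $K_1|z-x|^2/\tau + K_2|y-z|^2/(t-\tau)$ in $z$. Completing the square in $z$ splits it into $(K_1/\tau+K_2/(t-\tau))|z-z^*|^2$ plus a residual $K_1K_2|y-x|^2/(K_1(t-\tau)+K_2\tau)$. The Gaussian $z$-integral produces a factor $(\pi\tau(t-\tau)/(K_1(t-\tau)+K_2\tau))^{d/2}$; combined with the $(\tau(t-\tau))^{-d/2}$ prefactor this gives $\pi^{d/2}(K_1(t-\tau)+K_2\tau)^{-d/2}\le \pi^{d/2}(K_1\wedge K_2)^{-d/2}\,t^{-d/2}$. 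For the residual, $K_1K_2/(K_1(t-\tau)+K_2\tau)\ge (K_1\wedge K_2)/t$; hence the conclusion holds with $K=K_1\wedge K_2$ and $C=\pi^{d/2}(K_1\wedge K_2)^{-d/2}$.

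For $\alpha\in(0,2)$ I would first record the elementary two-sided equivalence
$$\bar p^\alpha_K(t,x,y) \;\asymp\; \frac{t}{(|y-x|+Kt^{1/\alpha})^{d+\alpha}},$$
obtained by a short case split on whether $|y-x|\le Kt^{1/\alpha}$, with constants depending only on $d$ and $\alpha$. The right-hand side is the standard heat-kernel form for the density $q_\alpha$ of the isotropic symmetric $\alpha$-stable process on $\R^d$ (see Kolokoltsov \cite{kolo:00}), and the scaling $Z_{cs}\overset{(d)}{=}c^{1/\alpha}Z_s$ gives the convenient rephrasing $\bar p^\alpha_K(t,x,y)\asymp K^{-\alpha}\,q_\alpha(K^\alpha t,x,y)$. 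Substituting this into the left-hand side of \eqref{SemiGroup} and invoking the exact Chapman--Kolmogorov identity for $q_\alpha$ yields
$$\int_{\R^d}\bar p^\alpha_{K_1}(\tau,x,z)\,\bar p^\alpha_{K_2}(t-\tau,z,y)\,dz \;\le\; C\,K_1^{-\alpha}K_2^{-\alpha}\,q_\alpha\bigl(K_1^\alpha\tau+K_2^\alpha(t-\tau),\,x,\,y\bigr).$$
Setting $S:=K_1^\alpha\tau+K_2^\alpha(t-\tau)$ one has the elementary convex-combination bounds $(K_1\wedge K_2)^\alpha\,t\le S\le(K_1\vee K_2)^\alpha\,t$; inserting these into the two-sided estimate for $q_\alpha(S,x,y)$ and using the algebraic identity $(K_1\vee K_2)^\alpha K_1^{-\alpha}K_2^{-\alpha}=(K_1\wedge K_2)^{-\alpha}$ gives back the desired $C\,\bar p^\alpha_{K_1\wedge K_2}(t,x,y)$, i.e. $K=K_1\wedge K_2$ and $C$ depending only on $d,\alpha,K_1,K_2$.

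No step is genuinely delicate: the only real book-keeping concerns the multiplicative constants in the two-sided equivalence between $\bar p^\alpha_K$ and the rescaled isotropic stable density, which enter $C$ but not the structure of the argument, since the semi-group identity on which the proof rests is \emph{exact} for $q_\alpha$.
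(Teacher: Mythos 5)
Your proposal is correct and rests on the same underlying idea as the paper's proof — compare $\bar p^\alpha_K$ to a genuine transition density enjoying an \emph{exact} Chapman--Kolmogorov identity, then transfer the semi-group property through the two-sided comparison — but the comparison density you choose is different, and in both regimes the difference is worth noting.

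The paper's proof invokes the Aronson-type two-sided bound \eqref{DOMINATION_DENS_GEL}, $c\,\bar p^\alpha_K(t,x,y)\le\tilde p_\alpha^y(t,x,y)\le C\,\bar p^\alpha_K(t,x,y)$, applies it to replace each $\bar p^\alpha_{K_i}$ by the frozen density $\tilde p^y_\alpha$ at the \emph{same} freezing parameter $y$, performs the exact convolution $\int\tilde p^y_\alpha(\tau,x,z)\,\tilde p^y_\alpha(t-\tau,z,y)\,dz=\tilde p^y_\alpha(t,x,y)$, and then compares back. This is slick but glosses over a small point: \eqref{DOMINATION_DENS_GEL} as displayed is for $\tilde p_\alpha^y(t,x,y)$ (terminal point equal to the freezing point), whereas the proof needs $\tilde p^y_\alpha(\tau,x,z)\asymp\bar p^\alpha_K(\tau,x,z)$ uniformly in $z$ with $z\ne y$. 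That is indeed true under the uniform ellipticity \textbf{[A-2]} (the frozen density is a Gaussian/stable kernel whose parameter matrix is uniformly comparable to the identity, uniformly in $y$), but it is an extra step the paper leaves implicit.

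You avoid this entirely: for $\alpha=2$ you simply complete the square and obtain explicit constants $K=K_1\wedge K_2$, $C=\pi^{d/2}(K_1\wedge K_2)^{-d/2}$; for $\alpha\in(0,2)$ you use the \emph{isotropic} stable kernel $q_\alpha$, which carries no freezing parameter, via the scaling rewrite $\bar p^\alpha_K(t,x,y)\asymp K^{-\alpha}q_\alpha(K^\alpha t,x,y)$ and the elementary two-sided form $\bar p^\alpha_K(t,x,y)\asymp t/(|y-x|+Kt^{1/\alpha})^{d+\alpha}$. The computations are correct (in particular $K_1(t-\tau)+K_2\tau\in[(K_1\wedge K_2)t,(K_1\vee K_2)t]$ and $(K_1\vee K_2)^\alpha K_1^{-\alpha}K_2^{-\alpha}=(K_1\wedge K_2)^{-\alpha}$ are used correctly), and the constants you obtain depend only on $d,\alpha,K_1,K_2$, which is consistent with the statement since $K_1,K_2$ are themselves determined by \textbf{[A]} and $T$ where the proposition is applied. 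So the two proofs are in the same spirit, but yours is a bit more elementary in the Gaussian case and is cleaner in the stable case because it sidesteps the freezing-parameter subtlety.
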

\begin{proof}
Indeed, for all $\alpha \in (0,2]$, we have that for $t>0$, for all $x,y \in \R^d$, there exists $c,C,K>0$ such that:
\begin{equation}\label{DOMINATION_DENS_GEL}
c \bar{p}^\alpha_K(t,x,y) \le \tilde{p}_\alpha^y(t,x,y) \le C \bar{p}^\alpha_K(t,x,y).
\end{equation}

For the gaussian case, we refer to the seminal paper \cite{friedman:64} or Sheu \cite{sheu:91} for a stochastic control based approach. For the stable case $\alpha<2$, the reader may consult and Kolokolstov \cite{kolo:00}.
Thus, one easily gets:
$$
\int_{\R^d}\bar{p}^\alpha_{K_1}(\tau,x,z)\bar{p}^\alpha_{K_2}(t-\tau,z,y)dz \le C
\int_{\R^d}\tilde{p}^y_\alpha(\tau,x,z)\tilde{p}^y_\alpha(t-\tau,z,y)dz  =  C \tilde{p}^y_\alpha(t,x,y) \le C \bar{p}^\alpha_K(t,x,y).
$$
\end{proof}

Using the previous density, we are able to bound the various terms appearing above.

\begin{lem}\label{LemmeDerivDens}
For all multi index $\gamma, \eta\in \N^d$ such that $|\gamma|+|\eta| \le m$ if $\alpha >1$, and $|\gamma|+|\eta| \le m-1$ if $\alpha \le 1$, for all $x,y \in \R^d$, for all  $t\in[0,T]$, for all $k\in \leftB 0,n-1 \rightB$, there exists $C= C(\textbf{[A]},T,\gamma,\eta)>0$ such that the following bounds holds:
\begin{eqnarray}
|\partial_x^\gamma \partial_y^\eta p^d(t_k,x,y)| +|\partial_x^\gamma \partial_y^\eta p_n(t_k,x,y)| \le C t_k^{-\frac{|\gamma|+|\eta|}{\alpha}}\bar{p}^\alpha_K(t_k,x,y),\label{DerivDens1}\\
|\partial_x^\gamma \partial_y^\eta p(t,x,y)|\le C t^{-\frac{|\gamma|+|\eta|}{\alpha}}\bar{p}^\alpha_K(t,x,y),\label{DerivDens2}
\end{eqnarray}

Moreover, for all $\xi \in \R^d$, 
\begin{equation}\label{DerivDensDeux}
|\partial_x^\gamma p^d(t_k,x,x+\xi)| +|\partial_x^\gamma p_n(t_k,x,x+\xi)| \le C \bar{p}^\alpha_K(t_k,x,x+\xi).
\end{equation}

Eventually, when $\alpha <2$, denoting $\Phi(t_k,x,y) = \sum_{r=1}^\infty H^{(r,n)}(t_k,x,y)$, we have:
\begin{eqnarray}
\left|\partial_x^\gamma \partial_y^\eta\Phi(t_k,x,y) \right| &\le& C t_k^{-\frac{|\gamma|+|\eta|}{\alpha}} \bar{p}^\alpha_K(t_k,x,y) \left(1+ \frac{1 \wedge|x-y|}{t_k} \right), \label{DerivPhi}\\
\left|\partial_x^\gamma \Phi(t_k,x,x+\xi) \right| &\le& C \bar{p}^\alpha_K(t_k,x,x+\xi) \left( 1+ \frac{1 \wedge|\xi|}{t_k}  \right).\label{DerivPhiDeux}
\end{eqnarray}

\end{lem}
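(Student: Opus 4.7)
My plan is to derive each bound from the parametrix series representations in Theorems~\ref{parametrix_series} and \ref{parametrix_euler} (together with the series defining $p^d$), by differentiating term-by-term and iterating Proposition~\ref{SEMI_GROUP_PROP_PK}. Three preliminary building blocks are needed. First, derivative control on the frozen density: from the explicit Gaussian form when $\alpha=2$, and from the integral representation of $\tilde p_\alpha^y$ combined with the $\mathcal C^m$ regularity of the spectral measure when $\alpha<2$, one checks that
\[
|\partial_x^\gamma \partial_y^\eta \tilde p_\alpha^y(t,x,y)| \le C\, t^{-(|\gamma|+|\eta|)/\alpha}\bar p_K^\alpha(t,x,y),
\]
where $\partial_y^\eta$ includes the contribution from the freezing parameter via the chain rule. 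Second, a smoothing estimate on $H=(L-\tilde L^*)\tilde p_\alpha$: since $L$ and $\tilde L^*$ coincide at $x=y$, a Taylor expansion of $b$ and $\sigma$ in $y-x$ produces an integrable-in-time singularity
\[
|\partial_x^\gamma \partial_y^\eta H(t,x,y)| \le C\, t^{-1+\beta-(|\gamma|+|\eta|)/\alpha}\bar p_K^\alpha(t,x,y),
\]
for some $\beta>0$ linked to the regularity of the coefficients and to $\alpha$. Third, the Beta-function computation $\int_0^t s^{a-1}(t-s)^{b-1}\,ds = t^{a+b-1}B(a,b)$, combined with the semi-group property \eqref{SemiGroup}, which ensures that iterated time-space convolutions yield an extra factor $\prod_{j=1}^{r}B(j\beta,\beta)$ that keeps the parametrix series summable.

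With these in hand, I would prove \eqref{DerivDens2} by induction on the iteration order $r$ in the series $p=\sum_r \tilde p_\alpha\otimes H^{(r)}$: place $\partial_x^\gamma$ on the leftmost factor (producing $s_1^{-|\gamma|/\alpha}$ at the leftmost time $s_1$), place $\partial_y^\eta$ on the rightmost factor (producing $(t-s_r)^{-|\eta|/\alpha}$), and dominate each internal convolution by $\bar p_K^\alpha$ via \eqref{SemiGroup}. Accumulating the Beta factors and resumming in $r$ yields the claimed bound with the single global singularity $t^{-(|\gamma|+|\eta|)/\alpha}$. The estimates for $p^d$ and $p_n$ in \eqref{DerivDens1} follow from exactly the same scheme, but with $\otimes_n$ and the discrete generators \eqref{GEN_DISC_1}--\eqref{GEN_DISC_2}; one must check that replacing time integrals by Riemann sums over the grid $(t_i)_{0\le i\le k}$ preserves the Beta-function bound up to a constant uniform in $n$, which is a standard comparison using the monotonicity of $s\mapsto s^{a-1}(t-s)^{b-1}$ on each subinterval of length $1/n$.

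For the on-diagonal estimate \eqref{DerivDensDeux}, I would exploit the cancellation that arises when the freezing point is tied to the terminal point by $y=x+\xi$: applying $\partial_x^\gamma$ to $\tilde p_\alpha^{x+\xi}(t_k,x,x+\xi)$ through the chain rule, the Gaussian/stable exponent depends on $y-x-b(y)t_k = \xi - b(x+\xi)t_k$ and thus remains bounded by $|\xi|+Ct_k$ independently of $\gamma$, while each hit on $b$ or $\sigma$ produces only bounded derivatives. No negative power of $t_k$ appears, and propagating this cancellation through the series (using the same inductive scheme, with the on-diagonal estimate substituted for the first factor) delivers \eqref{DerivDensDeux}. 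For $\Phi=\sum_{r\ge 1}H^{(r,n)}$, the $r=1$ term is $H(t_k,x,y)$ itself, whose drift-type contribution $\langle b(x)-b(y),\partial_x\tilde p_\alpha\rangle$ is responsible for the factor $1\wedge|x-y|$, divided by $t_k$ because of the $t_k^{-1}$ singularity carried by $H$; higher iterates are controlled by the Beta-function machinery and absorbed into $\bar p_K^\alpha(t_k,x,y)$. The diagonal argument then gives \eqref{DerivPhiDeux}.

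The main obstacle is the bookkeeping of the $\partial_y^\eta$ differentiations in the stable case: the freezing parameter $y$ appears simultaneously in the non-local part of $\tilde L^*$ and in the integral defining $\tilde p_\alpha^y$, so each $\partial_y$ spawns several error terms that must still be shown to satisfy an estimate of the form $t^{-1+\beta'-|\gamma|/\alpha}\bar p_K^\alpha$ with $\beta'>0$ uniform in $r$. Controlling these uniformly (so that the resulting series remains summable) is precisely what forces the hypotheses $m>d+4$ and $|\gamma|+|\eta|\le m-(d+4)$. By contrast, the Brownian case is easier because all derivatives of $\tilde p_\alpha$ are explicit and the kernel $H$ carries a $t^{-1+1/2}$ singularity which is comfortably integrable.
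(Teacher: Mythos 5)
The overall skeleton you propose (parametrix series, Beta-function bookkeeping, semi-group property, on-diagonal cancellation at $y=x+\xi$) is the right circle of ideas and matches the paper, but the step where you ``place $\partial_x^\gamma$ on the leftmost factor and $\partial_y^\eta$ on the rightmost factor'' of the convolution chain is where the plan breaks. Doing so yields, in the $r$-th term of the series, a time integral of the shape
\[
\int_0^t s_1^{-|\gamma|/\alpha}\,(\cdots)\,(t-s_r)^{-|\eta|/\alpha}\,ds_1\cdots ds_r,
\]
and the inner factors coming from $H$ only smooth by $t^{-1+\omega}$ with $\omega=\alpha\wedge(1/\alpha)<1$. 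The resulting Beta integrals require $1-|\gamma|/\alpha>0$ and $1-|\eta|/\alpha>0$ (or suitably shifted exponents) to converge; as soon as $|\gamma|\ge\alpha$ or $|\eta|\ge\alpha$ — which is the generic case here since the lemma is meant for multi-indices up to order $m$ — the naive placement produces a divergent time integral. This is precisely why the paper does \emph{not} prove \eqref{DerivDens1}--\eqref{DerivDens2} that way: those two bounds are imported wholesale from Konakov--Mammen and Konakov--Menozzi, and the paper's actual work is only on \eqref{DerivDensDeux}, \eqref{DerivPhi}, \eqref{DerivPhiDeux}.

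For the new bounds, your on-diagonal intuition is correct: at $y=x+\xi$, differentiating in $x$ moves both the forward and backward arguments simultaneously, so no new singularity is created — this matches the paper's use of \eqref{DerivHDeux} and the inductive bound \eqref{DerivHN}. However, for \eqref{DerivPhi} (which is a two-sided derivative at generic $(x,y)$) the on-diagonal cancellation is not available, and your proposal does not contain the device that the paper uses to rescue it: split the discrete convolution sum into $i\le k/2$ and $i> k/2$, and in each half perform a change of variables (to $z\mapsto z+x$, resp.\ $z\mapsto z+y$) so that the derivative lands on the factor whose time argument is comparable to $t_k$, where the singularity is harmless. Without this splitting and shifting, the iteration for $\Phi$ faces exactly the same divergent Beta integral described above. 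You also need the ``alternative'' \eqref{ALTERNATIVE} — that either the left or the right $\bar p_K^\alpha$ factor is dominated by the global one, depending on whether $|\xi|$ is in the diagonal or off-diagonal regime — to transfer the $\frac{1\wedge|\cdot|}{\text{time}}$ factors from the convolution to the final estimate; simply ``absorbing higher iterates into $\bar p_K^\alpha$'' glosses over where the factor $\bigl(1+\frac{1\wedge|x-y|}{t_k}\bigr)$ comes from in the iterates $r\ge 2$. So the broad strategy is compatible with the paper's, but the key combinatorial/analytic device (time-split plus change of variables and the diagonal/off-diagonal alternative) is missing, and the Beta-function bookkeeping you propose would, as written, fail for the multi-indices the lemma is actually asserted for.
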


\begin{REM}
We point out that in equations \eqref{DerivDensDeux} and \eqref{DerivPhiDeux}, 
despite the presence of derivations, there are no singularities induced by them, as the derivation argument appears in both the forward and the backward arguments.
This will be a key point in the proof of Theorem \ref{ExpDerv}.
\end{REM}

\begin{proof}
For the Brownian case, all the above estimates are proved in \cite{kona:mamm:02}.
We thus focus on the stable case.
In Konakov Menozzi \cite{kona:meno:10}, the bound \eqref{DerivDens1} and \eqref{DerivDens2} are given.
To get the bound \eqref{DerivDensDeux}, we prove \eqref{DerivPhi} and \eqref{DerivPhiDeux}, using the following estimates proved in \cite{kona:meno:10}:
\begin{eqnarray}
\left|\partial_x^\gamma \partial_y^\eta H(t,x,y) \right| &\le& C_1 t^{-\frac{|\gamma|+|\eta|}{\alpha}} \bar{p}^\alpha_K(t,x,y) \left(1+ \frac{1 \wedge|x-y|}{t} \right),\label{DerivH} \\
\left|\partial_x^\gamma H(t,x,x+\xi) \right| &\le& C_1 \bar{p}^\alpha_K(t,x,x+\xi) \left( 1+ \frac{1 \wedge|\xi|}{t}  \right). \label{DerivHDeux}
\end{eqnarray}

We then derive \eqref{DerivDensDeux} for the derivative of the densities using the expansion:
\begin{equation}\label{DefPhiSum}
p^d(1,x,y) = \tilde{p}_\alpha(t,x,y) + \frac{1}{n}\sum_{i=0}^{n-1}\int_{\R^d} \tilde{p}_\alpha \left(t_i,x,z \right) \Phi \left(1-t_i,z,y \right) dz.
\end{equation}

To get the bound on $p_n$, one may proceed similarly.
Denoting $\Phi_n(t_k,x,y) = \sum_{r=1}^\infty H^{(r,n)}_n(t_k,x,y)$, we investigate its derivatives and prove
$
\left|\partial_x^\gamma \partial_y^\eta\Phi_n(t_k,x,y) \right| \le C t_k^{-\frac{|\gamma|+|\eta|}{\alpha}} \bar{p}^\alpha_K(t_k,x,y) \left(1+ \frac{1 \wedge|x-y|}{t_k} \right)$
and 
$\left|\partial_x^\gamma \Phi_n(t_k,x,x+\xi) \right| \le C \bar{p}^\alpha_K(t_k,x,x+\xi) \left( 1+ \frac{1 \wedge|\xi|}{t_k}  \right)$, by proving a similar estimate to \eqref{DerivH} and \eqref{DerivHDeux} with $H_n$ instead of $H$. The estimate on $p_n$ will then be given by the counter part of representation \eqref{DefPhiSum} for $p_n$. We do not enter into the computational details.


We begin with \eqref{DerivPhiDeux}.
Observe that due to the presence of the derivation parameter in both the forward and the backward arguments, the derivatives does not yield any additional singularities.
From \eqref{DerivHDeux}, we prove by induction the following:
\begin{equation}\label{DerivHN}
\left|\partial_x^\gamma H^{(r,n)}(t_k,x,x+\xi) \right| \le C_{r} t_k^{(r-1)\omega} \bar{p}^\alpha_K(t_k,x,x+\xi) \left(1 +\frac{1\wedge|\xi|}{t_k}\right),
\end{equation}
where $\omega = \frac{1}{\alpha}\wedge \alpha$, and the sequence of constants $(C_r)_{r \ge 0}$ is defined recursively by:
$$C_{r+1} = C_\gamma C_r C  \max \Big( \frac{1}{r\omega} , B\big((r-1)\omega+1,\omega \big)\Big) , \ C_1>0,$$
where $C_1$ is the constant appearing in bounds \eqref{DerivHDeux} and \eqref{DerivHN}, and $C$ is a positive constant independent of $r,\gamma, x,\xi$.
For $r=1$, the bound is exactly \eqref{DerivHDeux}.
Suppose that it holds for $r\ge1$.
We have using the induction hypothesis, equation \eqref{DerivHDeux} and Leibnitz's formula:
\begin{eqnarray}
\left|\partial_x^\gamma H^{(r+1,n)}(t_k,x,x+\xi) \right| &\le& \sum_{\eta=0}^\gamma C^\eta_\gamma \frac{1}{n} \sum_{i=0}^{k-1} \int_{\R^{d}} \left|  \partial_x^\eta H^{(r,n)}(t_i,x,z+x) \right| \left|  \partial_x^{\gamma-\eta} H(t_k-t_i,z+x,x+\xi) \right|  dz \nonumber\\
&\le& C_\gamma C_r C\frac{1}{n} \sum_{i=0}^{k-1} \int_{\R^{d}} t_i^{(r-1)\omega} \bar{p}^\alpha_K(t_i,x,x+z)\left(1 +\frac{1\wedge|z|}{t_i}\right) \nonumber\\
&&\quad \times \ \ \bar{p}^\alpha_K(t_k-t_i,x+z,x+\xi) \left( 1+ \frac{1 \wedge|\xi-z|}{t_k-t_i}  \right) dz. \label{ConvNCoup}
\end{eqnarray}

We decompose, the integral: 
\begin{eqnarray}
\int_{\R^{d}} \bar{p}^\alpha_K(t_i,x,x+z)\left(1 +\frac{1\wedge|z|}{t_i}\right) \bar{p}^\alpha_K(t_k-t_i,x+z,x+\xi) \left( 1+ \frac{1 \wedge|\xi-z|}{t_k-t_i}  \right)  dz\label{ConvDeuxCoup} = I_1+I_2+I_3+I_4
\end{eqnarray}
%
where:
\begin{eqnarray*}
I_1&=&\int_{\R^{d}} \bar{p}^\alpha_K(t_i,x,x+z) \bar{p}^\alpha_K(t_k-t_i,x+z,x+\xi)  dz,\\
I_2&=&\int_{\R^{d}} \bar{p}^\alpha_K(t_i,x,x+z)\bar{p}^\alpha_K(t_k-t_i,x+z,x+\xi)  \frac{1 \wedge|\xi-z|}{t_k-t_i}  dz\\
I_3&=&\int_{\R^{d}} \bar{p}^\alpha_K(t_i,x,x+z)\frac{1\wedge|z|}{t_i} \bar{p}^\alpha_K(t_k-t_i,x+z,x+\xi)  dz,\\
I_4&=&\int_{\R^{d}} \bar{p}^\alpha_K(t_i,x,x+z)\frac{1\wedge|z|}{t_i}\bar{p}^\alpha_K(t_k-t_i,x+z,x+\xi)  \frac{1 \wedge|\xi-z|}{t_k-t_i}   dz.
\end{eqnarray*}

The first one $I_1$ is bounded by $C\bar{p}^\alpha_K(t_k,x,x+\xi)$ thanks to the semi-group property (Proposition \ref{SEMI_GROUP_PROP_PK}). 
By symmetry, $I_2$ and $I_3$ are treated the same way. We focus on $I_2$.
In the rest, we denote by the symbol $\asymp$ the relation:
$$
f(x) \asymp g(x) \Leftrightarrow \exists C>1,\  \forall x \in \R^{d}:\   C^{-1} g(x) \le f(x) \le C g(x).
$$
We argue differently, according to the ratio $|\xi|/t_k^{1/\alpha}$. 
\begin{itemize}
\item Suppose first that $|\xi| \le C t_k^{1/\alpha}$.
Then, the diagonal estimate holds: $\bar{p}^\alpha_K(t_k,x,x+\xi)\asymp t_k^{-d/\alpha}$.
On the one hand, if $i \ge k/2$, then $t_i \asymp t_k$. Since the diagonal estimate is a global bound, one has:
$\bar{p}^\alpha_K(t_i,x,x+z)\le C t_i^{-d/\alpha} \asymp C t_k^{-d/\alpha} \asymp C \bar{p}^\alpha_K(t_k,x,x+\xi) $.
On the other hand, when $i \le k/2$, then $t_k-t_i \asymp t_k$, and we have
$\frac{1}{t_k-t_i}\bar{p}^\alpha_K(t_k-t_i,x+z,x+\xi) \le C \frac{1}{t_k-t_i} (t_k-t_i)^{-d/\alpha}  \asymp C \frac{1}{t_k}\bar{p}^\alpha_K(t_k,x,x+\xi)$.
\item Suppose now that $|\xi| \ge C t_k^{1/\alpha}$. 
Then, the off-diagonal estimate holds: $\bar{p}^\alpha_K(t_k,x,x+\xi)\asymp \frac{t_k}{|\xi|^{d+\alpha}}$.
Now, since $|\xi| \le |z| + |\xi-z|$, we have either $1/2|\xi| \le |z|$, or $1/2|\xi| \le |\xi-z|$.
In the first case the off-diagonal estimate holds for the first density:
$\bar{p}^\alpha_K(t_i,x,x+z)\asymp \frac{t_i}{|z|^{d+\alpha}} \le C \frac{t_k}{|\xi|^{d+\alpha}} \asymp C \bar{p}^\alpha_K(t_k,x,x+\xi)$.
In the second case, the second density is off-diagonal and we can write:
$\frac{1}{t_k-t_i}\bar{p}^\alpha_K(t_k-t_i,x+z,x+\xi) \le C \frac{1}{t_k-t_i}\frac{t_k-t_i}{|\xi-z|^{d+\alpha}}\le \frac{1}{t_k} \frac{t_k}{|\xi|^{d+\alpha}} \asymp C \frac{1}{t_k} \bar{p}^\alpha_K(t_k,x,x+\xi)$.
\end{itemize}

Therefore, we always have the alternative:
\begin{equation}\label{ALTERNATIVE}
 \bar{p}^\alpha_K(t_i,x,x+z) \le C \bar{p}^\alpha_K(t_k,x,x+\xi) 
 \mbox{, or } \frac{1}{t_k-t_i} 
 \bar{p}^\alpha_K(t_k-t_i,x+z,x+\xi) \le C \frac{1}{t_k}\bar{p}^\alpha_K(t_k,x,x+\xi).
 \end{equation}
 
Combining this alternative with the smoothing effect of the Parametrix kernel $H$ reflected in the bound (see Section 3 of Kolokoltsov \cite{kolo:00}):
\begin{equation}\label{smoothing}
\forall \tau\in(0,T), \ \forall y\in\R^d,  \  \int_{\R^d} \frac{1 \wedge|y-z|}{\tau} \bar{p}^\alpha_K(\tau,z,y) dz \le \tau^{(\alpha \wedge 1)-1},
\end{equation}
gives that the second and third terms are bounded by: 
$$
I_2+I_3
  \le C \bar{p}^\alpha_K(t_k,x,x+\xi)\left(t_i^{(\alpha \wedge 1)-1}+ (t_k-t_i)^{(\alpha \wedge 1)-1} +\frac{1\wedge|\xi|}{t_k}\right).
$$

We now turn to the last term in \eqref{ConvDeuxCoup}, that writes:
$$
I_4=\int_{\R^{d}} \frac{1\wedge|z|}{t_i}\bar{p}^\alpha_K(t_i,x,x+z)\frac{1 \wedge|\xi-z|}{t_k-t_i}  \bar{p}^\alpha_K(t_k-t_i,x+z,x+\xi)  dz.
$$
When $\bar{p}^\alpha_K(t_k,x,x+\xi)$ is in the diagonal regime, that is, when $|\xi|\le Ct_k^{\frac{1}{\alpha}}$, we have:
$$
 \frac{1\wedge|z|}{t_i}\bar{p}^\alpha_K(t_i,x,x+z) \le C t_i^{-d/\alpha} t_i^{\frac{1}{\alpha}-1}\ {\rm and } \  \frac{1\wedge|\xi-z|}{t_k-t_i}\bar{p}^\alpha_K(t_k-t_i,x+z,x+\xi) \le C (t_k-t_i)^{-d/\alpha} (t_k-t_i)^{\frac{1}{\alpha}-1}.
$$
We prove the first inequality, the second one is obtained with the same arguments.
Let us assume first that $|z|\le Ct_i^{1/\alpha}$. In that  case the diagonal estimate holds for $\bar{p}^\alpha_K(t_i,x,x+z)$, thus:
$$
\frac{1\wedge|z|}{t_i}\bar{p}^\alpha_K(t_i,x,x+z) \le \frac{1\wedge|z|}{t_i}t_i^{-d/\alpha} 
\le \frac{|z|}{t_i} t_i^{-d/\alpha} \le C  t_i^{1/\alpha-1}\times t_i^{-d/\alpha}.
$$
On the other hand, when the off-diagonal estimate holds for $\bar{p}^\alpha_K(t_i,x,x+z)$, that is when $|z| >Ct_i^{1/\alpha}$, we have:
$$
\frac{1\wedge|z|}{t_i}\bar{p}^\alpha_K(t_i,x,x+z) \le \frac{1\wedge|z|}{t_i} \frac{t_i}{|z|^{d+\alpha}} \le C \frac{1}{|z|^{d+\alpha-1}} \le Ct_i^{-\frac{1}{\alpha}(d+\alpha-1)}=Ct_i^{-\frac{d}{\alpha}} t_i^{\frac{1}{\alpha}-1}.
$$
Thus, in both cases, we obtained the announced bound.

Now, if $i \le k/2$, $t_k \asymp t_k-t_i$, one has: 
$$
\frac{1\wedge|\xi-z|}{t_k-t_i}\bar{p}^\alpha_K(t_k-t_i,x+z,x+\xi) \le C \bar{p}^\alpha_K(t_k,x,x+\xi) (t_k-t_i)^{\frac{1}{\alpha}-1}.
$$
Then, using \eqref{smoothing}, $I_4\le \bar{p}^\alpha_K(t_k,x,x+\xi)  (t_k-t_i)^{\frac{1}{\alpha}-1} t_i^{(\alpha \wedge 1)-1}$.
Similarly, when $i > k/2$, we use that $t_k \asymp t_i$, to get 
$$
\frac{1\wedge|z|}{t_i}\bar{p}^\alpha_K(t_i,x,x+z) \le C \bar{p}^\alpha_K(t_k,x,x+\xi) t_i^{\frac{1}{\alpha}-1}.
$$
Consequently, when $|\xi| \le C t_k^{\frac{1}{\alpha}}$, $I_4$ is bounded in the following way:
$$
I_4 \le C \bar{p}^\alpha_K(t_k,x,x+\xi) \Big((t_k-t_i)^{\frac{1}{\alpha}-1} t_i^{(\alpha \wedge 1)-1} + t_i^{\frac{1}{\alpha}-1} (t_k-t_i)^{(\alpha \wedge 1)-1}\Big).
$$

Assume now that $|\xi| > C t_k^{\frac{1}{\alpha}}$. In that case using similar arguments one may prove that we have either:
$$
\frac{1\wedge|\xi-z|}{t_k-t_i}\bar{p}^\alpha_K(t_k-t_i,x+z,x+\xi) \le C \frac{1\wedge|\xi|}{t_k}\bar{p}^\alpha_K(t_k,x,x+\xi),
$$
or:
$$
\frac{1\wedge|z|}{t_i}\bar{p}^\alpha_K(t_i,x,x+z) \le C \frac{1\wedge|\xi|}{t_k}\bar{p}^\alpha_K(t_k,x,x+\xi).
$$
Thus, using \eqref{smoothing}, $I_4$ is now bounded as follows:
$$
I_4 \le  C\frac{1\wedge|\xi|}{t_k}\bar{p}^\alpha_K(t_k,x,x+\xi) \Big(t_i^{(\alpha \wedge 1)-1} + (t_k-t_i)^{(\alpha \wedge 1)-1} \Big).
$$

Plugging this estimate in \eqref{ConvDeuxCoup} in turn implies:
\begin{eqnarray}\label{CTRL_INT_INNER}
\int_{\R^{d}} \bar{p}^\alpha_K(t_i,x,x+z)\left(1 +\frac{1\wedge|z|}{t_i}\right) \bar{p}^\alpha_K(t_k-t_i,x+z,x+\xi) \left( 1+ \frac{1 \wedge|y-z|}{t_k-t_i}  \right)  dz  \nonumber\\
\le C \bar{p}^\alpha_K(t_k,x,x+\xi) \Biggl( \Big((t_k-t_i)^{\frac{1}{\alpha}-1} t_i^{(\alpha \wedge 1)-1} + t_i^{\frac{1}{\alpha}-1} (t_k-t_i)^{(\alpha \wedge 1)-1}\Big) \nonumber \\
+ \frac{1\wedge|\xi|}{t_k}\Big(t_i^{(\alpha \wedge 1)-1} + (t_k-t_i)^{(\alpha \wedge 1)-1} \Big) \Biggr).
\end{eqnarray}

Plugging bound \eqref{CTRL_INT_INNER} in \eqref{ConvNCoup} yields:

\begin{eqnarray}\label{CTRL_DERIV_H_N}
\left|\partial_x^\gamma H^{(r+1,n)}(t_k,x,x+\xi) \right|
&\le& C_\gamma C_r C  \bar{p}^\alpha_K(t_k,x,x+\xi) \frac{1}{n} \sum_{i=0}^{k-1}  t_i^{(r-1)\omega}    \Big((t_k-t_i)^{\frac{1}{\alpha}-1} t_i^{(\alpha \wedge 1)-1} + t_i^{\frac{1}{\alpha}-1} (t_k-t_i)^{(\alpha \wedge 1)-1}\Big) \nonumber \\
&&+ C_\gamma C_r C  \frac{1\wedge|\xi|}{t_k}\bar{p}^\alpha_K(t_k,x,x+\xi) \frac{1}{n} \sum_{i=0}^{k-1}t_i^{(r-1)\omega} \Big(t_i^{(\alpha \wedge 1)-1} + (t_k-t_i)^{(\alpha \wedge 1)-1} \Big)  . 
\end{eqnarray}

Now, assume first that $\alpha \ge 1$.
Recalling that $\omega = \frac{1}{\alpha}\wedge \alpha = \frac{1}{\alpha}$, the above bound becomes:

\begin{align*}
\left|\partial_x^\gamma H^{(r+1,n)}(t_k,x,x+\xi) \right|
&\le C_\gamma C_r C  \bar{p}^\alpha_K(t_k,x,x+\xi) \frac{1}{n} \sum_{i=0}^{k-1}  t_i^{(r-1)\frac{1}{\alpha}}    \Big((t_k-t_i)^{\frac{1}{\alpha}-1} + t_i^{\frac{1}{\alpha}-1}\Big) \nonumber \\
& + C_\gamma C_r C  \frac{1\wedge|\xi|}{t_k}\bar{p}^\alpha_K(t_k,x,x+\xi) \frac{1}{n} \sum_{i=0}^{k-1}t_i^{(r-1)\frac{1}{\alpha}}\\
&\le C_{r+1}  t_k^{\frac{r}{\alpha}} \bar{p}^\alpha_K(t_k,x,x+\xi) \left(1 + \frac{1\wedge|\xi|}{t_k} \right),
\end{align*}

\noindent where 
$C_{r+1} = C_\gamma C C_r \max \left( B\left((r-1)\frac{1}{\alpha}+1,\frac{1}{\alpha}\right) , \frac{\alpha}{r}\right)$ and we used that $t_k^{(r-1)\frac{1}{\alpha}+1}\le t_k^{\frac{r}{\alpha}}$, since $\alpha \ge 1$ and $t_k\le1$.

On the other hand, when $\alpha \le 1$, $\omega=\alpha$ one similarly proves that:
%

\begin{eqnarray*}
\left|\partial_x^\gamma H^{(r+1,n)}(t_k,x,x+\xi) \right|
\le C_{r+1} t_k^{r\alpha} \bar{p}^\alpha_K(t_k,x,x+\xi) \left( 1 + \frac{1\wedge|\xi|}{t_k}  \right),
\end{eqnarray*}
with $C_{r+1} = C_\gamma C_r C  \max\Big( \frac{1}{r\alpha} , B\big((r-1)\alpha+1,\alpha \big)\Big) $.
This constant is coherent with the previous one, setting $C_{r+1} = C_\gamma C_r C  \max \Big( \frac{1}{r\omega} , B\big((r-1)\omega+1,\omega \big)\Big) $.
This concludes the proof of bound \eqref{DerivHN}.
%
%
%
%
Observe that by definition of Euler's Beta function, $(C_r)_{r \ge 0}$ produces a convergent series.
To get the bound  \eqref{DerivPhiDeux}, we sum bounds \eqref{DerivHN}. In order to get the bound \eqref{DerivDensDeux}, we now plug the bound \eqref{DerivPhiDeux} in equation \eqref{DefPhiSum}, and from similar arguments, one derives \eqref{DerivDensDeux}.

To prove \eqref{DerivPhi}, we show by induction the following bound:
\begin{equation}
\left| \partial_x^\gamma \partial_y^\eta H^{(r,n)}(t_k,x,y) \right| \le C_r t_k^{(r-1)\omega-\frac{|\gamma|+|\eta|}{\alpha}}C_1 \bar{p}^\alpha_K(t_k,x,y) \left(1+ \frac{1 \wedge|x-y|}{t_k} \right).
\end{equation}
For $r=1$, this bound is exactly \eqref{DerivH}.
To get the estimate for $r+1$, we proceed as above.
$$
\left| \partial_x^\gamma \partial_y^\eta H^{(r+1,n)}(t_k,x,y) \right| = \left| \partial_x^\gamma \partial_y^\eta \frac{1}{n}\sum_{i=0}^{k-1} \int_{\R^d} H^{(r,n)}(t_i,x,z)H(t_k-t_i,z,y) dz \right| \le I+II,
$$
where
\begin{eqnarray*}
I= \left| \partial_x^\gamma \partial_y^\eta \frac{1}{n}\sum_{i\le k/2} \int_{\R^d} H^{(r,n)}(t_i,x,z)H(t_k-t_i,z,y)  dz \right|, \ 
II= \left| \partial_x^\gamma \partial_y^\eta \frac{1}{n}\sum_{i\ge k/2} \int_{\R^d} H^{(r,n)}(t_i,x,z)H(t_k-t_i,z,y)  dz \right|.
\end{eqnarray*}

In $I$, the time parameter $t_i$ is small, thus, the singularities induced by the derivation of $H^{(r,n)}(t_i,x,z)$ are the worst. In order to get rid of them, we make use of a change of variable to get:
\begin{eqnarray*}
 I=\left| \partial_x^\gamma \frac{1}{n}\sum_{i\le k/2}\int_{\R^d} H^{(r,n)}(t_i,x,z)\partial_y^\eta H(t_k-t_i,z,y)  dz \right|
 =\left| \partial_x^\gamma  \frac{1}{n}\sum_{i\le k/2} \int_{\R^d} H^{(r,n)}(t_i,x,z+x)\partial_y^\eta H(t_k-t_i,z+x,y)  dz \right|.
\end{eqnarray*}

\noindent Now, from equations \eqref{DerivHN}, \eqref{DerivH} and Leibnitz's formula we derive:

\begin{eqnarray*}
I &\le&\sum_{\beta=0}^\gamma C^\beta_\gamma \frac{1}{n}\sum_{i\le k/2} \int_{\R^d} \left| \partial_x^\beta H^{(r,n)}(t_i,x,z+x) \right| \left| \partial_x^{\gamma-\beta}\partial_y^\eta H(t_k-t_i,z+x,y)\right|  dz\\
&\le& C_\gamma C_n \frac{1}{n}\sum_{i\le k/2} \int_{\R^d} t_i^{(r-1)\omega} \left(1+\frac{1\wedge|z|}{t_i} \right) \bar{p}^\alpha_K(t_i,x,x+z) \\
&&\times (t_k-t_i)^{-\frac{|\gamma|+|\eta|}{\alpha}} \left(1+\frac{1\wedge|y-x-z|}{t_k-t_i} \right) \bar{p}^\alpha_K(t_k-t_i,x+z,y)  dz.\\
&\le& C_{n+1} t_k^{r\omega-\frac{|\gamma|+|\eta|}{\alpha}} \bar{p}^\alpha_K(t_k,x,y) .
\end{eqnarray*}
where we used that $t_k \asymp t_k-t_i$ for $i \le k/2$ for the last inequality. Note that once again, the series $\sum_{r\ge1} C_r$ converges. For $II$, we proceed with similar arguments. In this case, we use the change variables $w=z+y$ instead.

\end{proof}

\begin{proof}[Proof of Theorem \ref{ExpDerv}.]

The coefficients are the sum of two terms.
We only focus on the first term, the second term can be treated similarly.
From the definition of $\otimes_n$, we have:

$$
\partial_y^\gamma p\otimes_n \big(L-\tilde{L}^*\big)^{k+1}p^d(1,x,y)
=\partial_y^\gamma \frac{1}{n} \sum_{i=0}^{n-1}\int_{\R^d} p(t_i,x,z) 
\big(L-\tilde{L}^*\big)^{k+1}p^d(1-t_i,z,y) dz.
$$

To deal with the singularities coming from the derivatives we split the sum over $i$ in two parts:

\begin{eqnarray*}
\partial_y^\gamma p\otimes_n \big(L-\tilde{L}^*\big)^{k+1}p^d(1,x,y)
&=&\partial_y^\gamma \frac{1}{n} \sum_{i < n/2}\int_{\R^d} p(t_i,x,z) 
\big(L-\tilde{L}^*\big)^{k+1}p^d(1-t_i,z,y) dz \label{SUM_1}\\
&&+
\partial_y^\gamma \frac{1}{n} \sum_{i\ge n/2}\int_{\R^d} p(t_i,x,z) 
\big(L-\tilde{L}^*\big)^{k+1}p^d(1-t_i,z,y) dz \label{SUM_2}\\
&=&S_1 + S_2.
\end{eqnarray*}

For $S_1$, the time parameter is small, thus the singularities brought by the derivation in $y$ and the generators are negligible.
Indeed, exchanging the derivation and the integral:
$$S_1
=  \frac{1}{n} \sum_{i < n/2}\int_{\R^d} p(t_i,x,z) 
\partial_y^\gamma \left( \big(L-\tilde{L}^*\big)^{k+1}p^d(1-t_i,z,y) \right) dz.
$$
 
 From bound \eqref{DerivDens1} in Lemma \ref{LemmeDerivDens}, we derive:
 

 \begin{eqnarray*}
\left|\partial_y^\gamma \left( \big(L-\tilde{L}^*\big)^{k+1}p^d (1-t_i,z,y)\right)\right| &\le& C\left(1-t_i\right)^{-k-1-\frac{\gamma}{\alpha}} \bar{p}^\alpha_K\left(1-t_i,z,y\right).
\end{eqnarray*}
The right hand side of the previous equation is bounded uniformly in $y$, thus, from the Lebesgue theorem, we can derive under the integral.
Now, since $p(t_i,x,z)\le C \bar{p}^\alpha_K(t_i,x,z)$, this sum yields by a semi-group property:
\begin{eqnarray*}
\left|\partial_y^\gamma \frac{1}{n} \sum_{i < n/2}\int_{\R^d} p(t_i,x,z) 
\big(L-\tilde{L}^*\big)^{k+1}p^d(1-t_i,z,y) dz \right|
&\le& C \frac{1}{n} \sum_{i < n/2}
\int_{\R^d} \bar{p}^\alpha_K(t_i,x,z)
\left(1-t_i\right)^{-\frac{\gamma}{\alpha}-k-1} \bar{p}^\alpha_K\left(1-t_i,z,y\right).\\
& =& C\bar{p}^\alpha_K(1,x,y).
\end{eqnarray*}
We now turn to the second sum.
When $i \ge n/2$, by an integration by parts it follows
\begin{eqnarray*}
S_2&=&\partial_y^\gamma \frac{1}{n} \sum_{i\ge n/2}\int_{\R^d} p(t_i,x,z) 
\big(L-\tilde{L}^*\big)^{k+1}p^d(1-t_i,z,y) dz\\
&=&\partial_y^\gamma \frac{1}{n} \sum_{i\ge n/2}\int_{\R^d} \Big(\big(L-\tilde{L}^*\big)^{k+1} \Big)^T p(t_i,x,z) 
p^d(1-t_i,z,y) dz.
\end{eqnarray*}

\noindent where $\Big(\big(L-\tilde{L}^*\big)^{k+1} \Big)^T$ stands for the adjoint of $\big(L-\tilde{L}^*\big)^{k+1}$, which is well defined thanks to the smoothness of the coefficients $b$ and $\sigma$. The operator $L-\tilde{L}^*$ is an integro-differential operator (a derivative of order $\alpha$), so that
the operator $\Big(\big(L-\tilde{L}^*\big)^{k+1} \Big)^T$ is still an integro-differential operator which yields singularity of the same order as $(L-\tilde{L}^*)^{k+1}$, thus, applied to $p$ yields singularities which are still negligible since $i\geq n/2$. However, the derivative $\partial_y^\gamma$ will affect $p^d(1-t_i,z,y)$, thus, giving additional singularities so that beforehand we make use of the change of variable: $z=y-u$ to derive
\begin{eqnarray*}
S_2 = \frac{1}{n} \sum_{i\ge n/2} \sum_{\eta =0}^\gamma C_\gamma^\eta \int_{\R^d}  \partial_y^{\gamma-\eta} \left[\Big(\big(L-\tilde{L}^*\big)^{k+1} \Big)^T p\left(t_i,x,y-u\right) \right]\partial_y^\eta p^d\left(1-t_i,y-u,y\right) du.
\end{eqnarray*}
Now, bound \eqref{DerivDensDeux} of Lemma \ref{LemmeDerivDens} gives:  $\forall \eta \in \N^{*}$,  $\exists C>0$ s.t.: $
\left| \partial_y^{\eta} p^d\left(1-t_i,y-u,y\right) \right| \le C \bar{p}^\alpha_K\left(1-t_i,y-u,y\right)$.
On the other hand, since $i\ge n/2$, the singularities of the derivatives on the first density are negligible.
We thus get from a semi group property:

\begin{eqnarray*}
|S_2| \le  \frac{C}{n} \sum_{i\ge n/2} \sum_{\eta =0}^\gamma C_\gamma^\eta 2^{\frac{|\gamma|+|\eta|}{\alpha}+k+1} \int_{\R^d}  \bar{p}^\alpha_K\left(t_i,x,y-u\right) \bar{p}^\alpha_K\left(1-t_i,y-u,y\right)du
\le C_\gamma \bar{p}^\alpha_K\left(1,x,y\right).
\end{eqnarray*}

In order to prove that the expansion \eqref{EXP_DERIV_ DENS} makes sense, it remains to prove the bound on the remainder.
Since the expansion \eqref{EXP_DERIV_ DENS} is made of two contributions $p-p^d$ and $p^d-p_n$, the remainder $R(x,y)$ also splits in two terms $R(x,y) = R_1(1,x,y) + R_2(1,x,y)$, each being a remainder respectively of the expansion of $p-p^d$ and $p^d-p_n$, with:
\begin{eqnarray*}
R_1(1,x,y) &=&  \sum_{r\ge 0} (Q_M\otimes_n H^{(r,n)})(1,x,y),\\
Q_M(t_i,x,y)&=& \frac{1}{M!} \sum_{i=0}^{k-1}\int_{i/n}^{(i+1)/n} \left[ n\left( u-i/n\right)\right]
\int_0^1(1-\delta)^{M-1}  \int \frac{\partial^M}{\partial s ^M} \left[ p(s,x,z) H(t_i-s,z,y)\right]_{s=t_i+\delta(u-t_i)} dz d\delta du,
\end{eqnarray*}
and
\begin{eqnarray*}
R_2(t,x,y)&=& \frac{1}{(M+1)!} \int_0^1 (1-\tau)^M \biggl[p^d \otimes_n \Big(\tilde{L}_* - \tilde{L}^* \Big)^{M+1}\tilde{p}^\Delta_\tau \biggr](t,x,y)\\
\tilde{p}^\Delta_\tau(t,x,y) &=& \sum_{r\ge0} \tilde{p}_\tau \otimes_n H^{(r,n)}_n(t,x,y); \ H_n(t,x,y) = (L_n-\tilde{L}_n^*)\tilde{p}_\alpha(t ,x,y),
\end{eqnarray*}
where $\tilde{p}_\tau(t,x,y) = \int_{\R^{d}} \tilde{p}_\alpha^x(\tau \Delta,x,z)\tilde{p}_\alpha^y(t-\tau \Delta , z,y )dz$ and $L_n$ and $\tilde{L}_n^*$ stands for the discrete generators defined in equations \eqref{GEN_DISC_1} and \eqref{GEN_DISC_2}.
The reader may consult Konakov and Mammen \cite{kona:mamm:02} and Konakov and Menozzi \cite{kona:meno:10} for more details.

We focus on the estimation of $R_1$. 
The arguments for $R_2$ are similar and hinted at the end of this proof.
Observe that we can write:
\begin{eqnarray}\label{CONV_R_M}
R_1(1,x,y) &=& Q_M(1,x,y) + \frac{1}{n}\sum_{i=0}^{n-1} \int Q_M \left(t_i,x,z \right) \Phi \left(1-t_i,z,y\right)dz,\\
\Phi\left(1-t_i,z,y\right) &=& \sum_{r = 1}^{+\infty} H^{(r,n)}\left(1-t_i,z,y\right).
\end{eqnarray}

Using Kolmogorov's forward and backward equations, one shows by induction on $M$ that $Q_M(t,x,y)$ can be written as:
\begin{eqnarray*}
Q_M(t_k,x,y)&=& \frac{1}{M!} \sum_{i=0}^{k-1}\int_{t_i}^{(i+1)/n} \left[ n\left( u-t_i\right)\right]
\int_0^1(1-\delta)^{M-1}\\
&& \int p\left( t_i+\delta \left(u- t_i\right),x,z\right) (L-\tilde{L}^*)^{M+1} \tilde{p}_\alpha\left(t_k- (t_i+\delta \left(u- t_i\right)),z,y \right) dz d\delta du.
\end{eqnarray*}
See e.g. equation (4.5) in \cite{kona:mamm:02} for the brownian case and equation (4.4) in \cite{kona:meno:10} for the stable case.
Thus, apart from the additional integrations w.r.t. $\delta$ and $u$, we see that $Q_M$ is of the same nature that the terms we already dealt with. Therefore, adapting the above arguments allows us to derive $\alpha$-stable estimates for this term.
In fact, a precise study of $Q_M$ allows us to derive that  $\forall i \in \leftB1,n \rightB$, $\forall\gamma\ge 0$, $\exists C>0$ such that:
\begin{eqnarray}\label{EST_DERIV_R_M}
\left| \partial_y^\gamma Q_M\left(t_k,x,y\right)\right| \le C t_k^{-\frac{\gamma}{\alpha}-(M+1)}\bar{p}^\alpha_K\left(t_k,x,y\right).
\end{eqnarray}
To get this bound, we actually show that it holds for 
$$
\bar{Q}_M\left(t_k,x,y\right) = \int p\left(t,x,z\right) (L-\tilde{L}^*)^{M+1} \tilde{p}_\alpha\left(t_k-t,z,y \right) dz,
$$
independently of $t \in [0,1]$. The arguments differs depending if $t$ is closer to 0 or $t_k$.
In the first case (say, $t\le t_k/2$), the singularities induced by taking the derivative along $y$ in $\tilde{p}\left(t_k-t,z,y \right)$ are bounded by to $t_k^{-\frac{\gamma}{\alpha}-(M+1)}$, which is the announced singularity.
When $t$ is closer to $t_k$ (say $t> t_k/2$), we transfer the operator $(L-\tilde{L}^*)^{M+1}$ on $p$ by taking the adjoint, and we change variables to $z=y-u$. The singularities in $\partial_y^\gamma\Big( (L-\tilde{L}^*)^{M+1}\Big)^T p\left(t,x,y-u\right)$ then yields the announced $t_k^{-\frac{\gamma}{\alpha}-(M+1)}$, and we conclude using the semi-group property of Proposition \ref{SEMI_GROUP_PROP_PK}.
Thus, for $t_k=1$, we have $|\partial_y^\gamma Q_M(1,x,y)|\le C \bar{p}^\alpha_K(1,x,y)$, which is the announced bound.

Now, for the second part of \eqref{CONV_R_M}, we split the sum:
\begin{align*}
\frac{1}{n}\sum_{i=0}^{n-1} \int dz Q_M \left(t_i,x,z \right) \Phi \left(1-t_i,z,y\right) &= \frac{1}{n}\sum_{i\le \frac{n}{2}} \int  dz Q_M \left(t_i,x,z \right) \Phi \left(1-t_i,z,y\right)\\
& + \frac{1}{n}\sum_{i\ge \frac{n}{2}} \int  dz Q_M \left(t_i,x,z \right) \Phi \left(1-t_i,z,y\right) \\
& = S_1(1,x,y) + S_2(1,x,y).
\end{align*}

In $S_1(1,x,y)$, the time parameter is such that when differentiating along $y$, the singularities are negligible. Thus, we derive under the integral, and use bounds \eqref{EST_DERIV_R_M} and \eqref{DerivPhi}, to get a convolution of $\bar{p}^\alpha_K$ functions.
For $S_2(1,x,y)$, we make use of the change of variable $z = y-u$, to get:

$$
 S_2(1,x,y) = \frac{1}{n}\sum_{i\ge \frac{n}{2}} \int  du Q_M \left(t_i,x,y-u \right) \Phi \left(1-t_i,y-u,y\right).
$$

In the stable case, when taking the derivative along $y$ the bounds \eqref{EST_DERIV_R_M} and \eqref{DerivPhiDeux} yield:
\begin{eqnarray*}
\left|\partial_y^\gamma S_2\left(1,x,y\right)\right| &=& \sum_{\eta=0}^\gamma C^\gamma_\eta \frac{1}{n}\sum_{i\ge \frac{n}{2}} \int  du  
\left|\partial_y^{\gamma-\eta}Q_M \left(t_i,x,y-u \right) \right| \left| \partial_y^\eta \Phi \left(1-t_i,y-u,y\right) \right|\\
&\le& C_\gamma \frac{1}{n}\sum_{i\ge \frac{n}{2}} \int  du \bar{p}^\alpha_K(t_i,x,y-u) \left(1+ \frac{1\wedge|u|}{1-t_i} \right)\bar{p}^\alpha_K(1-t_i,y-u,y)\\
&\le& C\bar{p}^\alpha_K(1,x,y)+ \frac{1}{n}\sum_{i\ge \frac{n}{2}} \int  du \bar{p}^\alpha_K(t_i,x,y-u) \left( \frac{1\wedge|u|}{1-t_i} \right)\bar{p}^\alpha_K(1-t_i,y-u,y).
\end{eqnarray*}
Now, recall from definition of $\bar{p}^\alpha_K$, we have either $ \bar{p}^\alpha_K(t_i,x,z) \le C  \bar{p}^\alpha_K(1,x,y)$ or $\frac{1}{1-t_i} \bar{p}^\alpha_K(1-t_i,z,y) \le C \bar{p}^\alpha_K(1,x,y)$, so that:
$$
\frac{1}{n}\sum_{i\ge \frac{n}{2}} \int  du \bar{p}^\alpha_K(t_i,x,y-u) \left( \frac{1\wedge|u|}{1-t_i} \right)\bar{p}^\alpha_K(1-t_i,y-u,y) \le \bar{p}^\alpha_K(1,x,y)\left( 1+ 1\wedge|y-x| \right).
$$
In the gaussian case, the proof is simpler, as the derivative of $\Phi$ is estimated by:
$$
\left| \partial_y^\eta \Phi \left(1-t_i,y-u,y\right) \right| \le \frac{1}{\sqrt{1-t_i}} \bar{p}^\alpha_K(1-t_i,y-u,y),
$$
and we can directly conclude comparing the sum over $\eta$ to a Beta function.

For $R_2$, we can take the derivative in $y$ under the integral in the above expression to get:
\begin{eqnarray*}
\partial_y^\gamma R_2(t,x,y)&=& \frac{1}{(M+1)!} \int_0^1 (1-\tau)^M \partial_y^\gamma \biggl[p^d \otimes_n \Big(\tilde{L}_* - \tilde{L}^* \Big)^{M+1}\tilde{p}^\Delta_\tau \biggr](t,x,y),
\end{eqnarray*}
which is  a term of the same nature as the second part of the expansion \eqref{EXP_DERIV_ DENS}.
In particular, one can show bounds on $\tilde{p}^\Delta_\tau(t,x,y)$, similar to those of Lemma \ref{LemmeDerivDens}.
With these estimates at hand, we may use similar arguments to derive an $\alpha$-stable estimate on $R_2$, for $\alpha \in (0,2]$.
We leave the remaining details to the reader.

\end{proof}

\begin{REM}\label{Rem_Ind_N}
The terms in the expansion \eqref{EXP_DERIV_ DENS} depends on $n$.
As already pointed out in \cite{kona:meno:10} and \cite{kona:mamm:02}, it is possible to make this expansion independent of $n$, using the bounds on the difference between the usual time space convolution $\otimes$ and its discretization $\otimes_n$.
For $M=2$, one derives the expansion:
\begin{eqnarray*}
\partial_y^\gamma(p-p_n)(1,x,y) &=& \frac{1}{2n} \partial_y^\gamma \Big( p \otimes_n (L-\tilde{L}^*)^{2}p^d \Big)(1,x,y)\\
&&-\frac{1}{2n} \partial_y^\gamma \Big( p^d \otimes_n (\tilde{L}_*-\tilde{L}^*)^{2}p_n \Big)(1,x,y) + \frac{1}{n^2}\partial_y^\gamma R(x,y)\\
&=& \frac{1}{2n} \partial_y^\gamma \Big( p \otimes (L-\tilde{L}^*)^{2}p \Big)(1,x,y)\\
&&-\frac{1}{2n} \partial_y^\gamma \Big( p \otimes (\tilde{L}_*-\tilde{L}^*)^{2}p \Big)(1,x,y) + \frac{1}{n^2}\partial_y^\gamma \tilde{R}(x,y)\\
&=&\frac{1}{2n} \partial_y^\gamma \Big( p \otimes (L^2-(\tilde{L}^*)^2)p \Big)(1,x,y)+ \frac{1}{n^2}\partial_y^\gamma \tilde{R}(x,y).
\end{eqnarray*}
In the above expansion, $\partial_y^\gamma \tilde{R}(x,y)$ is a remainder term bounded by some stable density as $\partial_y^\gamma R(x,y)$.
\end{REM}

\begin{COROL} \label{EXP_THETA}
Assume \textbf{[A]} holds.
Recall $m$ denotes the regularity of the coefficients of the SDE \eqref{EqDfSt}.
Let $M\in\N^*$, such that when $\alpha=2$, $0<M \le m/2$, and when $\alpha <2$, we assume $m>d+4$ and $0<M\le m-(d+4)$. 
Then, the following expansion holds:
$$\theta^{*,n}- \theta^* = \frac{C_1}{n} + \cdots + \frac{C_p}{n^{M-1}} + o\left( \frac{1}{n^{M-1}}\right).$$
\end{COROL}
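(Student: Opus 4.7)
The proof plan is to apply Theorem \ref{implic:discret:err:main_result} with parameters $\alpha=1$ and $p = M-1$. I must therefore check hypothesis \textbf{[H-(M-1)]} together with the invertibility of $Dh(\theta^*)$; the convergence $\theta^{*,n} \rightarrow \theta^*$ is already furnished by Proposition \ref{conv:quantile:prop}. Here the parameter $\theta$ is one-dimensional and, as noted in the text,
\[
\frac{d^k}{d\theta^k}\bigl(h - h^n\bigr)(\theta) = \frac{1}{1-\ell}\bigl( \partial_\theta^{k-1} p^{X_1^d}(1,x,\theta) - \partial_\theta^{k-1} p_n^{X_1^{n,d}}(1,x,\theta)\bigr),
\]
so the whole problem reduces to producing an expansion in powers of $1/n$ for the $(k-1)$-th derivative of the marginal density of the last component.

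To obtain such an expansion I would apply Theorem \ref{ExpDerv} with multi-index $\gamma = (0,\dots,0,k-1) \in \mathbb{N}^d$ (admissible whenever $k-1 \le M$), and then integrate the resulting pointwise expansion of $\partial_{y_d}^{k-1}(p-p_n)(1,x,y)$ over $(y_1,\dots,y_{d-1}) \in \mathbb{R}^{d-1}$. The stable/Gaussian majorant \eqref{GAUSSIAN_ESTIMATE} provides a uniform dominating function, so Fubini--dominated convergence yields term-by-term integration and produces the coefficients $\Lambda_j^k(\theta)$ together with a remainder of the correct order. For the case $k=0$ (namely, $h$ itself) one integrates a further time over $(-\infty,\theta]$; the integrability of $\bar{p}_K^\alpha(1,x,\cdot)$ on $\mathbb{R}^d$ makes this harmless.

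A bookkeeping nuisance is that the coefficients in Theorem \ref{ExpDerv} still involve the discrete convolution $\otimes_n$ and therefore depend on $n$. I would handle this as sketched in Remark \ref{Rem_Ind_N}: iteratively replace $\otimes_n$ by the continuous convolution $\otimes$ at each order, absorbing the resulting discrepancy $|\otimes_n - \otimes|$ into the next term of the expansion until it is swallowed by the overall remainder. This converts each $n$-dependent coefficient into an intrinsic $\Lambda_j^k(\theta)$, as demanded by \textbf{[H-(M-1)]}.

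The remaining items of \textbf{[H-(M-1)]} are then immediate. Smoothness $h,h^n \in \mathcal{C}^{M-1}(\mathbb{R},\mathbb{R})$ follows from the regularity of the densities stated in Proposition \ref{exist:dens:prop} combined with the integrability of $\bar{p}_K^\alpha$. Local uniform convergence of $D^l h^n$ to $D^l h$ for $l \in \llbracket 1, M-1 \rrbracket$ is read off from the leading $O(1/n)$ term of the expansion whose constants, being integrals of stable-type densities, are locally bounded in $\theta$. Invertibility of $Dh(\theta^*) = \tfrac{1}{1-\ell}\,p^{X_1^d}(1,x,\theta^*) > 0$ follows from the strict positivity of the density in Proposition \ref{exist:dens:prop}. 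Theorem \ref{implic:discret:err:main_result} then delivers the stated expansion. The main obstacle I anticipate is the twofold uniform integration (first over the $d-1$ transverse components, then over $\theta$ on compacts) combined with the conversion to $n$-independent coefficients; everything else is routine given the density technology already set up in the paper.
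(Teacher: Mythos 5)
Your proposal is correct and follows essentially the same route as the paper: reduce to verifying \textbf{[H-(M-1)]}, apply Theorem~\ref{ExpDerv} with a multi-index concentrated on the last coordinate, make the coefficients $n$-independent via Remark~\ref{Rem_Ind_N}, integrate the pointwise density expansion over the transverse $d-1$ variables (and once more over $(-\infty,\theta]$ for the $l=0$ case) using the $\bar{p}^\alpha_K$ majorant for domination, then read off local uniform convergence and the invertibility of $Dh(\theta^*)$ from positivity of the density, and finally invoke Theorem~\ref{implic:discret:err:main_result} with $\alpha=1$, $p=M-1$. The paper's proof does exactly this, so the plan matches the intended argument.
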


\begin{proof}
We prove that under the assumptions of Corollary \ref{EXP_THETA}, \textbf{[H-(M-1)]} holds.
Let $M\le m/2$ for $\alpha =2$ and $M \le m-(d+4)$ for $\alpha <2$ and let $\gamma \in \N$ with $\gamma \le M-1$.
From Theorem \ref{ExpDerv}, under \textbf{[A]}, expansion \eqref{EXP_DERIV_ DENS} holds up to order $M-1$.
Moreover, from Remark \ref{Rem_Ind_N}, this expansion can be made independent of $n$, namely
\begin{eqnarray}\label{EXP_DERIV_IND_N}
\partial_{y^d}^\gamma p(1,x,y)-\partial_{y^d}^\gamma p_n(1,x,y) = 
\sum_{k=1}^{M-\gamma-1} \frac{1}{n^k} \Gamma_k^\gamma(x,y)+ \frac{\partial_y^\gamma \tilde{R}_n(x,y)}{n^{M-\gamma}}.
\end{eqnarray}
Thus, integrating equation \eqref{EXP_DERIV_IND_N} in the $d-1$ first variables yields for all $M\le m/2$ for $\alpha =2$ and $M \le m-(d+4)$ for $\alpha <2$ and $\gamma \le M-1$:
\begin{equation}\label{EXP_DERIV_MARG}
\partial_{y^d}^\gamma p^{X^{d}_1}(1,x,y^d)-\partial_{y^d}^\gamma p_n^{X^{n,d}_1}(1,x,y^d) = \sum_{k=1}^{M-\gamma-1} \frac{1}{n^k} \bar{\Gamma}_k^\gamma(x,y^d)+ \frac{\partial_{y^d}^\gamma \bar{R}_n^\gamma (x,y^d)}{n^{M-\gamma}}.
\end{equation}

\noindent where we denoted $p^{X^{d}_1}$,  $p_n^{X^{n,d}_1}$ the marginal densities of $X_1^d$ and $X_1^{n,d}$, and 
$$
\bar{\Gamma}_k^\gamma(x,y^d) = \int_{\R^d \times \cdots \times \R^d} \Gamma_k^\gamma(x,y) dy_1 \cdots dy_{d-1} .
$$

The Gaussian bound on the remainder implies $\frac{\partial_{y_d}^l \bar{R}_n(x,y^d)}{n^{M-\gamma}} = \O\left(n^{-(M-\gamma)}\right)$, so that we have for all $M\le m/2$ for $\alpha =2$ and $M \le m-(d+4)$ for $\alpha <2$ and $\gamma \le M-1$:
\begin{equation}\label{dev_dens_marg}
\partial_{y^d}^\gamma p^{X^{d}_1}(1,x,y^d)-\partial_{y^d}^\gamma p_n^{X^{n,d}_1}(1,x,y^d) = \sum_{k=1}^{M-\gamma-1} \frac{1}{n^k} \bar{\Gamma}_k^\gamma(x,y^d)+ o(\frac{1}{n^{M-\gamma-1}}).
\end{equation}

Now, since $h(\theta) -  h^n(\theta) = \mathbb{P}^x (X_1^d \le \theta)  - \mathbb{P}^x (X_1^{n,d} \le \theta) $, taking $\gamma=0$ in \eqref{dev_dens_marg} yields:
\begin{eqnarray*}
h(\theta) -  h^n(\theta) = \int_{-\infty}^{\theta} \left(p^{X^{d}_1}(1,x,y_d)- p_n^{X^{n,d}_1}(1,x,y_d)\right) dy_d
=\sum_{k=1}^{M-1} \frac{\Lambda_k^0(\theta)}{n^k} +o(\frac{1}{n^{M-1}}),
\end{eqnarray*}

\noindent where we denoted
$\Lambda_k^0(\theta)=\int_{-\infty}^{\theta} \bar{\Gamma}_k^0(x,y_d)dy_d $.
Thus, the first assumption in \textbf{[H-(M-1)]} holds.

We now turn to the expansion of the derivatives.
From expansion \eqref{dev_dens_marg} one easily gets:
$$
\partial_\theta (h-h^n)(\theta) = (p^{X^{d}_1}- p_n^{X^{n,d}_1})(1,x,\theta) =  \sum_{k=1}^{M-2} \frac{1}{n^k} \bar{\Gamma}_k^0(x,\theta)+ o(\frac{1}{n^{M-2}})
$$

\noindent and $\forall l \le M-2, \forall (x,\theta)\in \R^d\times \R$
\begin{eqnarray*}
\partial_{\theta}^l h(\theta) - \partial_{\theta}^l h^n(\theta) & = &  \partial_{\theta}^{l-1} p^{X^{d}_1}(1,x,\theta) -\partial_{\theta}^{l-1} p_n^{X^{n,d}_1}(1,x,\theta)  =  \sum_{k=1}^{M-l-1} \frac{1}{n^k}\Lambda_k^l(\theta)+ o(\frac{1}{n^{M-1-l}})
\end{eqnarray*}

\noindent where we denoted for consistency $\Lambda_k^l(\theta)=\bar{\Gamma}_k^{l-1}(x,\theta)$.
Consequently, expansions \eqref{erreur_discretisation} and \eqref{dev_diff_h} holds in $\textbf{[H-(M-1)]}$. 
It remains to check the local uniform convergence and the invertibility of $Dh(\theta^*)$.
For the latter, recall that $Dh(\theta^*)= p^d(1,x,\theta^*)$.
Also, we know that under \textbf{[A]}, stable bounds holds for $p(1,x,y)$ (see e.g. \cite{Aronson:1967} in the gaussian case, and \cite{kolo:00} for the stable case), that is $
 p(1,x,y) \asymp \bar{p}^\alpha_K(1,x,y)$.
Thus, the left hand side of the previous inequality gives that $p(1,x,y)$, and \textit{a fortiori} $p^{X^d}(1,x,y)$,  is never equal to zero. Hence $Dh(\theta^*)$ is invertible.
Finally the local uniform convergence is a consequence of expansion \eqref{EXP_DERIV_ DENS}. 
\end{proof}
\section{Numerical illustration}
To illustrate the method we consider a geometric Brownian motion $(X_t)_{t \in [0,T]}$ with dynamics given by
$$
X_t = x_0 \exp((r-\sigma^2/2)T + \sigma \sqrt{T} W_t), \ t\in [0,T]
$$

\noindent for which the quantile is explicitly known at any level $\ell \in (0,1)$. Indeed a simple computation shows that 
$$
\theta^* = x_0 \exp((r-\sigma^2/2)T + \sigma \sqrt{T} \phi^{-1}(\ell))
$$

\noindent where $\phi$ is the distribution function of the standard normal distribution $\mathcal{N}(0,1)$. Let us note that the assumptions of Section \ref{applic:sec} are not satisfied in this example and that nobody would devise any kind of Monte Carlo simulation in practice since the law of $X_T$ is explicitly known for any time $T$. However the Black-Scholes model and its Euler scheme appears as a natural and often used benchmark to test and evaluate the performance of Monte Carlo methods. We use the following values for the parameters: $x_0 = 100$, $r=0.05$, $\sigma=0.4$, $T=1$, $\ell=0.7$. The reference Black-Scholes quantile is $\theta^*=119.69$. We set $\gamma(p) = \gamma_0/p $ with $\gamma_0=60$.

Let us note that in order to implement the Richardson-Romberg stochastic approximation estimator we need to simulate discretization schemes of the Brownian diffusion with different steps $\Delta_r = T/nr$, $r=1, \cdots, R$. We thus need to simulate \emph{consistent Brownian increments} on intervals of the form $[(k-1)T/(rn), kT/(rn)]$, $r=1, \cdots, R$. The coefficients to compute by induction the Brownian increments from small intervals up to the root interval of length $T/n$ have been computed up to $R=5$ for $\alpha=1$ and up to $R=3$ for $\alpha=1/2$ in \cite{GPag:07}, Section 5.

In order to illustrate the result of Theroem \ref{implic:discret:err:main_result}, we plot in Figure \ref{RRSA} the behaviors of $\sum_{r=1}^R \bold{w}_r \theta^{*,rn}-\theta^*$ for $R=2,3,4$ and $n=2, \cdots, 15$. We estimate $\theta^{*,rn}$ by $\theta^{rn}_M$, with $M=10^6$ samples for $R=2$ and $M=10^8$ samples for $R=3,4$ using the same Brownian motion for each R (see Remark \ref{control:var:rem}). We clearly see that the Richardson-Romberg estimator efficiency increases with $R$ and the method gives satisfying results with $R=3,4$ for small values of $n$. 

\begin{figure}[!ht]
\begin{center}
\includegraphics[width=10cm,height=10cm, keepaspectratio=true]{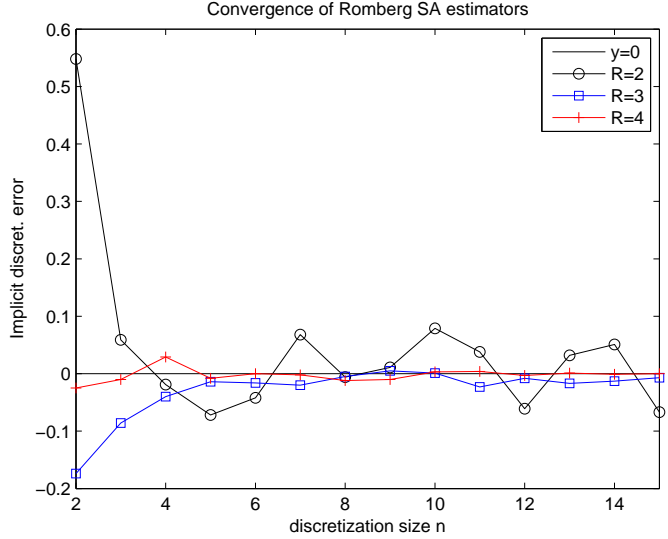}
\caption{Richardson Romberg SA estimators: $\sum_{r=1}^R \bold{w}_r \theta^{*,rn}-\theta^*$ with respect to $n=2,\cdots,15$ for $R=2,3,4$.}
\label{RRSA}
\end{center}
\end{figure}
Let us observe that the asymptotic optimal parameters in Propositions \ref{sub:opt:res} and \ref{optim:prob:crude} depend on the structural parameters: $\alpha$, $|C_R|$, $C(\gamma, \underline{\lambda})$, $\E\left[|H(\theta^*,U)|^2\right]$, $|C_1|$. Let us note that in Proposition \ref{premier_pas} and Theorem \ref{implic:discret:err:main_result} one may show that the constants $|C_R|$ writes $C_R= Dh(\theta^*)^{-1} \tilde{C}_R$. Here one has $Dh(\theta^*) = p(1,x,\theta^*)/(1-\ell)$ so that $|C_R|= |\tilde{C}_R| (1-\ell)/p(1,x,\theta^*)$. We estimate $p(1,x,\theta^*)$ by $p_n(1,x,\theta^{n}_M)\approx (\P_x(X^{n,d}_1 \leq \theta^{n}_M + \varepsilon) - \P_x(X^{n,d}_1 \leq \theta^{n}_M - \varepsilon))/2\varepsilon$ which in turn is approximated by the crude Monte Carlo estimator $(2M)^{-1} \sum_{k=1}^M \textbf{\mbox{1}}_{\left\{(X^{n,d}_1)^{k} \leq \theta^* + \varepsilon\right\}} - \textbf{\mbox{1}}_{\left\{(X^{n,d}_1)^{k} \leq \theta^* - \varepsilon\right\}}$ with $M=1000$, $n=100$, $\varepsilon=0.1$ leading to the value $Dh(\theta^*)=2.56 \times 10^{-2}$. Finally estimating $|\tilde{C_1}|$ for the crude SA estimator and $|\tilde{C}_R|$ for the Richardson-Romberg extrapolation method is a challenging task. Consequently we implement these methods in a blind way setting $|\tilde{C}_R|=1$ for every $R$. We also set $\underline{\lambda}=Dh(\theta^*)$ and $\gamma_0 = 1/\underline{\lambda}$. Note that we have $\E\left[|H(\theta^*,U)|^2\right] = \E\left[|1-(1-\ell)^{-1} \textbf{\mbox{1}}_{\left\{X^{d}_1 \geq \theta^{*} \right\}}|^2\right] = \ell/ (1-\ell)$. The optimal parameters for the Richardson-Romberg extrapolation method are set according to Proposition \ref{sub:opt:res} namely
$$
n(\varepsilon ) = \ceil*{ \left(\frac{2 \alpha R}{\beta}  + 1 \right)^{\frac{1}{\alpha R}}  \mu^{\frac{1}{\alpha R}}_R \varepsilon^{-\frac{1}{\alpha R}} } \ \ \mbox{and} \ \ M(\varepsilon) = \ceil*{ \gamma^{\frac{1}{\beta}}_0 \nu^{\frac{2}{\beta}}_R \left(1+ \frac{\beta}{2\alpha R}\right)^{\frac{2}{\beta}} \varepsilon^{-\frac{2}{\beta}}}. 
$$

The target accuracy $\varepsilon$ for the $L^{1}$-error has been set at $\varepsilon=2^{-p}$, $p=1, \cdots,4$. The $L^{1}$-error is estimated using $400$ runs of the algorithm. The results are summarized in Table \ref{tab:RR} for the Richardson-Romberg extrapolation SA method and in Table \ref{tab:crudeSA} for the crude SA method.\footnote{The computations were performed on a computer with 4 multithreaded(16) octo-core processors
(Intel(R) Xeon(R) CPU E5-4620 @ 2.20GHz).} Note that as expected the $L^{1}$-error is always lower than the specified $\varepsilon$ for our estimators. Using the Richardson-Romberg SA scheme instead of the crude SA method leads to a gain in terms of CPU-time varying from $12$ (for $\varepsilon=5.00 \times 10^{-1}$) to $66$ (for $\varepsilon=6.25 \times 10^{-2}$).
\begin{table}
\centering
\begin{tabular}{ | c| l | c | r || c | c | c| }
\hline
Target accuracy: $\varepsilon$ & $L^{1}$-error & time ($s$) & $R$ & $n$ & $M$ \\
\hline
  $5.00 \times 10^{-1}$  & $3.21 \times 10^{-1}$ & $0.9 \times 10^{1}$ & $2$ & $14$ & $8.69 \times 10^{5}$ \\
  $2.50 \times 10^{-1}$  & $4.80 \times 10^{-2}$ & $5.15 \times 10^{1}$ & $2$ & $20$& $3.48 \times 10^{6}$ \\
  $1.25 \times 10^{-1}$  & $4.32 \times 10^{-2}$ & $1.70 \times 10^{2}$  & $3$ &  $8$ & $1.21 \times 10^{7}$  \\
  $6.25 \times 10^{-2}$ & $3.48 \times 10^{-2}$  & $7.92 \times 10^{2}$ & $3$ & $10$ & $4.85 \times 10^{7}$ \\
  \hline
\end{tabular}
\caption{Richardson-Romberg SA estimators for the quantile at level $\ell$ of a geometric Brownian motion with a target accuracy $\varepsilon=2^{-p}$, $p=1,\cdots,4$.}
 \label{tab:RR}
\end{table}
\begin{table}

\centering
\begin{tabular}{ | c| l | c || r | c | c| }
\hline
Target accuracy: $\varepsilon$ & $L^{1}$-error & time ($s$) & $n$ & $M$ \\
\hline
  $5.00 \times 10^{-1}$  & $2.09 \times 10^{-1}$ & $1,09 \times 10^2$  & $235$ & $1.25 \times 10^{6}$ \\
  $2.50 \times 10^{-1}$  & $3.84 \times 10^{-2}$ & $8.18 \times 10^2$  & $469$& $5.01 \times 10^{6}$ \\
  $1.25 \times 10^{-1}$  & $3.48 \times 10^{-2}$ & $7.09 \times 10^{3}$  & $938$ & $2.00 \times 10^{7}$  \\
  $6.25 \times 10^{-2}$ & $2.91 \times 10^{-2}$  & $5.25 \times 10^{4}$ & $1876$ & $8.01 \times 10^{7}$ \\
  \hline
\end{tabular}
\caption{Crude SA estimators for the quantile at level $\ell$ of a geometric Brownian motion with a target accuracy $\varepsilon=2^{-p}$, $p=1,\cdots,4$.}
\label{tab:crudeSA}
\end{table}
 \section{Technical results}\label{technical:res:sec}
We provide here some useful technical results that are used repeatedly throughout the paper. For a proof the reader may refer to \cite{Frikha2013}.
\begin{LEMME}
\label{stepseq:tech:lemme}
Let $a,b>0$. Suppose that \A{HUA} is satisfied. Let $(\gamma_n)_{n\geq1}$ be a sequence satisfying \A{HS}. If $\gamma(t)=\gamma_0/t$, $t\geq1$, suppose $b \underline{\lambda} \gamma_0>a$. Let $(v_n)_{n\geq1}$ be a non-negative sequence. Then, for some positive constant $C:=C(\underline{\lambda},\gamma)$, one has
$$
\lim\sup_{n} \gamma^{-a}_{n} \sum_{k=1}^{n} \gamma^{1+a}_{k} ||\Pi_{k+1,n}||^b v_k  \leq C \lim\sup_{n} v_n,
$$

\noindent where $\Pi_{k,n}:= \prod_{j=k}^n (I_d-\gamma_j Dh(\theta^*))$, with the convention that $\Pi_{n+1,n}=I_d$.

\end{LEMME}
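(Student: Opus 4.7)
The plan is to combine the exponential decay of $\Pi_{k+1,n}$, inherited from the uniform attractivity assumption \A{HUA}, with an $\varepsilon$-splitting of the sum based on $\ell := \limsup_n v_n$.

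The starting point is the classical spectral bound: since $-Dh(\theta^*)$ is Hurwitz with $\lambda_{\min}((Dh(\theta^*) + Dh(\theta^*)^T)/2) \geq \underline{\lambda}$ (cf. Remark \ref{remark:attractivity}), for every $\eta \in (0,\underline{\lambda})$ there exists $C>0$ (depending on $\eta$) such that for all $1 \leq k \leq n$, one has
$$
\|\Pi_{k,n}\| \leq C \prod_{j=k}^{n}(1 - \eta' \gamma_j) \leq C \exp\Big(-(\underline{\lambda}-\eta) \sum_{j=k}^{n}\gamma_j\Big),
$$
for some $\eta' \in (0,\underline{\lambda}-\eta)$. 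Fix $\varepsilon>0$ and choose $N_0$ such that $v_k \leq \ell + \varepsilon$ for $k \geq N_0$. Split
$$
S_n := \gamma_n^{-a}\sum_{k=1}^n \gamma_k^{1+a}\|\Pi_{k+1,n}\|^b v_k = A_n + B_n,
$$
where $A_n$ collects the $N_0$ first terms and $B_n$ the remaining ones.

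For $A_n$, each $v_k$, $k \leq N_0$, is just a fixed number and it suffices to check that $\gamma_n^{-a}\|\Pi_{k+1,n}\|^b \to 0$ as $n \to \infty$. In the regularly varying regime $\gamma(t) \sim \gamma_0 t^{-\rho}$ with $\rho \in (1/2,1)$, the quantity $\sum_{j=k+1}^n \gamma_j$ behaves like $c(n^{1-\rho}-k^{1-\rho})$ and the exponential decay dominates any polynomial factor $\gamma_n^{-a}$. In the regime $\gamma(t) = \gamma_0/t$, one has $\exp(-(\underline{\lambda}-\eta)\sum_{j=k+1}^n \gamma_j) \leq C(k/n)^{(\underline{\lambda}-\eta)\gamma_0}$, so
$$
\gamma_n^{-a}\|\Pi_{k+1,n}\|^b \leq C \, n^{a - b(\underline{\lambda}-\eta)\gamma_0} k^{b(\underline{\lambda}-\eta)\gamma_0};
$$
since $b\underline{\lambda}\gamma_0 > a$ by assumption, one may choose $\eta$ small enough so that $b(\underline{\lambda}-\eta)\gamma_0 > a$, which forces $A_n \to 0$.

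The heart of the argument is to prove the uniform bound
$$
\sup_{n \geq 1} \gamma_n^{-a}\sum_{k=1}^n \gamma_k^{1+a}\|\Pi_{k+1,n}\|^b \leq C(\underline{\lambda},\gamma),
$$
which will then give $B_n \leq C(\ell+\varepsilon)$. In the case $\gamma_n = \gamma_0/n$, inserting $\|\Pi_{k+1,n}\|^b \leq C (k/n)^{b(\underline{\lambda}-\eta)\gamma_0}$ reduces the sum to $n^{a-b(\underline{\lambda}-\eta)\gamma_0}\sum_{k=1}^n k^{b(\underline{\lambda}-\eta)\gamma_0-1-a}$, which is of order $1$ precisely thanks to $b(\underline{\lambda}-\eta)\gamma_0 > a$ (the sharpness of this sign condition is the reason for the assumption $b\underline{\lambda}\gamma_0 > a$). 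In the regularly varying case, one instead performs a comparison of the discrete sum with the integral $\int_0^1 u^{\rho(1+a)} \exp(-c(1-u^{1-\rho})/\gamma_n^{1-\rho}) \, du$ via the change of variables $k = n u$, and the result follows from Laplace's method around $u=1$, which gives an integral of order $\gamma_n^{a}$ exactly.

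Finally, letting $n \to \infty$ in $S_n = A_n + B_n$ yields $\limsup_n S_n \leq C(\ell+\varepsilon)$, and the conclusion follows by letting $\varepsilon \to 0$. The main technical obstacle is the uniform bound, especially in the critical regime $\gamma_n = \gamma_0/n$, where the estimate is sharp and requires the explicit condition $b\underline{\lambda}\gamma_0>a$; for regularly varying $\gamma$ the decay is super-polynomial and the bound is much more robust.
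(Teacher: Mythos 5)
The paper does not actually supply a proof of this lemma: it is stated with the remark ``For a proof the reader may refer to \cite{Frikha2013}.'' So there is no in-paper proof to compare against. That said, your argument is sound and is precisely the standard route for this kind of Gronwall-type tail estimate: (i) the exponential bound $\|\Pi_{k,n}\|\le C\exp\big(-(\underline{\lambda}-\eta)\sum_{j=k}^n\gamma_j\big)$ coming from the Hurwitz property, (ii) an $\varepsilon$-split at $N_0$, (iii) $A_n\to 0$ because $\gamma_n^{-a}\|\Pi_{k+1,n}\|^b\to 0$ for fixed $k$ in both step regimes, and (iv) the uniform bound on the kernel $\gamma_n^{-a}\sum_{k}\gamma_k^{1+a}\|\Pi_{k+1,n}\|^b$, which is the only substantive step. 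Your treatment of the critical case $\gamma(t)=\gamma_0/t$ is correct and exhibits exactly why the hypothesis $b\underline{\lambda}\gamma_0>a$ is needed: one needs room to pick $\eta>0$ with $b(\underline{\lambda}-\eta)\gamma_0>a$, and the resulting power series $\sum_{k\le n} k^{b(\underline{\lambda}-\eta)\gamma_0-1-a}$ is of order $n^{b(\underline{\lambda}-\eta)\gamma_0-a}$, which cancels the prefactor exactly.

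Two small inaccuracies in the regularly-varying case, neither fatal: the integrand exponent of $u$ should read $u^{-\rho(1+a)}$ (since $\gamma_k^{1+a}\propto k^{-\rho(1+a)}$, not $k^{+\rho(1+a)}$), and the large parameter in the exponential should be $n\gamma_n$ (equivalently $\gamma_n^{-(1-\rho)/\rho}$ up to slowly-varying corrections), not $\gamma_n^{-(1-\rho)}$; indeed $\sum_{j\ge nu}^{n}\gamma_j \sim \frac{n\gamma_n}{1-\rho}\,(1-u^{1-\rho})$ by Karamata. With these corrected the Laplace estimate gives an integral of order $(n\gamma_n)^{-1}$, and after restoring the Jacobian factor $n$ and the factor $\gamma_n^{1+a}$ from the step size, one obtains $\gamma_n^{-a}\sum_k\gamma_k^{1+a}\|\Pi_{k+1,n}\|^b = O(1)$, which is what is claimed. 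Since $n\gamma_n\to\infty$ precisely because $\rho<1$, the Laplace method is justified. Overall the proof is correct as a strategy and the minor algebraic slips in the description of the change of variables do not affect the conclusion.
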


\begin{LEMME}
\label{sstrongerror:tech:lemme}
Let $(\theta^{n}_{p})_{p\geq0}$ be the scheme defined by \eqref{RM}, $\theta^{n}_0$ being independent of the innovation with $\sup_{n\geq1}\E|\theta^{n}_0|^2<+\infty$. Suppose that \A{HUA}, \A{HC1} and \A{HS} hold. Then, for some constant $C>0$, one has:
$$
\forall p \geq 1, \ \ \sup_{n\geq1}\E[|\theta^{n}_p-\theta^{*,n}|^2] \leq C \gamma(p)
$$

\end{LEMME}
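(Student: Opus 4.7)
The plan is to derive a one-step recursion for $a_p := \sup_{n \geq 1} \E[|\theta^n_p - \theta^{*,n}|^2]$ and then to exploit \A{HS} to show $a_p = O(\gamma(p))$.

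First I would set $\Delta^n_p := \theta^n_p - \theta^{*,n}$ and decompose the innovation:
$$H(\theta^n_p, (U^n)^{p+1}) = h^n(\theta^n_p) + \Delta M^n_{p+1},$$
where $\Delta M^n_{p+1}$ is an $\mathcal F_{p+1}$-martingale increment. Squaring $\Delta^n_{p+1} = \Delta^n_p - \gamma_{p+1} h^n(\theta^n_p) - \gamma_{p+1} \Delta M^n_{p+1}$ and conditioning on $\mathcal F_p$ gives
$$\E[|\Delta^n_{p+1}|^2 \mid \mathcal F_p] = |\Delta^n_p|^2 - 2\gamma_{p+1} \langle \Delta^n_p, h^n(\theta^n_p) \rangle + \gamma^2_{p+1} \E[|H(\theta^n_p,(U^n)^{p+1})|^2 \mid \mathcal F_p].$$
The two structural hypotheses then handle the two deterministic terms. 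By \A{HUA}, a first-order Taylor expansion of $h^n$ around $\theta^{*,n}$ combined with $h^n(\theta^{*,n}) = 0$ and the uniform lower bound on the symmetric part of $Dh^n$ yields the uniform mean-reverting bound
$$\langle \theta - \theta^{*,n}, h^n(\theta) \rangle \geq \underline{\lambda}\, |\theta - \theta^{*,n}|^2,$$
while \A{HC1} directly bounds the quadratic variation term by $C(1 + |\Delta^n_p|^2)$. Taking expectations I obtain the master recursion
$$a_{p+1} \leq (1 - 2\underline{\lambda}\gamma_{p+1} + C\gamma^2_{p+1})\, a_p + C \gamma^2_{p+1},$$
which is uniform in $n$ since each ingredient is (and the initial condition $a_0$ is finite by hypothesis on $\theta^n_0$).

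The main step is then to upgrade this recursion to $a_p \leq C \gamma(p)$. I would rescale by setting $u_p := a_p / \gamma_p$ and derive
$$u_{p+1} \leq \frac{\gamma_p}{\gamma_{p+1}}\,(1 - 2\underline{\lambda}\gamma_{p+1} + C\gamma^2_{p+1})\, u_p + C \gamma_{p+1}.$$
Under \A{HS}, an asymptotic expansion of $\gamma_p/\gamma_{p+1}$ is available in both regimes. If $\gamma(p) = \gamma_0/p$ with $2\underline{\lambda}\gamma_0 > 1$, then $\gamma_p/\gamma_{p+1} = 1 + 1/p$ and the multiplicative coefficient in front of $u_p$ becomes $1 - (2\underline{\lambda}\gamma_0 - 1)/p + O(1/p^2)$, which is strictly less than $1$ for large $p$, so a standard discrete Gronwall argument gives $\sup_p u_p < +\infty$. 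If $\gamma$ varies regularly with exponent $(-\rho)$, $\rho \in (1/2, 1)$, then $\gamma_p/\gamma_{p+1} = 1 + \rho/p + o(1/p)$ and the dominant factor is $1 - 2\underline{\lambda}\gamma_{p+1}$, which still makes the coefficient of $u_p$ of the form $1 - c_p$ with $c_p \geq 0$ and $\sum_p c_p = +\infty$; the residual $C\gamma_{p+1}$ being summable against this factor, one again concludes $\sup_p u_p < +\infty$.

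The main technical difficulty is managing the two regimes of \A{HS} uniformly in $n$. The key observation is that all constants entering the estimate ($\underline{\lambda}$ from \A{HUA}, $C$ from \A{HC1}, and the moment bound on $\theta^n_0$) are $n$-independent by assumption, so the same discrete Gronwall estimate produces a constant $C$ that does not depend on $n$. Finally, since the bound $a_p \leq C \gamma(p)$ holds for all $p \geq p_0$ with a uniform $p_0$, and $a_p < +\infty$ for $p < p_0$ (trivially, since $\gamma(p)$ is bounded below on that finite range), enlarging $C$ if necessary yields the conclusion for all $p \geq 1$.
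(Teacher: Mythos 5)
Your proof is correct and follows the standard stochastic-approximation argument that is used in the cited reference \cite{Frikha2013}: derive a one-step recursion for $\E|\theta^n_p-\theta^{*,n}|^2$ via the martingale decomposition, use \A{HUA} (via the integral Taylor formula and $h^n(\theta^{*,n})=0$) to get the uniform strong mean-reverting bound $\langle \theta-\theta^{*,n}, h^n(\theta)\rangle \geq \underline{\lambda}|\theta-\theta^{*,n}|^2$, use \A{HC1} to bound the conditional second moment, and then analyse the resulting recursion $a_{p+1}\leq(1-2\underline{\lambda}\gamma_{p+1}+C\gamma_{p+1}^2)a_p + C\gamma_{p+1}^2$ by rescaling $u_p:=a_p/\gamma_p$ and a discrete Gronwall argument under the two regimes of \A{HS}. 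The only point worth tightening is the asymptotics $\gamma_p/\gamma_{p+1}=1+\rho/p+o(1/p)$ in the regularly-varying regime, which needs the slowly varying factor to be sufficiently smooth; but the structurally relevant fact — that $\gamma_p/\gamma_{p+1}-1=o(\gamma_{p+1})$ when $\rho<1$, so the contraction coefficient is governed by $1-2\underline{\lambda}\gamma_{p+1}$ — is exactly what you invoke and does close the argument.
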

%
%
%
%
%
%
%
%
%
%
%
\providecommand{\bysame}{\leavevmode\hbox to3em{\hrulefill}\thinspace}
\providecommand{\MR}{\relax\ifhmode\unskip\space\fi MR }
\providecommand{\MRhref}[2]{%
  \href{http://www.ams.org/mathscinet-getitem?mr=#1}{#2}
}
\providecommand{\href}[2]{#2}


\bibliographystyle{alpha}
\bibliography{bibli}

\end{document}